\documentclass[11pt]{amsart}
\usepackage{amsmath, amssymb, mathrsfs,graphicx}
\usepackage{mathabx}
\usepackage{bbm} 

\usepackage{tikz}
\usetikzlibrary{patterns}

\newcommand{\executeiffilenewer}[3]{%
\ifnum\pdfstrcmp{\pdffilemoddate{#1}}%
{\pdffilemoddate{#2}}>0%
{\immediate\write18{#3}}\fi%
}
\newcommand{%
\executeiffilenewer{.svg}{.pdf}%
{inkscape -z -D --file=.svg %
--export-pdf=.pdf --export-latex}%
\input{.pdf_tex}%
}[1]{%
\executeiffilenewer{#1.svg}{#1.pdf}%
{inkscape -z -D --file=#1.svg %
--export-pdf=#1.pdf --export-latex}%
\input{#1.pdf_tex}%
}

\newcommand{\abs}[1]{{\left\lvert{#1}\right\rvert}}
\newcommand{\norm}[2][]{{\left\lVert{#2}\right\rVert}_{#1}}
\newcommand{\ang}[1]{{\left\langle{#1}\right\rangle}}
\newcommand{\smallabs}[1]{{\lvert{#1}\rvert}}

\newcommand{\hamvf}{\mathsf{H}}
\renewcommand{\Im}{\operatorname{Im}}
\renewcommand{\Re}{\operatorname{Re}}

\DeclareMathOperator{\supp}{supp}

\newcommand{\CI}{\mathcal{C}^\infty}
\newcommand{\CcI}{\mathcal{C}_c^\infty}

\newcommand{\pa}{{\partial}}
\newcommand{\ep}{{\epsilon}}

\DeclareMathOperator{\id}{I}
\newcommand{\RR}{\mathbb{R}}
\newcommand{\reals}{\mathbb{R}}

\newcommand{\complexes}{\mathbb{C}}
\newcommand{\integers}{\mathbb{Z}}

\renewcommand\Re{\operatorname{Re}}
\newcommand{\module}{\mathcal{M}}
\newcommand{\algebra}{\mathcal{A}}
\newcommand{\hilbert}{\mathcal{H}}
\newcommand{\coiso}{\mathcal{I}}
\newcommand{\tcoiso}{\widetilde{\mathcal{I}}}

\newcommand{\Lap}{\Delta}

\DeclareMathOperator{\WF}{WF}

\newtheorem{assumption}{Assumption}
\newcommand{\flowout}{\mathcal{F}}
\newcommand{\flowoutX}{{}^{X}\!\mathcal{F}}
\newcommand{\Id}{I}

\newcommand{\dom}{\mathcal{D}}

\newcommand{\energy}{\mathcal{E}}
\newcommand{\residual}{\mathcal{R}}
\newcommand{\outgoing}{\mathcal{O}}

\DeclareMathOperator{\Res}{Res}
\newcommand{\dtilde}[1][t]{{\stackrel{D,#1}{\sim}}}
\newcommand{\gtilde}[1][t]{{\stackrel{G,#1}{\sim}}}

\newcommand{\Tbstar}{{}^bT^*}
\newcommand{\Sbstar}{{}^bS^*}
\newcommand{\loc}{\text{loc}}

\newtheorem{theorem}{Theorem}
\newtheorem{lemma}{Lemma}
\newtheorem{proposition}{Proposition}
\newtheorem{corollary}{Corollary}

\theoremstyle{remark}
\newtheorem{definition}{Definition}
\newtheorem{remark}{Remark}

\newcommand{\pd}[1][]{\partial_{#1}}

\newcommand{\Ut}[1][t]{\mathcal{U}(#1)}
\newcommand{\Utzero}[1][t]{\mathcal{U}_0(#1)}
\newcommand{\PD}[1][]{D_{#1}}

\title[Resolvent estimates on conic manifolds]{Resolvent estimates and
  local decay of waves on conic manifolds} 
\author{Dean Baskin and Jared Wunsch} \address{Northwestern University}
\thanks{The authors are grateful to Euan Spence and to Maciej Zworski
  for illuminating discussions.  The first author acknowledges the
  support of NSF postdoctoral fellowship DMS-1103436, and the second
  author was partially supported by NSF grant DMS-1001463.}

\date{\today}

\begin{document}
\maketitle

\begin{abstract}
  We consider manifolds with conic singularites that are isometric to
  $\reals^{n}$ outside a compact set.  Under natural geometric
  assumptions on the cone points, we prove the existence of a logarithmic
  resonance-free region for the cut-off resolvent.  The estimate also
  applies to the exterior domains of non-trapping polygons via a
  doubling process.

  The proof of the resolvent estimate relies on the propagation of
  singularities theorems of Melrose and the second
  author~\cite{Melrose-Wunsch:cone} to establish a ``very weak''
  Huygens' principle, which may be of independent interest.

  As applications of the estimate, we obtain a exponential local energy
  decay and a resonance wave expansion in odd dimensions, as well as a
  lossless local smoothing estimate for the Schr{\"o}dinger equation.
\end{abstract}

\section{Introduction}

In this paper we consider a manifold $X$ of dimension $n$ with conic singularities that is
isometric to $\RR^n$ outside a compact set.  We impose geometric hypotheses
(elucidated in Section~\ref{section:assumptions} as Assumptions~1--3) that \begin{enumerate}
\item The flow
along ``geometric'' geodesics is non-trapping.  (Geometric geodesics
are those that miss the cone points or that are everywhere locally
given by limits of families of geodesics missing the cone points.)
\item 
No three cone points are collinear.
\item
No two cone points are conjugate to each other.
\end{enumerate}
Our main result is as follows (throughout the paper, $\Lap$ denotes
the Laplacian with \emph{positive} spectrum):
\begin{theorem}\label{theorem:resest}
For $\chi \in \CcI(X),$ there exists $\delta>0$ such that the cut-off resolvent
$$
\chi (\Lap-\lambda^2)^{-1}\chi
$$
can be analytically continued from $\Im \lambda >0$ to the region
$$
\Im \lambda >-\delta \log \Re \lambda,\ \Re \lambda >\delta^{-1}
$$
and for some $C,T>0$ enjoys the estimate
$$
\norm[L^{2}\to L^{2}]{\chi (\Lap-\lambda^2)^{-1}\chi} \leq C
\smallabs{\lambda}^{-1} e^{T\smallabs{\Im \lambda}}
$$
in this region.
\end{theorem}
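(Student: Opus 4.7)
My plan is to follow the Vainberg--Tang--Zworski framework, in which a logarithmic-region resolvent estimate is reduced to a smoothing property of the cutoff wave propagator at large times. The geometric input will come from the Melrose--Wunsch propagation theorem at cone points, combined with the three standing hypotheses.

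The first step is to establish the ``very weak Huygens' principle'': there exist $T > 0$ and $s > 0$ such that
$$\norm[L^{2} \to H^{s}]{\chi e^{-it\sqrt{\Lap}} \chi} \leq C, \qquad t \geq T.$$
To prove this I propagate arbitrary $L^{2}$ data supported in $\supp \chi$ and track the wavefront set under the generalized broken bicharacteristic flow. Non-trapping of geometric geodesics ensures that, absent any diffractive interaction with a cone point, every singularity escapes $T^{*}(\supp \chi)$ within a uniform time $T_{0}$; hence any singularity still present in $\supp \chi$ at $t > T_{0}$ must have been diffracted at some cone point. The Melrose--Wunsch theorem supplies a quantitative gain of Sobolev regularity along the diffractive branch, provided one can separate it microlocally from the geometrically-continued branch. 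The no-conjugacy hypothesis ensures that the geometric continuation at each cone point is unique and transverse to the diffracted flowout, so the Melrose--Wunsch gain is genuinely realized. The no-three-collinear hypothesis bounds the length of any chain of purely geometric cone-to-cone segments, so that for $T$ taken large enough every microlocal trajectory returning to $\supp \chi$ must have undergone at least one genuinely diffractive event, and hence carries a fixed amount of regularity gain.

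With the smoothing estimate in hand, the resolvent bound follows from the Laplace-transform representation
$$\chi (\Lap - \lambda^{2})^{-1} \chi = \int_{0}^{\infty} e^{i\lambda t} \chi \, \frac{\sin(t\sqrt{\Lap})}{\sqrt{\Lap}} \, \chi \, dt, \qquad \Im \lambda > 0,$$
split at $t = T$. The integral over $[0,T]$ is entire in $\lambda$, and finite propagation speed together with the $L^{2}$ mapping properties of the wave group produces the factor $\smallabs{\lambda}^{-1} e^{T \smallabs{\Im \lambda}}$. For the integral over $[T,\infty)$ I iterate the very weak Huygens estimate, producing a factor of $\smallabs{\lambda}^{-s}$ per time step of length $T$; this gain compensates the exponential growth incurred by shifting the contour downward, and the standard Vainberg-type arguments then yield both the meromorphic continuation and the claimed operator-norm bound throughout the strip $\Im \lambda > -\delta \log \Re \lambda$, $\Re \lambda > \delta^{-1}$.

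The principal obstacle is the first step. The Melrose--Wunsch theorem gives regularity improvement only along the diffracted front and only conditional on a priori regularity of the incoming wave at the cone point. To chain such gains through multiple cone-point interactions requires careful microlocal bookkeeping: at each passage one must verify transversality of the incoming and outgoing flowouts (this is where no-conjugacy enters), rule out indefinite deferral of the diffractive gain via purely geometric chains (this is where no-three-collinear enters), and sum the contributions of the finitely many diffracted branches without cancellation of the regularity gain. Organizing this propagation-of-singularities argument into a uniform smoothing estimate is the technical heart of the proof.
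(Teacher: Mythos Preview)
Your overall architecture matches the paper's: first prove the very weak Huygens' principle (Theorem~\ref{theorem:Huygens}), then feed it into the Vainberg--Tang--Zworski machinery to obtain the resolvent bound. The second step, while more intricate than your sketch (the paper builds a parametrix $R^\#(\lambda)$ and inverts a perturbation of the identity rather than simply splitting the time integral), is essentially as you indicate.

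The gap is in your account of how the regularity gain at cone points is produced. The Melrose--Wunsch result (Proposition~\ref{proposition:MW}) does \emph{not} give a Sobolev gain merely by microlocally separating the diffractive branch from the geometric one. Its second part requires as a \emph{hypothesis} that the incoming solution satisfy the \emph{nonfocusing condition} relative to $H^s$; only then is the diffracted wave placed in $H^{s-0}$ (and is coisotropic there). For generic data in $\mathcal{E}_r$ there is no reason nonfocusing should hold at any level better than $r$, so a single diffraction yields no improvement.

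The paper's mechanism therefore requires \emph{two} successive strictly diffractive interactions. The first, at $Y_\alpha$, outputs a wave that is \emph{coisotropic} with respect to $\flowout_\alpha$ relative to $H^{s-0}$. The key new ingredient is Proposition~\ref{proposition:nonfocusingholds} (proved via the normal-form Lemmas~\ref{lemma:coisoNF}--\ref{lemma:coisonormalform} on intersecting coisotropics): coisotropic regularity with respect to $\flowout_\alpha$ relative to $H^s$ implies nonfocusing with respect to $\flowout_\beta$ relative to $H^{s+(n-1)/2-0}$, provided $\flowout_\alpha$ and $\flowout_\beta$ intersect transversely. \emph{This} is where Assumption~\ref{assumption:conjugate} enters---not to make the geometric continuation at a single cone point unique, but to guarantee transversality of the flowouts from \emph{distinct} cone points. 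The second diffraction, at $Y_\beta$, then cashes in the nonfocusing hypothesis and realizes the $(n-1)/2-0$ gain. The combinatorics of Lemma~\ref{lemma:awayfromcone} (classifying words by their $G/D$ pattern) ensures that after time $5T_0$ every surviving singularity has undergone such a $DD$ pair, with Assumption~\ref{assumption:collinear} ruling out the patterns that would evade this.

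So the missing idea in your proposal is the nonfocusing/coisotropic duality and the intersecting-coisotropics result that converts one into the other with a half-dimension gain; without it your first step does not go through.
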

As shown by Lax-Phillips \cite{Lax-Phillips1} and Vainberg
\cite{Vainberg} in certain geometric settings and later generalized by
Tang-Zworski \cite{TZ} to ``black-box'' perturbations, if the dimension $n$ is odd, then
Theorem~\ref{theorem:resest} results in a decay estimate for solutions to
the wave equation in such a geometry, and indeed in a full resonance-wave
expansion for solutions to the wave equation.  Let $\dom_s$ denote the
domain of $\Lap^{s/2}$ (see Section~\ref{section:geometry} below) and let
$\sin t\sqrt\Lap/\sqrt\Lap$ be the wave propagator.  Let $\chi$ equal $1$
on the set where $X$ is not isometric to $\RR^n.$
\begin{corollary}\label{corollary:resexp}
Let $n$ be odd.  For all $A>0,$
small $\ep>0,$ $t>0$ sufficiently large,  and $f \in
\dom_1,$
$$
\chi \frac{\sin t\sqrt{\Lap}}{\sqrt{\Lap}} \chi f=  \sum_{\substack{\lambda_j \in \Res(\Lap) \\ \Im \lambda
  >-A}}\sum_{m=0}^{M_j} e^{-it \lambda_j} t^m w_{j,m} + E_A(t) f
$$
where the sum is of resonances of $\Lap,$ i.e.\ over the poles of the
meromorphic continuation of the resolvent, and the $w_{j,m}$ are the
associated resonant states corresponding to $\lambda_j.$
The error satisfies
$$
\norm[\dom_{1}\to L^{2}]{E_A(t)} \leq C_\ep e^{-(A-\ep)t}.
$$

In particular, since the resonances have imaginary part bounded above by a
negative constant, $\chi \frac{\sin t\sqrt{\Lap}}{\sqrt{\Lap}} \chi f$ is
exponentially decaying.
\end{corollary}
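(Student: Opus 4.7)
The plan is to deduce Corollary \ref{corollary:resexp} from Theorem \ref{theorem:resest} by invoking the Tang--Zworski black-box framework \cite{TZ}, whose hypotheses $X$ satisfies: it coincides with $\RR^n$ outside a compact set, the associated wave equation has finite propagation speed, and in odd dimensions $n$ the free resolvent $(\Lap_{\RR^n}-\lambda^2)^{-1}$ extends to an entire family of operators between compactly supported and locally $L^2$ functions. Together with the logarithmic resolvent bound of Theorem \ref{theorem:resest}, these are exactly the ingredients needed for the generalized Lax--Phillips/Vainberg resonance expansion machinery.

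The starting point is a contour integral representation of the cut-off wave operator obtained from the spectral theorem: for $C>0$ large and $f\in\dom_1$,
$$
\chi\, \frac{\sin t\sqrt\Lap}{\sqrt\Lap}\,\chi f \;=\; \frac{1}{2\pi i}\int_{\Im \lambda=C} e^{-it\lambda}\,\chi (\Lap-\lambda^2)^{-1}\chi f\, d\lambda.
$$
I would then deform the contour downward to the horizontal line $\Im\lambda = -A$, picking up a residue at each resonance $\lambda_j$ with $\Im\lambda_j > -A$; by Theorem \ref{theorem:resest} only finitely many such resonances exist, and each residue contributes exactly $\sum_{m=0}^{M_j} e^{-it\lambda_j}t^m w_{j,m}$ to the sum in the statement. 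The integral along the deformed contour is by definition $E_A(t)f$.

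To produce the bound on $E_A(t)$, the high-frequency part of the contour must be handled with care. Theorem \ref{theorem:resest} allows the contour to reach depth $-A$ only once $\Re\lambda > e^{A/\delta}$, so the deformation has to travel around the region where the logarithmic strip is not yet wide enough; one uses the Vainberg finite-speed trick, truncating below a logarithmic threshold, and exploits the bound $\|\chi R(\lambda)\chi\|\leq C|\lambda|^{-1}e^{T|\Im\lambda|}$ combined with $e^{-it\lambda}$ giving a factor $e^{-At}$ on $\Im\lambda=-A$. Convergence of the tail integral comes from converting the $\dom_1$ regularity of $f$ into $\lambda$-decay via the identity $\chi R(\lambda)\chi = -\lambda^{-2}\chi\chi + \lambda^{-2}\chi R(\lambda)\Lap\chi$ and commutators $[\Lap,\chi]$ supported where $X$ is Euclidean, at which point one may freely use the explicit free parametrix on $\RR^n$.

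The main obstacle, and the reason one cannot simply quote \cite{TZ} verbatim, is verifying that the cone-point geometry of $X$ fits into the black-box framework used there: one must check that $\Lap$ defines an admissible black-box operator in spite of the conic singularities (self-adjointness on the Friedrichs domain, spectral localization, compactness of the characteristic cutoffs, and the polynomial resolvent bound on every horizontal line inside the region of Theorem \ref{theorem:resest}). Once this verification is complete, the contour-shift argument outlined above goes through essentially as in Tang--Zworski, and the exponential decay estimate $\|E_A(t)\|_{\dom_1\to L^2}\leq C_\ep e^{-(A-\ep)t}$ follows.
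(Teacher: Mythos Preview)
Your proposal is correct and follows the same route as the paper: the corollary is deduced directly from Theorem~\ref{theorem:resest} by invoking the black-box resonance wave expansion of Tang--Zworski~\cite{TZ}, once one has checked that the conic Laplacian (with its Friedrichs domain) fits into the black-box framework. The paper itself does not reprove the contour-deformation argument you sketch but simply refers the reader to Theorem~1 of \cite{TZ}; the verification of the black-box hypotheses you flag is carried out in Section~\ref{sec:preliminaries}.
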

(We refer the reader to Theorem~1 of \cite{TZ} for details of the resonance
wave expansion.)

Another consequence of our resolvent estimate is a \emph{local smoothing}
estimate without loss for the Schr{\"o}dinger equation.  Local smoothing
estimates were originally established for the Schr{\"o}dinger equation on
$\reals^{n}$ by Sj{\"o}lin~\cite{Sjolin}, Vega~\cite{Vega},
Constantin--Saut~\cite{CS}, Kato--Yajima~\cite{KYajima}, and
Yajima~\cite{Yajima}.  Doi~\cite{Doi} showed that on smooth manifolds the
absence of trapped geodesics is necessary for the local smoothing estimate
to hold without loss.  We show that even in the presence of very weak
trapping due to the diffractive geodesics, the local smoothing estimate holds
without loss.
\begin{corollary}
  \label{corollary:local-smoothing}
  Suppose $u$ satisfies the Schr{\"o}dinger equation on $X$:
  \begin{align*}
    i^{-1}\pa_t u(t,z) + \Lap  u(t,z) &= 0 \\
    u(0,z) &= u_{0}(z)\in L^{2}(X)
  \end{align*}
  Then for all $\chi \in C^{\infty}_{c}(X)$, $u$ satisfies the local smoothing estimate without loss:
  \begin{equation*}
    \int_{0}^{T}\norm[\mathcal{D}^{1/2}]{\chi u(t) }^{2}\, dt \leq
    C_{T} \norm[L^{2}(X)]{u_{0}}^{2}
  \end{equation*}
\end{corollary}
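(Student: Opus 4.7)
My plan is to derive the local smoothing estimate from Theorem~\ref{theorem:resest} via the standard Kato smoothing machinery. First, I would split $u_{0}$ using the spectral projections $\mathbf{1}_{[0,\Lambda^{2}]}(\Lap)$ and $\mathbf{1}_{(\Lambda^{2},\infty)}(\Lap)$ with $\Lambda$ chosen larger than the constant $\delta^{-1}$ of Theorem~\ref{theorem:resest}. The low-frequency contribution requires no deep input: $L^{2}$--conservation of the Schr\"odinger flow combined with the uniform boundedness of $\Lap^{1/4}$ on the range of $\mathbf{1}_{[0,\Lambda^{2}]}(\Lap)$ gives
$$
\int_{0}^{T}\norm[\mathcal{D}^{1/2}]{\chi u^{\mathrm{low}}(t)}^{2}\,dt \leq CT\Lambda^{2}\norm[L^{2}(X)]{u_{0}}^{2}.
$$

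For the high-frequency part I would apply Kato's $H$-smoothness criterion to $A := \chi\Lap^{1/4}\mathbf{1}_{(\Lambda^{2},\infty)}(\Lap)$: it suffices to verify
$$
\sup_{\tau \in \RR,\, \ep>0}\norm{A(\Lap-\tau-i\ep)^{-1}A^{*}} < \infty,
$$
as this implies $\int_{\RR}\norm{A e^{-it\Lap}u_{0}}^{2}\,dt \leq C\norm[L^{2}(X)]{u_{0}}^{2}$ by the standard $TT^{*}$--Plancherel argument. For $\tau \leq \Lambda^{2}$ the spectral support of $A$ lies away from $\tau$, and functional calculus gives a uniform bound directly. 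For $\tau=\lambda^{2}$ with $\lambda>\Lambda$, I would introduce an intermediate cutoff $\widetilde{\chi}\in\CcI(X)$ with $\widetilde{\chi}\chi=\chi$ and write
$$
A(\Lap-\lambda^{2}-i0)^{-1}A^{*} = (\chi\Lap^{1/4}\widetilde{\chi})\bigl(\widetilde{\chi}(\Lap-\lambda^{2}-i0)^{-1}\widetilde{\chi}\bigr)(\widetilde{\chi}\Lap^{1/4}\chi) + R_{\lambda},
$$
where $R_{\lambda}$ collects commutator remainders. Theorem~\ref{theorem:resest} gives that the middle factor is $O(\lambda^{-1})$, while on the high-frequency subspace the outer factors behave like $O(\lambda^{1/2})$, yielding an overall $O(1)$ bound.

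The principal technical obstacle is controlling the remainder $R_{\lambda}$: since $\Lap^{1/4}$ is nonlocal and $X$ carries conic singularities, one cannot naively commute compactly supported cutoffs through fractional powers of $\Lap$. I would handle this either via a pseudodifferential or heat-semigroup representation of $\Lap^{1/4}$ adapted to the conic geometry, or by a more direct argument in which the commutator is estimated through the resolvent bound applied to slightly enlarged cutoffs, so that $R_{\lambda}$ is of lower order in $\lambda$ and can be absorbed. A secondary issue is verifying the absence of real-axis poles of the cutoff resolvent in the bounded spectral window $(0,\Lambda^{2}]$, but under the non-trapping hypotheses of Section~\ref{section:assumptions} this should follow from standard absence-of-embedded-eigenvalue arguments together with the meromorphic continuation of the resolvent.
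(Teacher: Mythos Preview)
Your approach is exactly the one the paper invokes: the paper gives no argument of its own and simply cites Burq's derivation of local smoothing from a high-energy cutoff resolvent bound, which is the Kato $H$-smoothness machinery you describe. So there is no divergence in strategy.

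That said, the ``principal technical obstacle'' you identify is largely self-inflicted. With your choice $A=\chi\Lap^{1/4}\mathbf{1}_{(\Lambda^2,\infty)}(\Lap)$, the three operators $\Lap^{1/4}$, the spectral projector, and $(\Lap-\tau-i\ep)^{-1}$ all commute with one another, so
\[
A(\Lap-\tau-i\ep)^{-1}A^{*}=\chi\,\bigl[\Lap^{1/2}\mathbf{1}_{(\Lambda^{2},\infty)}(\Lap)(\Lap-\tau-i\ep)^{-1}\bigr]\,\chi
\]
with no intermediate cutoff $\widetilde\chi$ and hence no remainder $R_{\lambda}$. Cleaner still is to verify Kato's criterion in its spectral-measure form: since $\Lap^{1/2}E'(\tau)=\tau^{1/2}E'(\tau)$ and $\mathbf{1}_{(\Lambda^{2},\infty)}(\Lap)E'(\tau)=\mathbf{1}_{\tau>\Lambda^{2}}E'(\tau)$, one gets
\[
\norm{A\,E'(\tau)\,A^{*}}=\tau^{1/2}\mathbf{1}_{\tau>\Lambda^{2}}\norm{\chi E'(\tau)\chi}\leq C\tau^{1/2}\cdot\tau^{-1/2}=O(1)
\]
directly from Theorem~\ref{theorem:resest}. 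The only genuine commutator issue is the passage from $\norm{\chi\Lap^{1/4}u}$ (which Kato delivers) to $\norm{\Lap^{1/4}\chi u}=\norm{\chi u}_{\dom_{1/2}}$ (which the corollary claims). For this it suffices that $[\Lap^{1/4},\chi]$ be bounded on $L^{2}$, and that follows from the Balakrishnan formula $\Lap^{1/4}=c\int_{0}^{\infty}\mu^{-3/4}\Lap(\Lap+\mu)^{-1}\,d\mu$ together with the facts that $[\Lap,\chi]$ is a first-order differential operator with compactly supported coefficients and that $\norm{(\Lap+\mu)^{-1}}_{L^{2}\to\dom_{1}}\leq C\mu^{-1/2}$; the resulting integral converges. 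Your low-frequency concern is legitimate and is handled, as you say, by the standard absence of embedded eigenvalues for compactly supported perturbations of $\RR^{n}$.
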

This result follows directly from our Theorem~\ref{theorem:resest} by an
argument of Burq~\cite{Burq:smoothing}.

Another application of the resolvent estimate of
Theorem~\ref{theorem:resest} is to the damped wave equation.  Although
we do not pursue it here, under suitable convexity assumptions (e.g.,
if no geodesic passing through the perturbed region re-enters it; see
Datchev--Vasy~\cite{DV1,DV2} for more general conditions), it is
possible to obtain decay estimates for the damped wave equation on
conic manifolds when the only undamped geodesics are diffractive ones.
This relies on a gluing construction of Datchev--Vasy to obtain a
suitable resolvent estimate and on the recent work of Christianson,
Schenck, Vasy and the second author~\cite{CSVW} to yield the estimate.

In addition to applying to manifolds with cone points, our results
also apply to the more elementary setting of certain exterior domains
to polygons in the plane.  Let $\Omega\subset \reals^{2}$ be a compact
region with piecewise linear boundary.  We further suppose that the
complement $\reals^{2}\setminus \Omega$ is connected, that no three
vertices of $\overline{\Omega}$ are collinear, and that
$\reals^{2}\setminus \Omega$ is non-trapping, in the sense that all
billiard trajectories not passing through the vertices of
$\overline{\Omega}$ escape to infinity.\footnote{In fact we require a
  slightly different condition.  We ask that all billiard trajectories
  that are \emph{locally} approximable by trajectories missing the
  vertices escape to infinity.  This is not quite the same condition
  but is generically equivalent.}  Figure~\ref{fig:polygon-example}
illustrates an example of such an exterior domain.  For this class of
domains, the analogue of Theorem~\ref{theorem:resest} holds.
\begin{figure}[htp]
  \centering
  \begin{tikzpicture}
    \filldraw[pattern=north west lines, pattern color = red] (-1,-1)
    -- (-3,-2) -- (-3,1) -- (-1,1) -- (-1,-1);
    \filldraw[pattern=north west lines, pattern color = red] (1,0) --
    (4, 1) -- (3, -2) -- (1,0);
    \draw[dashed] (-1,0) -- (1,0);
    \draw[dashed, ->] (-1,0) -- (-0.5,0);
    \draw[dashed, ->] (1,0) -- (0.5,0);
  \end{tikzpicture}
  \caption{An example of a domain to which
    Corollary~\ref{corollary:polygon} applies.  The dashed line
    represents a trapped diffractive orbit.}
  \label{fig:polygon-example}
\end{figure}
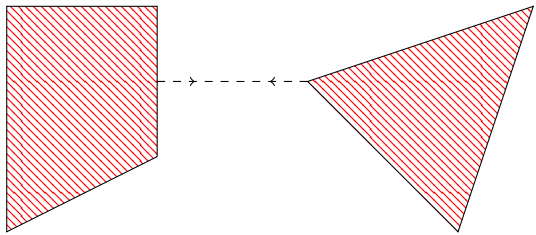
\begin{corollary}
  \label{corollary:polygon}
  If $X = \reals^{2}\setminus \Omega$ is the exterior of a
  non-trapping polygon with no three vertices collinear and 
  $\Lap$ is the Dirichlet or Neumann extension of the Laplacian
  on $X$, then the result of Theorem~\ref{theorem:resest} holds for
  the resolvent on $X$.  
\end{corollary}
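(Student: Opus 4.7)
The strategy is to reduce to Theorem~\ref{theorem:resest} via a doubling construction. I would first form $\tilde X$, the double of $X=\RR^{2}\setminus\Omega$ across its polygonal boundary, by gluing two isometric copies of $\overline{X}$ along $\pa\Omega$. Away from vertices the boundary is smooth, so the doubled surface is smooth and flat there; at each vertex of interior angle $\alpha$ one obtains a flat conic singularity of total cone angle $2(2\pi-\alpha)$. The resulting surface $\tilde X$ is flat away from the cone points and has two isometric Euclidean ends, each isometric to the complement of a large disk in $\RR^{2}$.

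Next I would relate the resolvent on $X$ to that on $\tilde X$. The sheet-exchanging involution $\sigma:\tilde X\to\tilde X$ is an isometry commuting with $\Lap_{\tilde X}$; functions symmetric under $\sigma$ restrict to Neumann data on $X$, while antisymmetric ones restrict to Dirichlet data. Hence the cut-off Neumann (respectively Dirichlet) resolvent on $X$ is obtained by extending $\chi$ to a symmetric (antisymmetric) cutoff on $\tilde X$, applying the cut-off resolvent on $\tilde X$, and restricting. It therefore suffices to establish the conclusion of Theorem~\ref{theorem:resest} on $\tilde X$.

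I would then verify the three geometric assumptions for $\tilde X$. Geodesics missing the cone points are either straight lines on a single sheet or are unfolded billiard trajectories on $X$, and the geometric geodesics are limits of such. The non-trapping hypothesis on the polygon, understood in the sense of the footnote, is thus exactly Assumption~1 on $\tilde X$. The no-three-collinear-vertices hypothesis yields Assumption~2 immediately, since the cone points of $\tilde X$ are in canonical bijection with the vertices of $\Omega$ and collinearity is the same in either picture. Finally, because $\tilde X$ is flat off the cone set, geodesics joining two cone points are straight segments and carry no conjugate points, so Assumption~3 holds automatically.

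The main obstacle after the reduction is that Theorem~\ref{theorem:resest} is stated for a manifold with a \emph{single} Euclidean end, whereas $\tilde X$ has two. I would handle this by observing that the proof of the theorem localizes: the outgoing/complex-scaling arguments at infinity may be carried out separately in each end without interaction, while the serious content — the propagation-of-singularities analysis of \cite{Melrose-Wunsch:cone} near the cone points and the "very weak Huygens' principle" derived from it — is intrinsically local on the singular set and is insensitive to the number of ends. No new estimate is required, only a bookkeeping extension of the Euclidean-end arguments to the two-ended setting.
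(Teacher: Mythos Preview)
Your approach is correct and closely parallels the paper's, with one organizational difference worth noting. Both arguments double $X$ across $\pa\Omega$ to obtain a flat conic surface $\tilde X$ with two Euclidean ends and then use the sheet-swap involution (method of images) to relate the Dirichlet/Neumann problems on $X$ to the Friedrichs problem on $\tilde X$. The paper, however, carries out the transfer at the level of the \emph{weak Huygens principle} (Theorem~\ref{theorem:Huygens}): it observes that the proof of Theorem~\ref{theorem:Huygens} is propagation-of-singularities and hence goes through verbatim on the two-ended $\tilde X$, pulls the conclusion back to $X$ by summing/differencing over sheets, and then runs the black-box Vainberg argument (Proposition~\ref{prop:vainberg}) directly on the exterior domain $X$. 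You instead transfer at the level of the resolvent, which requires running the Vainberg machinery on the two-ended $\tilde X$. Both routes work; the paper's has the minor advantage that the black-box framework of Section~\ref{sec:preliminaries} applies to $X$ as written (one Euclidean end outside a compact set), whereas yours needs the easy-but-not-literally-stated extension of that framework to several ends. One small slip: the cutoff $\chi$ is always extended \emph{symmetrically} to $\tilde X$; it is the data that is extended symmetrically or antisymmetrically according to the boundary condition.
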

The proof of Corollary~\ref{corollary:polygon} relies on reducing the
problem to one on a surface with conic singularites.  Indeed, such an
exterior domain can be \emph{doubled} by gluing together two copies of
it across the common boundary; this results in a manifold with cone
points, corresponding to the vertices of the initial polygonal domain,
and with two ends each isometric to $\RR^2.$ Solutions to the wave
equation in this ``doubled'' manifold are closely related to solutions
to the Dirichlet or Neumann problem on the original exterior domain
via the method of images. Our results hold for the exterior problem to
such non-trapping polygons as well, although this entails some mild
complication in the proof (the introduction of ``black-box''
methods)---see Section~\ref{sec:exter-polyg-doma} below.  In
particular, our result affirmatively answers a conjecture of
Chandler-Wilde, Graham, Langdon, and Spence~\cite{CWGLS:2012}.  In the
case when the obstacle is star-shaped, we remark that the exponential
energy decay is a consequence of the classical technique of Morawetz
estimates (see e.g. Lemma 3.5 of \cite{CWM})); we believe that the
estimate for general non-trapping polygons is new, however.

We note that stronger estimates than those of Theorem~\ref{theorem:resest}
are known to hold in the case of a non-trapping metric or even an
appropriately non-trapping ``black box'' perturbation
such as a smooth non-trapping obstacle  (see \cite{Lax-Phillips1},
\cite{Vainberg}, \cite{TZ}): in these cases there are finitely many
resonances above \emph{any} logarithmic curve $\Im \lambda>-N \log \Re
\lambda.$ That the result here is likely to be sharp can be seen from the
explicit computation of Burq \cite{Burq:coins}, who shows that in the case
of obstacle scattering by two strictly convex analytic obstacles in
$\RR^2,$ one of which has a corner, the resonances (poles of the analytic
continuation of the resolvent $(\Lap-\lambda^2)^{-1}$) are located along
curves of the form $\Im \lambda =-C \log \Re \lambda.$ Burq's setting is
not of course exactly that of manifolds with cone points, but is
suggestively close to that of polygonal domains discussed above.  (Similar
logarithmic strings of resonance poles also appear in
Zworski~\cite{Zworski4} where they are shown to arise from finite order
singularities of a one-dimensional potential, substantiating heuristics
from Regge \cite{Regge:Analytic}.)

By contrast, what seems the weakest trapping possible in the setting of
smooth manifolds, a single closed hyperbolic geodesic, is known in certain
settings to produce strings of resonances along lines of constant imaginary
part \cite{Colin-Parisse}, and hence yields an analytic continuation to a
smaller region than that shown here, which does not permit a resonance wave
expansion in the strong sense of Corollary~\ref{corollary:resexp} except in
very special cases \cite{CZ}.

The fact that the estimates demonstrated here are weaker, by only a very
small margin, than those for non-trapping situations, reflects that fact
that cone points induce a kind of ``weak trapping:'' there exist geodesics
connecting every pair of cone points, and concatenation of such geodesics
starting and ending at the same cone point should be considered a
legitimate geodesic curve in a conic geometry.  In particular, such
concatenations of geodesics are known (generically) to propagate
singularities of the wave equation on exact cones by results of
Cheeger-Taylor \cite{Cheeger-Taylor1, Cheeger-Taylor2}; this reflects the
\emph{diffraction} of singularities by the cone point.  Melrose and the
second author \cite{Melrose-Wunsch:cone} subsequently showed that on any
manifold with conic singularities, the propagation of singularities is
limited to geodesics entering and leaving a given cone point at the same
time (``diffractive propagation'').  It was further shown in
\cite{Melrose-Wunsch:cone} that the fundamental solution of the wave group
with initial pole near a cone point was \emph{smoother} along generic
geodesics emerging from the cone point than along those that are
approximable by geodesics emanating from the initial pole and missing the
cone point; this ``smoothing effect'' in fact holds for any solution that satisfies
an appropriate \emph{nonfocusing} condition with respect to the cone point
in question (see Section~\ref{section:MW} below).  Thus,
colloquially, \cite{Melrose-Wunsch:cone} showed that ``diffracted
singularities are smoother than geometrically propagated singularities.''
It also showed that the spherical wavefront of diffracted singularities is
a conormal wave.  It is the smoothing property and the conormality that
play an essential role in the proof of Theorem~\ref{theorem:resest}.  The
proof proceeds via another result which may be of independent interest, a
theorem on the \emph{weak non-trapping of singularities} for manifolds with
cone points.  In the following theorem, $U(t)$ denotes the wave group, and
$\energy_r$ denotes the Sobolev space of energy data $\dom_r\oplus \dom_{r-1}.$
\begin{theorem}\label{theorem:Huygens}
Let $\chi \in \CcI(X).$   For any $s \in \RR,$ there exists $T_s\gg 0$
such that whenever $t>T_s,$
$$
\chi U(t) \chi: \energy_r\to \energy_{r+s}
$$
for all $r.$
\end{theorem}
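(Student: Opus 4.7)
The plan is to combine the diffractive improvement theorem of Melrose--Wunsch~\cite{Melrose-Wunsch:cone} with the geometric non-trapping hypothesis (Assumption~1) in an iterative argument. The core heuristic is: every time a singularity of finite Sobolev order encounters a cone point, its \emph{diffracted} spherical wave is smoother than the incoming wave by a fixed amount $\alpha > 0$, while its \emph{geometrically propagated} component continues along a geodesic which, by Assumption~1, must leave $\supp \chi$ within a uniform time $T_0$.

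Given $f\in\energy_r$ supported in $\supp\chi$, I would track the Sobolev wavefront set of $U(t)f$. By propagation of singularities on conic manifolds, this wavefront set is contained in the union of forward flow-outs along all admissible broken bicharacteristics that diffract at each cone-point encounter. Applying \cite{Melrose-Wunsch:cone} microlocally at each encounter decomposes the outgoing wavefront into (i) a \emph{geometric} piece continuing straight through the cone point at the same microlocal Sobolev order as the incoming wave, and (ii) a \emph{diffractive} piece supported on the outgoing spherical wave from the cone point, whose microlocal order is improved by $\alpha$ provided the incoming wave satisfies a suitable nonfocusing condition.

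For $t>T_0$, the non-trapping assumption forces every purely geometric contribution out of $\supp\chi$; hence the part of $U(t)f$ microlocalized over $\supp\chi$ is entirely diffractive and therefore lies in $\energy_{r+\alpha}$. Iterating on successive time intervals of length $T_0$, for $t>N T_0$ any singularity remaining in $\supp\chi$ must have undergone at least $N$ diffractions, producing a net microlocal gain of $N\alpha$. Setting $N=\lceil s/\alpha\rceil$ and $T_s=NT_0$ then yields $\chi U(t)\chi:\energy_r\to\energy_{r+s}$ for all $r$ (the argument is uniform in $r$ since the diffractive gain is a shift in microlocal regularity).

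The main obstacle, and the reason Assumptions~2 and~3 are needed, is to propagate the nonfocusing condition through successive diffractions so that each cone-point encounter genuinely contributes $\alpha$. Assumption~2 (no three cone points collinear) prevents a diffracted spherical wave from encountering a second cone point along a direction that also passes through a third, which would destroy the conormal-wave structure of the diffracted front established in \cite{Melrose-Wunsch:cone}. Assumption~3 (no two cone points conjugate) ensures that the spherical wave from one cone point has no focal point at another cone point, preserving the nonfocusing hypothesis required for the next application of the Melrose--Wunsch improvement. Careful bookkeeping of the iterated diffractions---using these geometric conditions to keep the conormality of each diffracted front intact at each new encounter---then upgrades the $N$-fold microlocal regularity gain to the claimed mapping property on the energy spaces.
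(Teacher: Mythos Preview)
Your core heuristic---that each diffractive interaction with a cone point improves Sobolev regularity by a fixed $\alpha>0$---is not what the Melrose--Wunsch theorem provides, and this is where the argument breaks. Part~(2) of Proposition~\ref{proposition:MW} yields a regularity gain at a diffractively-related outgoing point only under a \emph{nonfocusing} hypothesis relative to some $H^{s'}$ with $s'>r$; for arbitrary data $f\in\energy_r$ the only nonfocusing available at the first cone point is the trivial one relative to $H^r$ itself, so the first diffraction gives no Sobolev improvement whatsoever. Your sentence ``hence the part of $U(t)f$ microlocalized over $\supp\chi$ is entirely diffractive and therefore lies in $\energy_{r+\alpha}$'' is thus unjustified, and the iteration never starts.

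What the first diffraction \emph{does} produce---and this is the missing mechanism---is \emph{coisotropic regularity} with respect to the flowout $\flowout_\alpha$, still only relative to $H^{r-0}$. The paper's key new ingredient (Proposition~\ref{proposition:nonfocusingholds}, proved via a normal form for pairs of intersecting coisotropics) is that coisotropic regularity with respect to $\flowout_\alpha$ implies nonfocusing with respect to $\flowout_\beta$ relative to $H^{r+(n-1)/2-0}$, precisely when the isotropic foliations are transverse---which is exactly the nonconjugacy Assumption~\ref{assumption:conjugate}. Only then does the \emph{second} diffraction yield the $(n-1)/2-0$ gain. So the unit of improvement is a pair of successive strictly-diffractive interactions (a ``$DD$'' word in the paper's combinatorics), not a single one; and Assumption~\ref{assumption:collinear} enters not to protect conormal structure as you suggest, but to exclude words with a geometric interaction in the middle (patterns such as $DGD$), since such a configuration would place three cone points on a common geometric geodesic.
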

We recall that Huygens' Principle, valid in odd dimensional Euclidean
space, says that $\chi U(t)\chi$ is eventually identically zero.  More
generally, in even dimensional Euclidean space or indeed in any
``non-trapping'' metric in which all geodesics escape to infinity,
$\chi U(t) \chi$ eventually has a Schwartz kernel in $\mathcal{C}^\infty$.  Our
Theorem~\ref{theorem:Huygens} is weaker yet: here the cut-off wave kernel is as
smooth as one likes, after a sufficiently long time.

Resolvent estimates similar to ours have been previously demonstrated by
Duyckaerts \cite{Duyckaerts} for operators of the form $\Lap+V$ where
$\Lap$ is the Euclidean Laplacian and $V$ has multiple inverse-square
singularities: these singularities are analytically similar to (albeit
geometrically simpler than) cone points.

\section{Geometric set-up}\label{section:geometry}
The basic material in this section on conic geometry comes from
\cite{Melrose-Wunsch:cone} while the more detailed discussion of the
global geometry of geodesics is taken from \cite{Wunsch:Poisson}.

Let $X$ be a noncompact manifold with boundary, $K$ a compact subset
of $X$, and let $g$ be a Riemannian metric on $X^\circ$ such that
$X\backslash K$ is isometric to the exterior of a Euclidean ball
$\RR^n\backslash \overline{B^n(0,R_0)}$ and such that $g$ has conic
singularities at the boundary of $X$:
$$
g=dx^2+ x^2 h(x,dx,y,dy);
$$
here $g$ is assumed to be nondegenerate over $X^\circ$ and $h |_{\pa X}$
induces a metric on $\pa X.$ We let $Y_\alpha,$ $\alpha=1,\dots N$ denote
the components of $\pa X;$ we will refer to these components in what
follows as \emph{cone points}, as each boundary component is a single point
when viewed in terms of metric geometry.

We further let
$$
M=\RR\times X
$$
denote our space-time manifold.

We recall from Theorem~1.2
of \cite{Melrose-Wunsch:cone} that by judicious choice of coordinates
$x,y$ on a collar neighborhood of $\pa X,$ we may reduce $g$ to the
normal form
\begin{equation}\label{semiproduct}
g=dx^2+ x^2 h(x,y,dy),
\end{equation}
where $h$ is now a family (in $x$) of metrics on $Y.$  Then the
curves $y=\text{const.}$ are geodesics, with $x$ the length
parameter.  Indeed, the curves of this form are the \emph{only} geodesics
reaching $\pa X$ and they foliate a neighborhood of $\pa X.$  We let
$\flowoutX^s_\alpha$ denote the collection of the continuations of
forward and backward bicharacteristics in $T^*X^\circ$ which reach the
boundary component $Y_\alpha$ in time $\abs{t}\leq s$ (with $\flowout$
denoting ``flowout'' of the cone point $Y_\alpha$).  Thus for small $s,$ in
canonical coordinates $\xi,\eta$ dual to $x,y,$
$$
\flowoutX^s_\alpha = U\cap \{x\in (0,s), y \in Y_\alpha, \xi\in \RR, \eta=0\}
$$
where $U$ is a neighborhood of the single boundary component
$Y_\alpha$ containing a component of $x<s.$ 
We further refer to points in $\flowoutX_\alpha^s$ as \emph{incoming}
or \emph{outgoing} with respect to the cone point according to whether
they reach the boundary at positive or negative time respectively
under the flow (this separates the manifold into components).  We will
also be concerned with the corresponding flowout sets in space-time.
Letting $\Sigma$ denote the characteristic set of $\Box = D_{t}^{2} -
\Lap$ on $T^*M^\circ$ we define
$$
\flowout^s_\alpha=\{(t , \tau, z,\zeta) \in \Sigma:\
(z,\zeta) \in \flowoutX^s_\alpha\}\subset T^*M^\circ.
$$
 As discussed in \cite{MVW:edges}
(where the notation $\flowout$ was first used) the manifolds
$\flowoutX_\alpha^s,$ $\flowout_\alpha^s$ are \emph{coisotropic} conic
submanifolds of $T^*X^\circ$, resp.\ $T^*M^\circ.$ 

We let $\dom_s$ denote the domain of the $s/2$ power of the
Friedrichs extension of the Laplacian on $\mathcal{C}_c^\infty
(X^\circ).$  Note that this agrees with the ordinary Sobolev space
$H^s$ away from the cone points (and was characterized in
\cite[Section 3]{Melrose-Wunsch:cone} in terms of the scale of \emph{weighted
b-Sobolev spaces}).  Let $$\energy_s=\dom_s\oplus \dom_{s-1}$$ denote the
corresponding space of Cauchy data for the wave equation, and let
$$
U(t) =\exp it \begin{pmatrix} 0 & \Id \\ \Lap & 0\end{pmatrix}
$$
denote the wave propagator, hence
$$
U(t)\colon \energy_s \to \energy_s
$$
for each $s \in \RR.$
We will frequently need to deal with error terms that are residual in the
scale of space $\energy_s,$ so we define
$$
\residual=\big\{ R: \energy_{-\infty} \to \energy_{+\infty,c}\big\}
$$
with the additional $c$ subscript denoting compact support in $X$.  In
dealing with wave equation solutions as functions in spacetime, it is
convenient to think of them lying in the Hilbert space
$$
L^2([0, \widetilde T]; \energy_s)
$$
with $\widetilde T\gg T_s$ taken large enough to encompass all time
intervals under consideration.  We thus denote this space
$$
L^2\, \energy_s
$$
for brevity.  We recall that solutions to the wave equation in $L^2\,
\energy_s$ have unique restrictions to fixed-time data lying in $\energy_s,$
and will use this fact freely in what follows.

For convenience, we will equip the cosphere bundle $S^*X$ with a
Riemannian metric inducing a distance function, denoted $d(\bullet,
\bullet).$ 

\subsection{Geometric and diffractive geodesics}\label{subsection:bichars}

We now recall the different notions of ``geometric'' and
``diffractive'' bicharacteristic which enter into the propagation of
singularities on manifolds with cone points.

\begin{definition}
A \emph{diffractive geodesic} on $X$ is a union of a finite number of
closed, oriented geodesic segments $\gamma_1, \dots, \gamma_N$ in $X$
such that all end points except possibly the initial point in $\gamma_1$ and the
final point of $\gamma_N$ lie in $Y=\pa X$, and $\gamma_i$ ends at the same
boundary component at which $\gamma_{i+1}$ begins, for $i=1, \dots, N-1$.

A \emph{geometric geodesic} is a diffractive geodesic such that in
addition, the final point of $\gamma_i$ and the initial point of
$\gamma_{i+1}$ are connected by a geodesic of length $\pi$ in a boundary
component $Y_\alpha$
(w.r.t.\ the metric $h_0=h\rvert_{Y_\alpha}$) for $i=1,\dots, N-1$.
\end{definition}

The proof the following proposition was sketched in
\cite{Melrose-Wunsch:cone}, and yields the equivalence of the above
definition of ``geometric geodesic'' with the more casual one used in the
introduction above:
\begin{proposition} The geometric geodesics
are those that are \emph{locally} realizable as limits of families
of geodesic in $X^\circ$ as they approach a given boundary component.
\end{proposition}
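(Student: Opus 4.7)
The plan is to reduce the proposition to a purely local analysis at a single boundary component. Since a diffractive geodesic is by definition a concatenation of smooth segments meeting at cone points, and being ``locally realizable as a limit of families of geodesics in $X^\circ$'' also concerns one passage at a time, it suffices to prove the following local statement in the normal form $g = dx^2 + x^2 h(x,y,dy)$ on a collar of $Y_\alpha$: a pair $(y_-, y_+)\in Y_\alpha\times Y_\alpha$ arises as the pair of limiting endpoints of a family of $X^\circ$-geodesics passing through $\{x\leq \ep\}$ with closest-approach distance $r_0$ tending to $0$ if and only if $y_-$ and $y_+$ are joined by a geodesic of length $\pi$ in $(Y_\alpha, h_0)$, where $h_0 = h|_{x=0}$.

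The first step is a reduction to the exact cone $g_0 = dx^2 + x^2 h_0(y,dy)$. The dual metric Hamiltonian near $Y_\alpha$ is $p(x,y,\xi,\eta) = \xi^2 + x^{-2} h^{ij}(x,y)\eta_i\eta_j$. The rescaling $x = r_0 \td x$, $\eta = r_0 \td\eta$ (with $\xi$ unchanged) is an exact symmetry of the leading-order Hamiltonian and converts the flow on $\{x\leq \ep\}$ into a flow on the region $\{1 \leq \td x \leq \ep/r_0\}$ in rescaled phase space, whose generator differs from the exact-cone Hamiltonian by terms of size $O(r_0)$ uniformly on compact subsets. Since the rescaled parameter time of a single passage is uniformly bounded as $r_0\to 0$, standard continuous dependence of ODE solutions on parameters then reduces the limit $r_0\to 0$ to the corresponding question for $g_0$ alone.

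The second step is the direct analysis on $g_0$. The warped-product structure gives the conservation law $\smallabs{\eta}_{h_0} = \text{const}$ along the Hamiltonian flow, and any geodesic of $g_0$ not through the tip lies in a two-dimensional totally geodesic subcone over a single unit-speed $h_0$-geodesic in $Y_\alpha$. Such a subcone is locally flat away from the tip, and in flat coordinates the geodesic becomes an ordinary straight line at distance $r_0$ from the origin, which sweeps out exactly $\pi$ radians of angular measure. Translating back through the isometry, the $h_0$-arclength swept on $Y_\alpha$ during a single passage equals $\pi$ independently of $r_0$, so letting $r_0\to 0$ identifies $y_\pm$ as the endpoints of an $h_0$-geodesic of length $\pi$. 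For the converse, given such an $h_0$-geodesic the same straight-line construction in the corresponding subcone produces, for each $r_0>0$, a geodesic of $g_0$ with the desired limiting endpoints, and the first step perturbs these to genuine geodesics of $g$ with the same limit.

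The main obstacle is the control of the passage through the near-cone region, which occupies unbounded affine parameter time as $r_0\to 0$. Naive continuous dependence of ODE solutions would be unable to control the accumulated effect of the $O(x)$ perturbation of $h$ away from $h_0$ over this unbounded interval. The rescaling in the first step is the key device that converts the problem to a flow on a region of bounded diameter with uniformly $O(r_0)$ perturbation of the exact-cone flow, to which continuous dependence then applies in the usual way.
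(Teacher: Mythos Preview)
The paper does not actually prove this proposition; it only records that a sketch appears in \cite{Melrose-Wunsch:cone}. So there is no proof in the paper to compare against, and your argument has to stand on its own.

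Your overall architecture is the right one and matches the standard approach: localize to a single passage near one boundary component, rescale to compare with the exact cone $dx^2+x^2h_0$, and observe that on the exact cone a trajectory lies in a flat two-dimensional totally geodesic subcone over an $h_0$-geodesic, where a straight line at distance $r_0$ from the tip sweeps out exactly $\pi$ of $h_0$-arclength independently of $r_0$. That part is fine, as is the converse construction.

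The gap is in the perturbation step, where your write-up contradicts itself. After the rescaling $x=r_0\td x$, $\eta=r_0\td\eta$ (together with the time rescaling $t=r_0 s$ needed to keep the vector field bounded---note your map is not symplectic), the collar $\{x\le\ep\}$ becomes $\{\td x\le\ep/r_0\}$, which you write down correctly but then describe as a ``region of bounded diameter''; likewise the $s$-time for a full passage is of order $\ep/r_0\to\infty$, not ``uniformly bounded'' as you assert. Gronwall-type continuous dependence over an unbounded time interval with $O(r_0)$ perturbation gives nothing, so the argument as stated does not close. What actually works is a two-zone split: on any fixed $\{\td x\le R\}$ the rescaled flow is an $O(r_0)$ perturbation of the exact-cone flow over a bounded $s$-interval, so continuous dependence applies there; on $\{\td x\ge R\}$ the trajectory is nearly radial for both the exact and perturbed flows, and a direct estimate (using $\smallabs{\eta}\approx r_0$, $\smallabs{dy/dt}\lesssim r_0/x^2$, $\smallabs{dx/dt}\approx 1$) shows the angular sweep contributed by this tail is $O(1/R)$ uniformly in $r_0$. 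Making this decomposition explicit---or, equivalently, writing the total angular sweep as an integral in $\td x$ and estimating the tail---is what is missing.
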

\begin{figure}
\includegraphics[scale=0.2,angle=90]{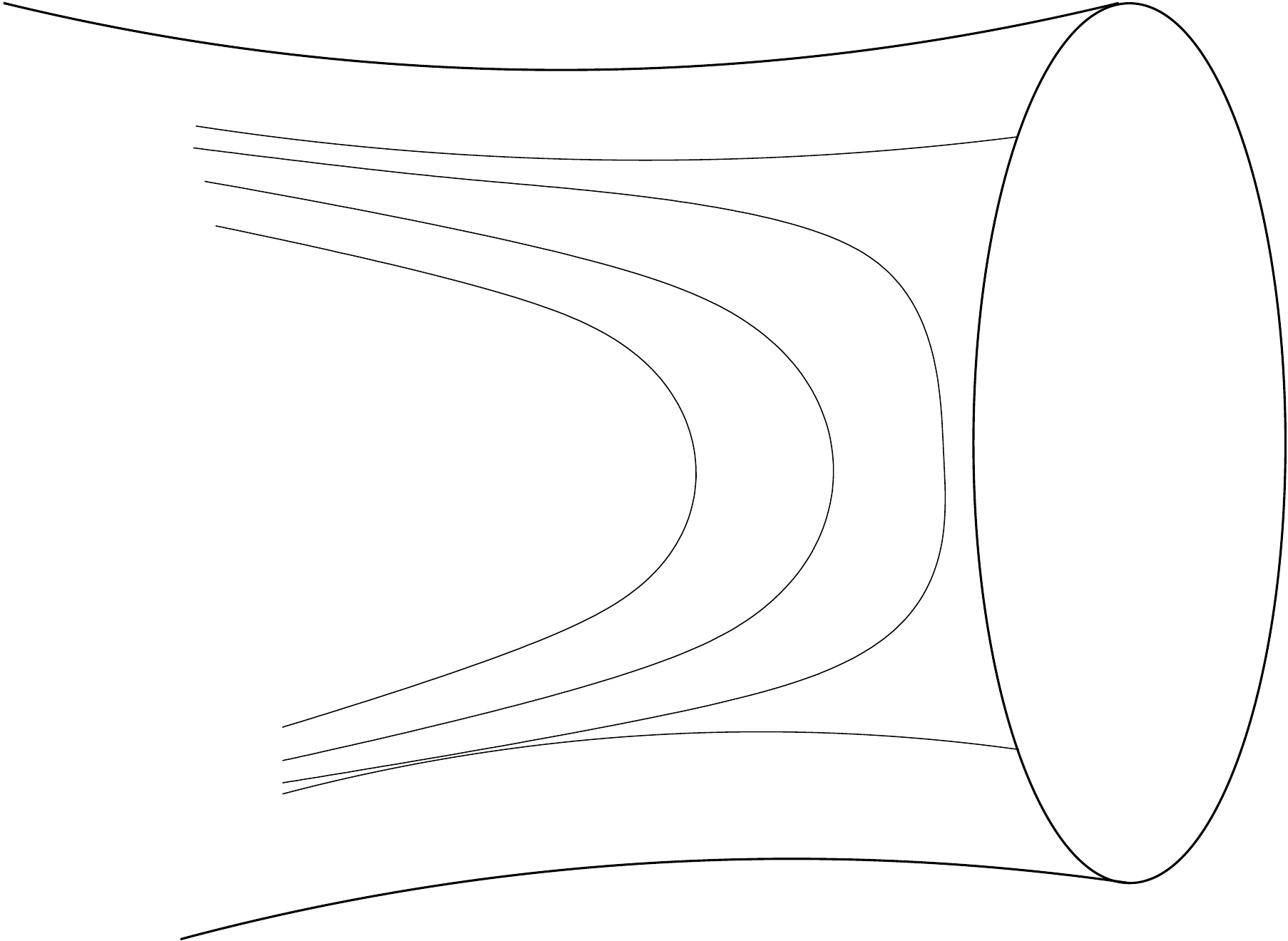}
\caption{A family of geodesics missing the cone point (i.e., boundary
  component), limiting to a pair of geodesics entering and leaving it
  normally together with a geodesic of length $\pi$ in the boundary
  connecting the two.}
\end{figure}
\begin{remark}\label{remark:nonapproximable}
  We note that while every geometric geodesic is \emph{locally}
  approximable by smooth geodesics in $X^\circ,$ a geodesic undergoing
  multiple interactions with cone points may not be \emph{globally}
  approximable in this sense.  Figure~\ref{figure:nonapproximable} shows
  such a situation.  It is most easily interpreted as showing a domain with
  boundary given by the three slits; geodesics then reflect off smooth
  parts of the boundary, and geometric geodesics either pass straight
  through the end of the slits or reflect specularly as if the slits
  continued.  Then the vertical line, while it can be uniformly approximated by
  broken geodesics as shown, cannot be approximated globally: any
  approximating geodesic would have to reflect off one or another of the
  slits.  To place this example in the context of the current paper rather
  than that of domains with boundary, we should instead interpret the
  picture as showing one sheet of a two-sheeted ramified cover of $\RR^2,$
  with the slits representing branch cuts.  This makes the ends of the
  slits into cone points, with the link of each cone point a circle of
  circumference $4\pi.$ In this situation, any unbroken approximating
  geodesic would have to move onto the other sheet of the cover by passing
  through one of the slits, hence could not globally approximate the line
  shown, which remains on a single sheet of the cover.

  This very simple example has three collinear cone points, which we are
  ruling out by hypothesis; however one can make other examples involving
  only two interactions with cone points, and this is permitted by our
  geometric hypotheses.  For this reason Assumption~\ref{assumption:escape}
  is formulated so as to cover geometric geodesics explicitly, rather than
  just as a uniform statement on geodesics in $X^\circ$ (which might be
  preferable).  Ruling out propagation along non-approximable geometric
  geodesics, if indeed true, would likely require somewhat delicate
  second-microlocal arguments.
\begin{figure}
\centering
\def\svgwidth{2in}
\executeiffilenewer{nonapproximable.svg}{nonapproximable.pdf}%
{inkscape -z -D --file=nonapproximable.svg %
--export-pdf=nonapproximable.pdf --export-latex}%
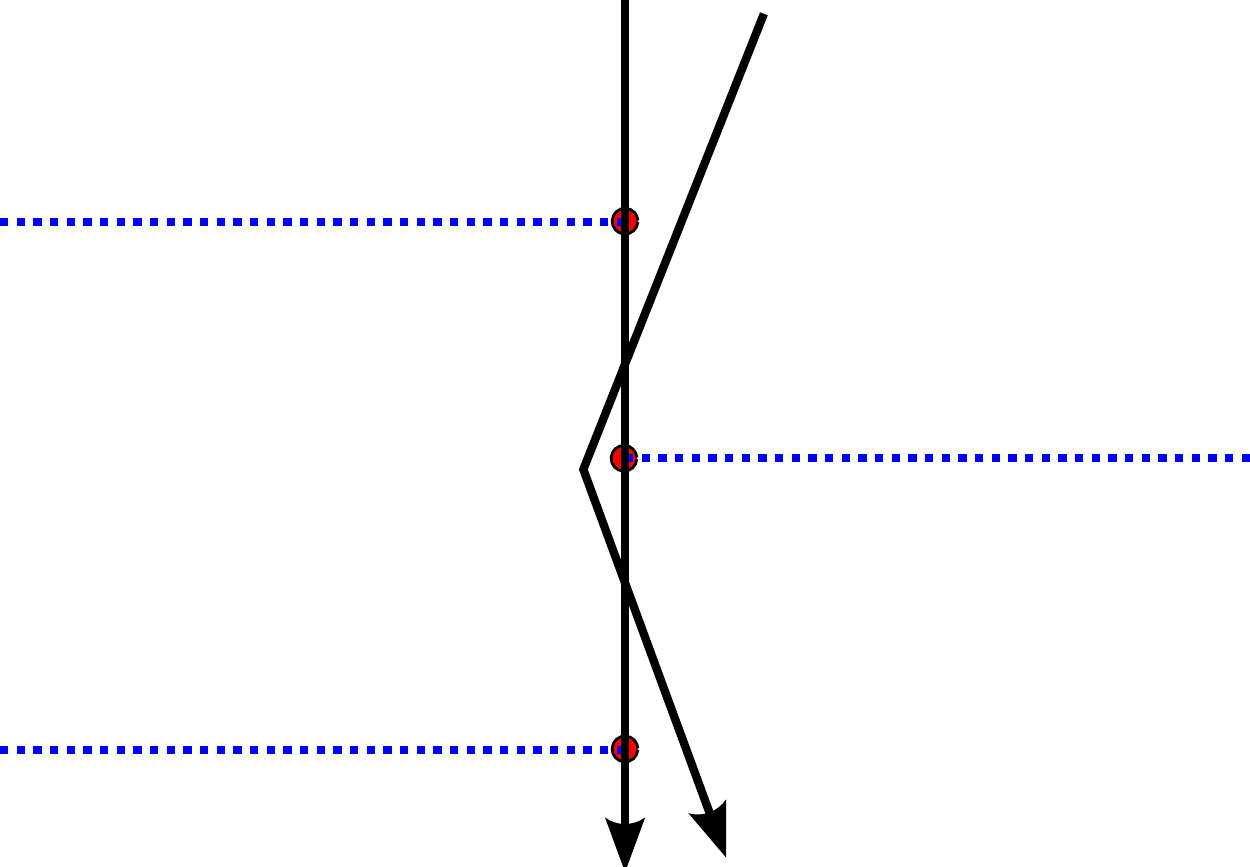%

\caption{A non-approximable geometric geodesic in the
  triply-slit-plane (or its branched cover), with an approximating
  broken geodesic in $X^\circ.$}\label{figure:nonapproximable}
\end{figure}

\end{remark}

As we will need to understand the lift of the geodesic flow to the
cotangent bundle, it is helpful to see how this can be accomplished
uniformly up to $\pa X$ (although in this paper, microlocal considerations
will only arise over $X^\circ,$ which is a considerable simplification).
We let $\Tbstar X$ denote the b-cotangent bundle of $X$, i.e.\ the dual of
the bundle whose sections are smooth vector fields tangent to $\pa X.$   Let $\Sbstar X$ denote the
corresponding sphere bundle.  Let $\xi \, dx/x+ \eta \cdot dy$ denote the
canonical one-form on $\Tbstar X$. (We refer the reader to
Chapter 2 of \cite{Melrose:APS} for a further explanation of ``b-geometry,''
of which we only use the rudiments here.)

Let $K_g$ be the Hamilton vector field (with respect to the symplectic
form $d\left( \xi\, dx/x + \eta \cdot dy\right)$) for
$g/2=(\xi^2+h(x,y,\eta))/(2x^2)$, the symbol of $\Lap/2$ on $\Tbstar
X;$ note that $K_g$ is merely the geodesic spray in $\Tbstar X$ with
velocity $\sqrt g.$ It is convenient to rescale this vector field so
that it is both tangent to the boundary of $X$ and homogeneous of
degree zero in the fibers. Near a boundary component $Y_\alpha$, for a
metric in the reduced form \eqref{semiproduct}, we have (see
\cite{Melrose-Wunsch:cone})
\begin{equation}
K_g=x^{-2} \bigg( H_{Y_\alpha}(x) + \big(\xi^2+h(x,y,\eta)+ \frac x2 \frac{\pa
h}{\pa x}\big) \pa_\xi +\xi x \pa _x \bigg),
\label{spray}
\end{equation}
where $H_{Y_\alpha}(x)$ is the geodesic spray in $Y_\alpha$ with respect to the
family of metrics $h(x, \cdot)$.  Hence the desired rescaling is
$$
Z=\frac{x}{\sqrt g} K_g.
$$ (Note here that $g$ refers to the \emph{metric function} on the
cotangent bundle and not the determinant of the metric tensor.)  By the
homogeneity of $Z,$ if we radially compactify the fibers of the
cotangent bundle and identify $\Sbstar X$ with the ``sphere at
infinity'' then $Z$ is tangent to $\Sbstar X,$ and may be restricted
to it.  Henceforth, then, we let $Z$ denote the restriction of
$(x/\sqrt g) K_g$ to the \emph{compact} manifold $\Sbstar X$ on which
the coordinates $\xi, \eta$ have been replaced by the (redundant)
coordinates $$(\bar\xi, \bar\eta)=\left(\xi/\sqrt{\xi^2+h(\eta)},
  \eta/\sqrt{\xi^2+h(\eta)}\right).$$ $Z$ vanishes only at certain
points $x=\bar\eta=0$ over $\pa X$, hence the closures of maximally
extended integral curves of this vector field can only begin and end
over $\pa X$.  Since $Z$ is tangent to the boundary, such integral
curves either lie entirely over $\pa X$ or lie over $\pa X$ only at
their limit points.  Interior and boundary integral curves can meet
only at limit points in $\{x=\bar\eta=0\} \subset \Sbstar X.$

It is helpful in studying the integral curves of $Z$ to introduce the
following way of measuring their lengths: Let $\gamma$ be an integral curve
of $Z$ over $X^\circ$.  Let $k$ denote a Riemannian metric on $\Sbstar
X^\circ$ such that $k(Z,Z)=1$.  Let
\begin{equation}
\omega= x k(\cdot, Z) \in \Omega^1(\Sbstar X).
\label{omega}
\end{equation}
Then
$$
\int_\gamma \omega= \int_\gamma x k(d\gamma/ds, Z)\, ds =\int_\gamma \frac x{\sqrt g}
k(K_g, Z)\, ds=\int_\gamma ds =\text{length}(\gamma) 
$$
where $s$ parametrizes $\gamma$ as an integral curve of $K_g/\sqrt{g}$, the
unit speed geodesic flow.  With this motivation in mind, we now define, for
each $t\in\RR_+$, two relations in $\Sbstar X$, a ``geometric'' and a
``diffractive'' relation.  These correspond to the two different
possibilities for geodesic flow through the boundary.
\begin{definition}\label{defn:relations}
Let $p, q \in \Sbstar X$.  We write
$$
p \gtilde q
$$
if there exists a \emph{continuous,} piecewise smooth curve $\gamma: [0,1] \to
\Sbstar X$ with $\gamma(0)=p$, $\gamma(1)=q$, such that $[0,1]$ can be
decomposed into a finite union of closed subintervals $I_j$, intersecting at
their endpoints, where
\begin{enumerate}
\item on each $I_j^\circ$, $\gamma$ is a
(reparametrized) positively oriented integral curve of $Z$ in $\Sbstar X,$
\item
On successive intervals $I_j$ and $I_{j+1}$, interior and boundary curves
alternate,
\item
$\int_\gamma \omega =t$, with $\omega$ as defined
in \eqref{omega}.
\end{enumerate}

We write
$$
p \dtilde q
$$
if there exists a piecewise smooth (not necessarily continuous) curve
$\gamma: [0,1] \to \Sbstar X$ with $\gamma(0)=p$, $\gamma(1)=q$, such that
$[0,1]$ can be decomposed into a finite union of closed subintervals $I_j$,
intersecting at their endpoints, where
\begin{enumerate}
\item on each $I_j^\circ$, $\gamma$ is a
(reparametrized) positively oriented integral curve of $Z$ in $\Sbstar X^\circ,$
\item
the final point of $\gamma$ on $I_j$ and the initial point of $\gamma$ on
$I_{j+1}$ lie over the same component $Y_\alpha$ of $\pa X,$
\item
$\int_\gamma
\omega =t$.
\end{enumerate}
\end{definition}

Integral curves of $Z$ over $X^\circ$ are lifts of geodesics in $X^\circ,$ and it
follows from \eqref{spray} that the maximally extended integral curves of
$Z$ in $\Sbstar_{\pa X} X$ are lifts of geodesics of length $\pi$ in $\pa
X$ (see \cite{Melrose-Wunsch:cone} for details), hence:
\begin{proposition}
$p\gtilde q$ iff $p$ and $q$ are connected by a (lifted) geometric geodesic of
length $t$.

$p\dtilde q$ iff $p$ and $q$ are connected by a (lifted) diffractive geodesic of
length $t$.
\end{proposition}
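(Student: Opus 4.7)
\emph{Plan.} This is essentially a translation between the Hamiltonian description of the flow on $\Sbstar X$ and the direct geometric description of diffractive/geometric geodesics. The plan rests on two facts already recorded in the preceding discussion: interior integral curves of $Z$ (over $X^\circ$) project to unit-speed geodesics in $X^\circ$, with the form $\omega$ measuring arclength as computed just before Definition~\ref{defn:relations}; and maximally extended integral curves of $Z$ in $\Sbstar_{\pa X} X$ project to length-$\pi$ geodesics of the induced metric $h_0$ on the relevant component $Y_\alpha$. Both follow from inspection of \eqref{spray} and are proved in \cite{Melrose-Wunsch:cone}, so I would invoke rather than redo them.

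For the diffractive equivalence, I would take a curve $\gamma\colon[0,1]\to\Sbstar X$ witnessing $p\dtilde q$ with partition $\{I_j\}$ and restrict it to each $I_j$. Since any integral curve of $Z$ either lies wholly in $\Sbstar X^\circ$ or meets $\pa X$ only at limit points, the projection $\pi\circ\gamma|_{I_j}$ is a smooth unit-speed geodesic segment $\gamma_j$ in $X^\circ$ whose endpoints may lie in $\pa X$. Condition (2) of Definition~\ref{defn:relations} for $\dtilde$ says that the terminal point of $\gamma_j$ and the initial point of $\gamma_{j+1}$ lie over the same component $Y_\alpha$; this is precisely the defining property of a diffractive geodesic. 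Condition (3) together with the arclength formula for $\omega$ gives $\sum\mathrm{length}(\gamma_j)=t$. Conversely, any diffractive geodesic lifts to such a $\gamma$ by taking the unit-speed parametrization on each smooth segment, with a trivial jump across the cone point between consecutive segments (which costs no $\omega$-length since $x\equiv0$ on $\Sbstar_{\pa X}X$).

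For the geometric equivalence, the extra ingredients are the continuity of $\gamma$ and the alternation of interior and boundary pieces. Continuity forces the meeting point of an interior and a boundary integral curve to lie in $\{x=\bar\eta=0\}\subset\Sbstar X$, since the text identifies this as the only locus where interior and boundary integral curves can meet; these points correspond to normal incidence at $\pa X$, consistent with the fact that geodesics reaching $\pa X$ are the coordinate curves $y=\mathrm{const}$. The alternation condition then forces each boundary portion of $\gamma$ to run between two zeros of $Z$ in $\Sbstar_{\pa X} X$, making it a maximally extended boundary integral curve; by the second preparatory fact, its projection is a length-$\pi$ geodesic in $(Y_\alpha,h_0)$. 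This is exactly the condition added in passing from a diffractive to a geometric geodesic. The length equation is again $\int_\gamma\omega=t$, with boundary pieces contributing nothing because $x\equiv0$ there, in agreement with the convention that the length of a geometric geodesic is the sum of the lengths of its $X$-segments. The converse lifts a geometric geodesic by unit-speed parametrization of the interior segments together with the $Z$-parametrization of the length-$\pi$ boundary connectors, producing a continuous $\gamma$ by construction.

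No step in this bookkeeping is a genuine obstacle given the two preparatory facts; the only substantive calculation is the length-$\pi$ statement for maximally extended boundary integral curves, which is imported from \cite{Melrose-Wunsch:cone} via the leading behavior of \eqref{spray} at $x=0$.
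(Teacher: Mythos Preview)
Your proposal is correct and follows exactly the approach the paper takes: the paper's entire argument is the single sentence preceding the proposition, which records that interior integral curves of $Z$ lift geodesics in $X^\circ$ and that maximally extended boundary integral curves lift length-$\pi$ geodesics in $\pa X$ (citing \cite{Melrose-Wunsch:cone}), and then simply says ``hence.'' Your write-up carries out in detail the translation that the paper leaves implicit, and the details you supply---the vanishing of $\omega$ on boundary pieces, the identification of meeting points at $\{x=\bar\eta=0\}$ forcing boundary segments to be maximally extended---are all sound.
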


A very important feature of these equivalence relations, proved in
\cite{Wunsch:Poisson}, is the following:
\begin{proposition}\emph{(\cite{Wunsch:Poisson}, Prop.~4)}\label{proposition:closed}
The sets $\{(p,q,t): p\gtilde q\}$ and $\{(p,q,t): p\dtilde q\}$ are closed
subsets of $\Sbstar X \times \Sbstar X \times \RR_+$.
\label{prop:closed}
\end{proposition}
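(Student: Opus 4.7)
The natural approach is to prove sequential closedness directly: given a sequence $(p_n, q_n, t_n) \to (p, q, t)$ in $\Sbstar X \times \Sbstar X \times \RR_+$ with $p_n \gtilde[t_n] q_n$ (respectively $\dtilde[t_n]$), witnessed by curves $\gamma_n$, I want to extract a subsequence whose limit witnesses $p \gtilde[t] q$ (resp.\ $\dtilde[t]$). The first reduction is to localize to a compact region: since $t_n$ is bounded and the $\omega$-length of $\gamma_n$ equals $t_n$, and the endpoints $p_n, q_n$ live in a bounded piece of $\Sbstar X$, the curves $\gamma_n$ all lie inside a fixed compact subset $K \subset \Sbstar X$.

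The key combinatorial step is to bound the number $M_n$ of pieces $I_j$ in the decomposition of $\gamma_n$ uniformly in $n$. Near each cone point $Y_\alpha$ the normal form \eqref{semiproduct} shows that the only geodesics reaching $\pa X$ are the radial curves $y=\mathrm{const}$, which foliate a neighborhood. Fixing disjoint collar neighborhoods of the finitely many $Y_\alpha$'s and using that the cone points are separated by positive geometric distance, there is $\epsilon > 0$ such that any maximal interior segment of an integral curve of $Z$ in $\Sbstar X^\circ$ starting and ending at $\pa X$ has $\omega$-length $\geq \epsilon$, while any boundary segment has $\omega$-length $\leq \pi$. Consequently $M_n \leq C(t+1)$, and by passing to a subsequence I may assume $M_n \equiv M$ and that the alternation/labelling pattern (interior vs.\ boundary, and the index $\alpha$ of the boundary component containing each boundary endpoint) is constant in $n$.

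Next, parametrize $\gamma_n$ by its segment lengths $t_n^{(0)}, \dots, t_n^{(M)}$ (with $\sum_j t_n^{(j)} = t_n$) together with the initial point of each segment. All these data lie in compact sets, so along a further subsequence the lengths converge to $t^{(j)} \geq 0$ with $\sum t^{(j)} = t$, and the initial points converge in $\Sbstar X$. By continuous dependence of the flow of $Z$ on initial conditions (both in the interior, where $Z$ is smooth, and on $\pa X$ where $Z$ is again smooth and tangent), each segment of $\gamma_n$ converges uniformly to an integral curve of $Z$ of the prescribed length on the prescribed stratum. Moreover, the common boundary component $Y_\alpha$ at the join of successive boundary-hitting segments is preserved in the limit (as the label is constant along the subsequence), so items (1)--(3) of the diffractive relation pass directly to the limit.

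The only genuine subtlety, which is the main obstacle, is the geometric case when some $t^{(j)} = 0$ in the limit, so that a boundary or interior subsegment degenerates to a point and the interior/boundary alternation might fail; the continuity requirement of the geometric relation must also be verified. When a boundary segment collapses the two adjacent interior segments meet at a point of $\Sbstar_{\pa X}X$ with matching incoming/outgoing directions in the sense of \eqref{spray}, and must be combined into one integral curve of $Z$ entering and immediately leaving the same cone point, which is again a legal piece. When an interior segment collapses, the two adjacent boundary integral curves at the same cone point $Y_\alpha$ share an endpoint and tangent and therefore, by the fact (recalled before Definition~\ref{defn:relations}) that maximally extended boundary integral curves of $Z$ are lifts of geodesics of length $\pi$ in $Y_\alpha$, concatenate to a single reparametrized boundary integral curve of $Z$. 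After these finitely many collapses are performed, the resulting limiting curve satisfies all three clauses of the geometric (resp.\ diffractive) definition, completing the proof.
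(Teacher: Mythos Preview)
The paper does not prove this proposition; it is simply quoted, with attribution, as Proposition~4 of \cite{Wunsch:Poisson}. There is therefore no proof in the present paper against which to compare your argument.

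Your compactness outline is the natural approach and is essentially correct, but note one confusion and one simplification. First, since $\omega = x\,k(\cdot, Z)$ vanishes identically on $\{x=0\}$, boundary segments have $\omega$-length $0$, not ``$\leq\pi$''; the quantity bounded by $\pi$ is the $h_0$-arclength of the underlying geodesic in $Y_\alpha$. This does not harm your bound on $M_n$ (which really only uses the positive lower bound on interior segments), but it does mean that boundary segments must be parametrized by $Y_\alpha$-arclength rather than by $\omega$-length when you pass to the limit. Second, your collapse analysis is more elaborate than necessary: an interior segment with both endpoints on $\pa X$ has $\omega$-length $\geq\epsilon$ and so cannot collapse, while a boundary segment always joins an incoming stationary point $(\bar\xi=-1,\bar\eta=0)$ to an outgoing one $(\bar\xi=+1,\bar\eta=0)$ and is therefore never a single point. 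Hence only the first and last pieces of $\gamma_n$ can degenerate, and those are handled simply by absorbing them into the adjacent piece.
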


\subsection{Propagation and diffraction of singularities}\label{section:MW}
We now recall the key propagation results from
\cite{Melrose-Wunsch:cone}.  In order to do this, we must introduce
the notions of \emph{coisotropic regularity} and \emph{nonfocusing}
with respect to a conic coisotropic submanifold $\coiso$ of $T^*M^\circ.$
We will usually take $\coiso=\flowout_\alpha \sim\flowout_\alpha^S$ with $S$ taken larger
than all times on which we will be considering propagation of
singularities (and hence ignored in our notation).

Let $\module$ denote the space of compactly supported pseudodifferential operators $A \in
\Psi^1(M^\circ)$ that are characteristic on
$\coiso.$  Let $\algebra$ be the filtered algebra generated by
$\module$ over $\Psi^0(M),$
with $\algebra^k=\algebra \cap \Psi^k(M).$ Fix a
Sobolev space $\hilbert.$  Fix a compact set $K \subset S^*M^\circ.$

We now recall the following definitions from \cite{MVW:edges}.
\begin{itemize}
\item\label{coisocond} We say that $u$ has coisotropic regularity with
  respect to $\coiso$ of order $k$ relative to $\hilbert$ in $K$ if
  there exists $A \in \Psi^0(M),$ elliptic on $K$ and supported over
  $M^\circ$ such that $\algebra^k A u \subset \hilbert.$ We say that
  $u$ has coisotropic regularity relative to $\hilbert$ in $K$ if it
  has coisotropic regularity of order $k$ for all $k$>
\item\label{nfcond} We say that $u$ satisfies the nonfocusing
  condition of degree $k$ with respect to $\coiso$ relative to
  $\hilbert$ on $K$ if there exists $A \in \Psi^0(M),$ elliptic on $K$
  and supported over $M^\circ$ such that $A u \subset \algebra^k
  \hilbert.$ We say that $u$ satisfies the nonfocusing condition
  relative to $\hilbert$ on $K$ if it satisfies the condition to some
  degree.
\end{itemize}
If $K$ is omitted (which will only be the case when the distribution
is microsupported away from the boundary), the relevant condition is assumed to
hold on all of $S^*M^\circ$.

In the special case when $\coiso=\flowout_\alpha$ and we work in a
collar neighborhood of the boundary component $Y_{\alpha}$ in which
the metric has the form \eqref{semiproduct}, these definitions
simplify considerably, as the module $\module$ is then generated by
the operators $\pa_{y_j}.$ A distribution $u$ microsupported over such
a neighborhood is coisotropic of order $k$ relative to $\hilbert$ iff
$\ang{\Lap_Y}^{k/2} u \in \hilbert,$ while it satisfies the
nonfocusing condition relative to $\hilbert$ iff there exists $N$ with
$\ang{\Lap_Y}^{-N} u \in \hilbert.$ (Here $\Lap_Y$ denotes the
Laplacian in the $y$-variables with respect to the family of metrics
$h(x)$ on $Y_\alpha.$)

We are now in a position to recall the main results of
\cite{Melrose-Wunsch:cone}.  (Our notation however sticks more closely
to that of \cite{MVW:edges}, which treats the more general case of
edge manifolds.)

Note that here and henceforth we employ the notation $s-0$ to mean $s-\ep$ for every
$\ep>0.$
\begin{proposition}\label{proposition:MW}
Let $u \in \mathcal{C}(\RR; \dom_r)$ be a solution to the wave equation on
$M.$ 
Fix a point $\rho \in \flowout_\alpha$ outgoing with respect to the
cone point $Y_\alpha.$
\begin{enumerate}
\item
If $u$ is microlocally in $H^s$ on all incoming bicharacteristics in
  $\flowout_\alpha$ that are \emph{diffractively related} to $\rho$ then
  $u \in H^s$ microlocally at $\rho.$
\item
Assume that $u$ is nonfocusing with respect to $H^s$ on
$\flowout_\alpha$ and has no wavefront set along
incoming bicharacteristics in $\flowout_\alpha$ that are \emph{geometrically
  related} to $\rho.$  Then $u \in H^{s-0}$ microlocally at $\rho,$
and enjoys coisotropic regularity relative to $H^{s-0}$ in a
neighborhood of $\rho.$
\end{enumerate}
\end{proposition}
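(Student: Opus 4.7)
The plan is to prove both parts microlocally by the positive commutator method, working in the b-/edge calculus adapted to the cone tip. Away from $\pa X$ standard H\"ormander propagation in $T^{*}M^{\circ}$ is applicable, so the task reduces to controlling how wavefront set at the outgoing point $\rho$ can emerge out of the boundary along $\flowout_{\alpha}$. A useful organizing fact is that on $\Sbstar_{\pa X}X$ the compactified generator $Z$ is the lift of the geodesic spray of $h(0,y,dy)$ on the link $Y_{\alpha}$, so the propagation through the cone is governed by the flow on $Y_\alpha$.

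For part (1) I would construct $A\in\Psi^{0}(M^{\circ})$ elliptic at $\rho$ and supported in a small conic neighborhood of the union of $\rho$ with all incoming bicharacteristics in $\flowout_{\alpha}$ diffractively related to $\rho$. Its principal symbol $a$ is arranged so that its derivative along the Hamilton flow of $Z$ has the form $-b^{2}+e$, with $b$ elliptic at $\rho$ and $e$ supported on the incoming diffractive predecessors. Pairing $\ang{i[\Box,A^{*}A]u,u}$ against $\Box u=0$, applying sharp G{\aa}rding, and invoking the assumed microlocal $H^{s}$ control on the support of $e$ then yields the $H^{s}$ bound at $\rho$. The technical point is that the monotone commutant must sweep through \emph{every} incoming direction over $Y_{\alpha}$ simultaneously, which is exactly the information encoded by $\dtilde$ and which reduces to building an appropriate monotone function on $\Sbstar_{Y_{\alpha}}X$ adapted to the spray of $h(0,y,dy)$.

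For part (2) I would first check that the test module $\module$ of $\Psi^{1}$ operators characteristic on $\flowout_{\alpha}$ is stable under commutation with $\Box$ modulo the algebra $\algebra$ itself, so that the positive commutator scheme of (1) can be iterated with elements of $\algebra$ inserted. This propagates coisotropic regularity of $u$ with respect to $\flowout_{\alpha}$ through the cone. The nonfocusing hypothesis provides $A'u\in \algebra^{N}H^{s}$ for some $N$ near the incoming rays, while the assumed absence of wavefront set on geometrically related incoming bicharacteristics means the error term $e$ in the iterated commutator argument is supported only on nonfocusing (i.e., non-geometric) incoming directions. Interpolating between the propagated iterated coisotropic regularity and the $\algebra^{N}H^{s}$ bound gives $u\in H^{s-0}$ microlocally at $\rho$, and the neighborhood-coisotropic statement falls out of the same iteration.

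The main obstacle I anticipate is the construction of the commutant in (1): since $Z$ degenerates over $\pa X$ and the diffractive relation connects every incoming codirection over $Y_{\alpha}$ to $\rho$, the symbol $a$ must be monotone along an entire compact family of trajectories on the link, and the edge-calculus error terms generated by this global construction near the tip must be absorbed without reintroducing singularities from the boundary. Once this is done, the additional bookkeeping required in (2)---verifying the commutation relations inside $\algebra$ and tracking losses carefully enough to preserve the sharp ``$s-0$'' exponent under interpolation---is a systematic extension of the argument of (1).
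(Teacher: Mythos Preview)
The paper does not actually prove this proposition: it is stated as a recollection of the main results of Melrose--Wunsch \cite{Melrose-Wunsch:cone} (part (1) is their diffractive theorem, Theorem~I.2; part (2) is their geometric theorem, Theorem~I.3/8.1). The only argument the paper supplies is the observation in the Remark immediately following the statement: the coisotropic-regularity clause in (2) is obtained by interpolating part~(iii) of Theorem~8.1 of \cite{Melrose-Wunsch:cone}, which gives coisotropic regularity relative to \emph{some} Sobolev space, against the $H^{s-0}$ regularity already established at $\rho$. So from the paper's point of view there is nothing to prove beyond that one interpolation.

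Your sketch is, broadly, an outline of how \cite{Melrose-Wunsch:cone} itself proceeds (positive commutators in the b-calculus, module regularity for the coisotropic $\flowout_\alpha$), so it is not wrong in spirit; but it is a substantial undertaking and not what the present paper does. One point where your outline is slightly off: in (2) you write that interpolating ``between the propagated iterated coisotropic regularity and the $\algebra^{N}H^{s}$ bound gives $u\in H^{s-0}$.'' In fact the $H^{s-0}$ regularity at $\rho$ in \cite{Melrose-Wunsch:cone} is not obtained by such an interpolation; it comes from a duality argument pairing the nonfocusing condition (membership in $\algebra^{N}H^{s}$) against propagated coisotropic regularity of a suitable test solution. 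The interpolation step, as the paper's Remark explains, is used afterwards and in the other direction: once $u\in H^{s-0}$ at $\rho$ is known, one interpolates with the (a priori weaker) coisotropic regularity from Theorem~8.1(iii) to upgrade the base Sobolev space of the coisotropic statement to $H^{s-0}$.
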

\begin{remark} 
  \mbox{}
  \begin{itemize}
  \item
    The coisotropic regularity part of this result is left slightly implicit in the results of
    \cite{Melrose-Wunsch:cone} but follows easily by interpolating
    part (iii) of Theorem~8.1 of that paper, which gives coisotropic
    regularity relative to \emph{some}  Sobolev space, with the overall
    regularity of the solution microlocally near $\rho.$
  \item The following consequence is more germane to what follows (and
    perhaps easier to digest): Fix $s,k$ with $r<k<s.$ Say our
    solution is nonfocusing with respect to $H^s.$ If a singularity in
    $\WF^k$ (the set where $u \notin H^k$ microlocally) arrives at
    $Y_\alpha$ along just one ray in $\flowout_\alpha,$ with $u$
    microlocally smooth along all other arriving rays, then it may
    produce milder singularities, at worst in $H^{s-0}$ and
    coisotropic relative to this space, along rays
    \emph{diffractively} related to the incoming singularity, but has
    strong singularities in $\WF^k$ along (at least some of) those
    \emph{geometrically} related to it.
\end{itemize}
\end{remark}
The crucial example is of course the fundamental solution
$$
u=\frac{\sin t\sqrt{\Lap}}{\sqrt{\Lap}} \delta_p
$$
with $p$ close to a boundary component $Y_\alpha.$
This solution is overall in $H^{-n/2+1-0},$ but it satisfies the
nonfocusing condition relative to $H^{1/2-0}.$  When the 
singularity strikes $Y_\alpha$ is produces strong singularities (no
better than $H^{-n/2+1-0}$) along the geometric continuations of the
geodesic from $p$ to $Y_\alpha$ but only weaker singularities in
$H^{1/2-0}$ along the rest of the points in a spherical wavefront
emanating from $Y_\alpha,$ along which the solution in fact enjoys
\emph{Lagrangian} regularity.

The same reasoning that applies to the fundamental solution in fact
applies to a solution given by a Lagrangian distribution with respect
to a Lagrangian manifold intersecting $\flowout_\alpha$ transversely.
In this paper, however, we will be concerned with a slightly different
setting, in which our hypotheses on the solution are themselves that
of coisotropic regularity, but with respect to a \emph{different}
coisotropic manifold, intersecting $\flowout_\alpha$ transversely (in
particular, with respect to $\flowout_\beta$ for some $\beta$).  In
applying Proposition~\ref{proposition:MW} above, we will need a result
in order to verify the nonfocusing hypotheses.  This result is
discussed in the next section.

\subsection{Intersecting coisotropics}
\label{sec:inters-cois}

In this section we prove the following result allowing us to verify
the nonfocusing hypotheses of Proposition~\ref{proposition:MW}.
\begin{proposition}\label{proposition:nonfocusingholds}
Let $\coiso$ denote a conic coisotropic manifold of codimension $n-1$
intersecting $\flowout_\alpha$ in such a way that the isotropic
foliations of the two coisotropic manifolds are transverse.  Let $u$
enjoy coisotropic regularity with respect to $\coiso,$ relative to
$H^s.$  Then $u$ is nonfocusing with respect to $\flowout_\alpha$
relative to $H^{s+(n-1)/2-0}.$
\end{proposition}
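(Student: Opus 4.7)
The strategy is to reduce to a local symplectic model via Fourier integral operator conjugation, and then conclude by a Sobolev embedding in $n-1$ variables.

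The hypothesis and conclusion are both microlocal and invariant under conjugation by elliptic FIOs, so I would work in a small conic neighborhood of a point $\rho \in \coiso \cap \flowout_\alpha$. The transversality of isotropic foliations at $\rho$, together with the common codimension $n-1$, is precisely what is needed for a simultaneous Darboux normal form: one has $T_\rho \coiso \cap T_\rho \flowout_\alpha$ of dimension $2$ (this is equivalent, by symplectic duality, to the statement that $T_\rho \coiso^\omega \cap T_\rho \flowout_\alpha^\omega = 0$), and a symplectomorphism defined near $\rho$ sends $\flowout_\alpha$ to $\{\eta = 0\}$ and $\coiso$ to $\{y = 0\}$ in symplectic coordinates $(x_0, y, \xi_0, \eta)$ on $T^*\RR^n$, with $y, \eta \in \RR^{n-1}$. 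Quantizing this symplectomorphism by an elliptic FIO and conjugating reduces the problem to this model.

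In the local model, the module $\module$ associated to $\coiso = \{y = 0\}$ consists, modulo lower-order terms, of operators of the form $y_j \cdot B$ with $B \in \Psi^1$. A short commutator computation shows that the coisotropic regularity hypothesis translates, on the Fourier side, to the family of bounds $\langle \xi, \eta \rangle^{s + |\beta|} D_\eta^\beta \hat{u}(\xi, \eta) \in L^2(d\xi\, d\eta)$ for every multi-index $\beta$. Since $\rho$ lies at $\eta = 0$ with $\xi_0 \neq 0$, we may assume $\hat u$ is supported where $|\eta| \lesssim |\xi|$, and in that conic region $\langle \xi, \eta \rangle \sim \langle \xi \rangle$.

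The nonfocusing estimate then follows by a Sobolev embedding in $\eta \in \RR^{n-1}$. Localizing to a dyadic shell $|\xi| \sim R$ and rescaling $\tilde \eta = \eta / R$ so that the support in the rescaled variable lies in $|\tilde\eta| \lesssim 1$, the hypothesis above gives uniform control of the $H^k(\RR^{n-1})$ norm of the rescaled function for every $k$. The embedding $H^k(\RR^{n-1}) \hookrightarrow L^\infty(\RR^{n-1})$ valid for $k > (n-1)/2$ then yields a pointwise bound on $\hat u(\xi, \cdot)$. Multiplying by $\langle \eta \rangle^{-N}$ for $N > (n-1)/2$ produces an integrand whose integral in $\eta$ is controlled by this pointwise bound; summing the resulting dyadic contributions, each of size $R^{-2\ep}$, gives $\langle D_y\rangle^{-N} u \in H^{s + (n-1)/2 - \ep}$ for any $\ep > 0$, which is the desired nonfocusing condition.

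The principal difficulty is the bookkeeping: one must verify that FIO conjugation preserves both the coisotropic-regularity hypothesis and the nonfocusing conclusion, which rests on symplectic invariance of principal symbols modulo smoothing errors; and one must track weights carefully through the Sobolev rescaling. The loss $-0$ in the Sobolev exponent reflects the borderline nature of the embedding $H^{(n-1)/2} \hookrightarrow L^\infty$.
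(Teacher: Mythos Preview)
Your overall strategy matches the paper's exactly: reduce via an elliptic FIO to a symplectic normal form in which the two coisotropics are $\{y=0\}$ and $\{\eta=0\}$ (the paper records this as a separate lemma, proved by inductively peeling off one symplectic coordinate pair at a time using the transversality of the foliations), and then verify nonfocusing directly in the model.

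Where you differ is in the execution of the model computation. The paper works on the physical-space side: coisotropic regularity with respect to $\{y=0\}$ gives iterated regularity under $y_iD_{y_j}$ and $y_iD_{x_0}$, so in particular $u$ is conormal to the origin in the $y$-variables; Sobolev embedding then yields $u\in |y|^{-(n-1)/2-0}L^\infty_y$ with values in $L^2_{x_0}$, whence the partial Fourier transform in $y$ is bounded and $u\in L^2_{x_0}H^{-(n-1)/2-0}_y$. The cross-regularity $y^\alpha D_{x_0}^\gamma u\in L^2$ (for $|\alpha|=|\gamma|$) is then used to trade powers of $|y|$ for $H^s_{x_0}$ regularity, bootstrapping into $H^{(n-1)/2-0}_{x_0}\bigl(H^{-(n-1)/2-0}_y\bigr)$; applying $\langle D_y\rangle^{-N}$ and a product-Sobolev inclusion finishes the argument. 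Your route is essentially the Fourier dual of this: you take the full Fourier transform, read the hypothesis as $D_\eta$-smoothness of $\hat u$ with weights, and invoke Sobolev embedding in $\eta$ after a dyadic rescaling. Both arguments ultimately rest on the same $(n-1)$-variable Sobolev embedding; yours is somewhat more streamlined in that the dyadic rescaling handles the $x_0$ and $y$ directions simultaneously and avoids the separate bootstrap step, while the paper's version makes the role of conormality to $\{y=0\}$ more geometrically transparent.
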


The proof of the proposition relies on two lemmas.  We first show it
in a model setting:
\begin{lemma}\label{lemma:coisoNF}
In $\RR^{n_1+n_2}$ with coordinates $z \in \RR^{n_1},$ $z' \in
\RR^{n_2},$ let $\coiso,$ $\tcoiso$ denote the two model
coisotropic manifolds
$$
\coiso=\{\zeta=0\},\ \tcoiso=\{z=0\}.
$$
If a compactly supported distribution enjoys coisotropic
regularity with respect to $\tcoiso$ relative to $H^s$ then it is nonfocusing with
respect to $\coiso$ relative to $H^{s+n_1/2-0}.$  That is to say,
there exists $N \gg 0$ such that
$$
\ang{\Lap_{z}}^{-N} u \in H^{s+n_1/2-0}_{\loc}(\RR^{n_1+n_2}).
$$
\end{lemma}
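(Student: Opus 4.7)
Plan: I would pass to the full Fourier transform $w(\zeta, \zeta') = \hat u(\zeta, \zeta')$ and reduce the problem to a weighted $L^2$ estimate, which I would handle by splitting the Fourier domain according to whether $\ang\zeta$ or $\ang{\zeta'}$ dominates and using a scale-invariant Sobolev embedding in the $\zeta$-variable to supply the $n_1/2$ gain.

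First I would unpack the hypothesis. The module $\module$ attached to $\tcoiso = \{z=0\}$ is generated, modulo $\Psi^0$, by the order-one operators $z_j D_{z_k}$ and $z_j D_{z'_l}$, whose principal symbols vanish on $\tcoiso$. After rearranging commutators, iterated coisotropic regularity to all orders relative to $H^s$ is equivalent to the family of Sobolev estimates $z^\alpha D_z^\beta D_{z'}^\gamma u \in H^s(\RR^{n_1+n_2})$ for all multi-indices with $|\alpha| \geq |\beta| + |\gamma|$, which on the Fourier side reads
\[
\int \ang{(\zeta,\zeta')}^{2s} |\zeta|^{2|\beta|} |\zeta'|^{2|\gamma|} |\partial_\zeta^\alpha w|^2\, d\zeta\, d\zeta' < \infty
\]
for all admissible $\alpha, \beta, \gamma$. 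The desired conclusion $\ang{\Lap_z}^{-N} u \in H^{s+n_1/2-\epsilon}_{\loc}$ becomes, since $\ang{\Lap_z}^{-N}$ has Fourier symbol comparable to $\ang\zeta^{-2N}$,
\[
\int \ang{(\zeta,\zeta')}^{2(s+n_1/2-\epsilon)} \ang\zeta^{-4N} |w|^2\, d\zeta\, d\zeta' < \infty
\]
for some large $N$.

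I would split the integration into $\Omega_1 = \{\ang\zeta \geq \ang{\zeta'}\}$ and $\Omega_2 = \{\ang\zeta < \ang{\zeta'}\}$. On $\Omega_1$, $\ang{(\zeta,\zeta')} \sim \ang\zeta$, and for $N$ large the leftover factor $\ang\zeta^{n_1 - 2\epsilon - 4N}$ is bounded, reducing the bound to $\|u\|_{H^s}^2 < \infty$. On $\Omega_2$, where $\ang{(\zeta,\zeta')} \sim \ang{\zeta'}$, I would apply to $w(\cdot, \zeta')$ the scale-invariant Sobolev embedding $H^m(B_{\ang{\zeta'}}) \hookrightarrow L^\infty(B_{\ang{\zeta'}})$ in the $\zeta$-variable, valid for any $m > n_1/2$:
\[
\|w(\cdot, \zeta')\|_{L^\infty(B_{\ang{\zeta'}})}^2 \lesssim \sum_{|\alpha|\leq m} \ang{\zeta'}^{2|\alpha|-n_1} \|\partial_\zeta^\alpha w(\cdot, \zeta')\|_{L^2(B_{\ang{\zeta'}})}^2.
\]
Inserting this bound, integrating out $\ang\zeta^{-4N}$ over $\zeta$ (finite for $N > n_1/4$), and applying the coisotropic Fourier-side estimate with $\beta = 0$ and $|\gamma| = |\alpha|$ controls each resulting summand of the form $\int_{\ang\zeta<\ang{\zeta'}} \ang{\zeta'}^{2s+2|\alpha|-2\epsilon} |\partial_\zeta^\alpha w|^2\, d\zeta\, d\zeta'$.

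The main obstacle is a bookkeeping matching of exponents in this last step: the rescaling in the Sobolev embedding on a ball of radius $\ang{\zeta'}$ produces precisely the factor $\ang{\zeta'}^{2|\alpha|-n_1}$ for each order $|\alpha|$ of $\zeta$-derivative, while the coisotropic hypothesis supplies the factor $|\zeta'|^{2|\gamma|}$ only when $|\gamma| \leq |\alpha|$. Choosing $|\gamma| = |\alpha|$ makes these powers cancel exactly (away from $\zeta' = 0$, where compactness of the integration region causes no trouble), and what survives is the gain of $n_1/2 - \epsilon$ in the Sobolev exponent, which is precisely the threshold of the embedding $H^m \hookrightarrow L^\infty$ in the $n_1$-dimensional $\zeta$-variable.
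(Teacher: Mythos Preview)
Your argument is correct and takes a genuinely different route from the paper's. The paper first reduces to $s=0$, then works in position space: it observes that the iterated regularity under $z_iD_{z_k}$ places $u$ (and more generally $z^\alpha D_{z'}^\gamma u$ for $|\alpha|=|\gamma|$) in $L^2\big(\RR^{n_2}_{z'};\,I^{(0)}(\RR^{n_1}_z;0)\big)$, the space of conormal distributions at the origin in the $z$-variables. It then invokes the Sobolev embedding $I^{(0)}\subset |z|^{-n_1/2-0}L^\infty$ to deduce $u\in H^\sigma(\RR^{n_2};H^{-n_1/2-0}(\RR^{n_1}))$ for $\sigma<n_1/2$, from which nonfocusing follows after applying $\ang{\Lap_z}^{-N}$. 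Your proof instead stays entirely on the Fourier side: the dyadic split $\{\ang\zeta\gtrless\ang{\zeta'}\}$ isolates the only nontrivial region, and the rescaled Sobolev embedding $H^m(B_R)\hookrightarrow L^\infty(B_R)$ in the $\zeta$-variable, with $R=\ang{\zeta'}$, plays the role that the conormal embedding plays in the paper. Both arguments ultimately exploit the same information---the coisotropic hypothesis yields, on the Fourier side, exactly the $\partial_\zeta$-regularity needed to feed into a Sobolev embedding with the correct scaling $R^{2|\alpha|-n_1}$---but your packaging is more direct and in fact delivers the endpoint gain $n_1/2$ rather than $n_1/2-0$, since the scaling exponent in the embedding is independent of the choice of $m>n_1/2$. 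The paper's argument, by contrast, loses the $\epsilon$ at the step $I^{(0)}\subset|z|^{-n_1/2-0}L^\infty$, which genuinely fails at the endpoint.
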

Note that $n_1$ is the dimension of the leaves of the isotropic foliation.

\begin{proof}
By applying powers of $\Lap$ we may reduce to the case $s=0.$
Coisotropic regularity with respect to $\tcoiso$ relative to $L^2$
means that
$$
z^{\alpha} D_{z'}^\beta  D_z^\gamma u \in L^2,\ \text{if } \smallabs{\alpha}=\smallabs{\beta}+\smallabs{\gamma},
$$
as the vector fields $z_i D_{z_j}$ and $z_i D_{z'_k}$ have symbols
cutting out $\tcoiso.$

In particular, the iterated regularity under $z_i D_{z_k}$ means that we have
$$
z^\alpha D_{z'}^\gamma u \in L^2(\RR^{n_2}_z;I^{(0)}(\RR^{n_1}; 0)),\  \smallabs{\alpha}=\smallabs{\gamma},
$$
where $I^{(s)}(\RR^{n_1}; 0)$ denotes the space of functions $v \in
H^s(\RR^{n_1})$ enjoying iterated regularity under vector fields $z_i
D_{z_k},$ i.e., conormal regularity with respect to the origin.

Now we claim that if $\rho<n_1/2$ is positive,
$$
\smallabs{z}^{-\rho} I^{(0)}(\RR^{n_1}; 0)\subset H^{-n_1/2-0}(\RR^{n_1}).
$$
Indeed, interpolation shows that for a compactly supported $u \in  I^{(0)}(\RR^{n_1}; 0),$ 
$$
\Lap_z^{s/2} \abs{z}^{s} u \in L^2
$$
for all $s>0.$  Hence Sobolev embedding 
shows that
$I^{(0)}$ is contained in $\smallabs{z}^{-n_{1}/2-0} L^\infty.$ Thus, for
$\rho < n_{1}/2$, $\smallabs{z}^{-\rho}I^{(0)}(\reals^{n_{1}};0)$ is
contained in $\smallabs{z}^{-n_{1}+\epsilon}L^{\infty}$ for some
$\epsilon > 0$.  This in turn implies that for compactly supported $u \in
\smallabs{z}^{-\rho}I^{(0)}(\reals^{n_{1}};0)$, $\check{u}$ is bounded
and so $\langle \zeta\rangle^{-n_{1}/2 - \epsilon} \check{u}\in L^{2}$
for all $\epsilon$.  Taking the inverse Fourier transform yields
$u \in H^{-n_{1}/2-\epsilon}(\reals^{n_{1}})$ for all $\epsilon > 0$.

Thus, applying powers of $z^\alpha D_{z'}^\gamma u$ (and
again interpolating to deal with fractional powers) we find that
$$
u \in H^s(\RR^{n_2}; H^{-n_1/2-0}(\RR^{n_1})),\ s<n_1/2.
$$
Finally, this implies that for $N>n_1/2,$
$$
\ang{\Lap_{z}}^{-N} u \in H^s(\RR^{n_2}; H^{n_1/2}(\RR^{n_1})),\ s<n_1/2.
$$
Since $H^{n_1/2-0} (\RR^{n_2}; H^{n_1/2}(\RR^{n_1})) \subset
H^{n_1/2-0}(\RR^{n_1+n_2})$ this implies nonfocusing with respect to
$n_1/2-0$ as desired.
\end{proof}

Lemma~\ref{lemma:coisoNF} is in fact quite general, as the form of the
coisotropic distributions employed there is a normal form for
intersecting conic coisotropic manifolds with transverse
foliations:
\begin{lemma}\label{lemma:coisonormalform}
Let $\coiso, \tcoiso$ be conic coisotropic submanifolds of
$T^*(\RR^n),$ each of codimension $k<n,$ and intersecting at $\rho \in
T^*(\RR^n)$ in such a way that the isotropic foliation of each is
transverse to the other.  Then there exist local symplectic
coordinates $(z,z',\zeta,\zeta')$ in which $\rho$ lies at the origin, and
$$
\coiso=\{\zeta=0\},\ \tcoiso=\{z=0\}.
$$
\end{lemma}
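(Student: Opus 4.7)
The plan is to put the two coisotropic manifolds into simultaneous standard form in three stages: first to straighten $\coiso$ itself by a Darboux-type construction, then to align $T_\rho\tcoiso$ with $\{z=0\}$ by a linear symplectic change preserving $\coiso$, and finally to promote this linear match to a match of full submanifolds by a relative Moser deformation.

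First I would apply the (conic) coisotropic Darboux normal form to $\coiso$ alone. Since $\coiso$ is coisotropic of codimension $k$, one can choose $k$ homogeneous Poisson-commuting defining functions $\zeta_1,\ldots,\zeta_k$ and, via the Caratheodory--Jacobi--Lie extension theorem, complete them to a conic symplectic coordinate system $(z,z',\zeta,\zeta')$ near $\rho$. Then $\coiso=\{\zeta=0\}$ exactly, and the isotropic foliation of $\coiso$ has tangent space $E_1:=\operatorname{span}(\partial_{z_1},\ldots,\partial_{z_k})$ at the origin.

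Second, I would bring $T_\rho\tcoiso$ into model position by a linear symplectomorphism. The transversality hypothesis, unpacked at the tangent level, says that $E_2:=(T_\rho\tcoiso)^\perp$ is a $k$-dimensional isotropic subspace transverse to $E_1$, with the symplectic pairing $\omega\colon E_1\times E_2\to\RR$ nondegenerate; hence $E_1+E_2$ is a $2k$-dimensional symplectic subspace in which $E_1$ and $E_2$ appear as transverse Lagrangians. A linear symplectic isomorphism of $E_1+E_2$ carrying $E_1$ to itself and $E_2$ to $\operatorname{span}(\partial_{\zeta_1},\ldots,\partial_{\zeta_k})$, extended arbitrarily symplectically on the symplectic complement $(E_1+E_2)^\perp$, gives a global linear symplectomorphism that automatically preserves $\coiso=E_1^\perp=\{\zeta=0\}$ (since symplectomorphisms preserve symplectic orthogonals) and arranges $T_\rho\tcoiso=\{z=0\}$.

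Third, I would upgrade this tangent-level match to a match of full submanifolds via a relative Moser argument. With the isotropic foliation of $\coiso$ now transverse to $\tcoiso$ at $\rho$, one writes $\tcoiso$ locally as a graph over $\{z=0\}$ along the foliation, producing a smooth family $\tcoiso_t$ of coisotropic submanifolds joining $\{z=0\}$ at $t=0$ to $\tcoiso$ at $t=1$. One then seeks a time-dependent Hamiltonian $h_t$ whose flow carries $\tcoiso_t$ back to $\{z=0\}$ while remaining tangent to $\coiso$ (equivalently, $\{h_t,\zeta_j\}|_{\coiso}=0$ for all $j$); the resulting symplectomorphism produces the sought coordinates. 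I expect this last step to be the main obstacle: both conditions on $h_t$ are constraints that must be satisfied simultaneously, but they reduce to a linear cohomological problem whose solvability is guaranteed precisely by the transversality hypothesis, which decouples the conormal directions of $\tcoiso$ from the isotropic directions of $\coiso$.
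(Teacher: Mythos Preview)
Your strategy is genuinely different from the paper's, and Steps~1 and~2 are fine. The paper does not straighten $\coiso$ in one shot and then deform $\tcoiso$; instead it runs an induction on the codimension~$k$, peeling off one conjugate pair at a time. Concretely, it picks a single homogeneous defining function $f$ for $\coiso$, uses its Hamilton flow (which is tangent to $\coiso$ and, by your transversality hypothesis, transverse to $\tcoiso$) to produce a flowout-time function $g$ with $\{f,g\}=1$ and $g|_{\tcoiso}=0$, and then extends $(g,f)$ to a full symplectic system via H\"ormander's Darboux theorem. Because $\hamvf_f$ and $\hamvf_g$ are tangent to $\coiso$ and $\tcoiso$ respectively, both manifolds split as products in the new coordinates, reducing to codimension $k-1$ in $T^*\RR^{n-1}$. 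This sidesteps any deformation of submanifolds.

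The genuine gap in your proposal is Step~3. Writing $\tcoiso$ as a graph $\{z=\phi(z',\zeta,\zeta')\}$ and linearly interpolating to $\tcoiso_t=\{z=t\phi\}$ does \emph{not} produce coisotropic submanifolds for $0<t<1$ in general: the coisotropic condition is a nonlinear first-order PDE constraint on $\phi$ (Poisson-commutation of defining functions modulo the ideal), and it is not preserved under scaling. But your Moser scheme requires $\tcoiso_t$ to be coisotropic, since a Hamiltonian flow carries coisotropics to coisotropics; if $\tcoiso_t$ is not coisotropic there is no $h_t$ whose flow tracks the family. The ``cohomological problem'' you allude to presupposes exactly this. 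Transversality of the isotropic foliations is a pointwise condition at $\rho$ and gives you Step~2, but it does not supply a coisotropic isotopy from $\{z=0\}$ to $\tcoiso$. To rescue the approach you would need either a different path through coisotropics (not obvious) or to abandon Moser in favor of something like the paper's one-coordinate-at-a-time construction.
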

\begin{proof}
We choose a degree-one homogeneous function $f$ vanishing simply
along $\coiso.$  Thus its Hamilton vector field $\hamvf_f$ has degree zero and is by definition tangent to $\coiso$
and transverse to $\tcoiso.$  On the flowout of $\tcoiso$ by
$\hamvf_f$ we let $g$ denote the homogeneous, degree-zero function measuring the time of
flowout from $\tcoiso;$ we may further extend $g$ to a homogeneous
degree-zero function (also denoted $g$) on $T^*\RR^n$ such that
$$
\{f,g\}=1
$$
and with
$$
g=0,\ dg \neq 0 \text{ on } \tcoiso
$$
(To make this extension, take coordinates, not necessarily symplectic,
in which $\tcoiso$ lies along coordinate axes, and define $g$ locally to be the
time to flow out to a nearby point from a hyperplane containing $\tcoiso$ and transverse
to $\hamvf_f.$)

Now by Theorem~21.1.9 of \cite{Hormander:v3}, we may extend the
coordinates
$$
(z_k,\zeta_k) \equiv (g,f)
$$
to a full system of homogeneous symplectic coordinates.  Thus, we have
locally achieved
$$
\coiso \subset \{\zeta_k=0\},\ \tcoiso \subset \{z_k=0\}.
$$
Since $\hamvf_{\zeta_k},\hamvf_{z_k}$ are respectively tangent to
$\coiso, \tcoiso,$ these manifolds are locally \emph{products} of the
form
$$
\coiso' \times \{(z_k \in \RR, \zeta_k=0)\},\ \tcoiso' \times \{(z_k
=0, \zeta_k\in \RR)\}
$$
with $\coiso', \tcoiso'$ coisotropic submanifolds of $T^*\RR^{n-1}$
satisfying the hypotheses of the lemma with $n$ replaced by $n-1$ and
$k$ replaced by $k-1$.

The result then follows by induction.
\end{proof}

\begin{proof}[Proof of Proposition~\ref{proposition:nonfocusingholds}]
  Proposition~\ref{proposition:nonfocusingholds} now follows from the
  two lemmas above: we can bring the two coisotropic manifolds
  $\flowout_\alpha$ and $\coiso$ into the normal form given by
  Lemma~\ref{lemma:coisonormalform}.  Lemma~\ref{lemma:coisoNF} then
  shows that nonfocusing holds, with $n_1=n-1,$ the dimension of the
  isotropic foliation (which in the case of $\flowout_\alpha$ has
  leaves obtained by varying $y \in Y_\alpha$).
\end{proof}

\subsection{Geometric Assumptions}\label{section:assumptions}
We are now able to state our geometric assumptions:
\begin{assumption}\label{assumption:escape}
  Let $\Omega\supset K$ be an open set with $X\backslash \Omega \sim
  \RR^n \backslash B^n(0,R_1)$ for some $R_1\gg 0.$ We assume that
  there exists a time $T_0>0$ such that any \emph{geometric} geodesic
  starting in $S_K^* X^\circ$ leaves $\Omega$ in time less than
  $T_{0}$ (and does not come back, by convexity of the ball).
\end{assumption}
\begin{assumption}\label{assumption:collinear}
No geometric geodesic passes through three cone points.
\end{assumption}
\begin{assumption}\label{assumption:conjugate}
  No two cone points $Y_\alpha, Y_\beta$ are conjugate to one another
  along geodesics in $S^*X^\circ$ of lengths less than $T_0$ in the
  sense that whenever $s+t\leq T_0,$ $\flowout_\alpha^s$ and
  $\flowout_\beta^t$ intersect transversely for each
  $\alpha,\beta$.
\end{assumption}
\begin{remark}
  Assumption~\ref{assumption:escape} is a quantitative statement of
  non-trapping of geodesics that do not hit the cone points.

  Assumption~\ref{assumption:collinear} is generic at the formal level of
  dimension counting, as a geodesic arriving at $Y_\gamma$ from $Y_\alpha$
  can be geometrically continued from a family of points in $Y_\gamma$ of
  dimension $n-2$ (those points at distance $\pi$ in $Y_\gamma$ from the
  arrival point); on the other hand there is a (generically) discrete set
  of departure points in $Y_\gamma$ for geodesics of bounded length leading
  to $Y_\beta.$ So we have an intersection of set of codimension-one and a
  set of dimension zero dictating the existence of a geometric geodesic
  through these three points.
\end{remark}

The following result (whose proof is contained in
Appendix~\ref{sec:jacobi-proof}) justifies our use of the term
``conjugate'' above.  We let $\mathcal{V}_b(X)$ denote the space of
``b-vector fields,'' meaning those tangent to $\pa X.$
\begin{proposition}
  \label{proposition:jacobi}
  The coisotropic manifolds $\flowout_\alpha^s$ and $\flowout_\beta^t$
  intersect non-transversely if and only if there exist cone points
  $Y_\alpha,Y_\beta,$ a geodesic $\gamma(t)$ with $\gamma(0) \in
  Y_\alpha,$ $\gamma(t') \in Y_\beta$ (with $t'<s+t$) and a normal
  Jacobi field $W$ along $\gamma$ with $W \in \mathcal{V}_b(X).$
\end{proposition}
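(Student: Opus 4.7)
The plan is to recast non-transversality as a Jacobi field condition via the symplectic geometry of coisotropic submanifolds. By general symplectic geometry, two coisotropic submanifolds meet transversely at $\rho$ exactly when their symplectic perpendiculars intersect trivially, i.e., $(T_\rho\flowoutX_\alpha^s)^\omega \cap (T_\rho\flowoutX_\beta^t)^\omega = 0$; for a coisotropic submanifold this perpendicular is precisely the tangent space to the null (characteristic) foliation. Since the transversality question for $\flowout_\alpha^s$ and $\flowout_\beta^t$ in $\Sigma\subset T^*M^\circ$ reduces, after symplectic reduction along the $(t,\tau)$-direction and the bicharacteristic flow of $\Box$, to the same question for $\flowoutX_\alpha^s, \flowoutX_\beta^t$ in $T^*X^\circ$, I would focus on identifying common null vectors at $\rho$ for these two foliations in $T^*X^\circ$.

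First I would compute the null foliation of $\flowoutX_\alpha$ in the collar model. In the local coordinates $(x,y,\xi,\eta)$ of \eqref{semiproduct}, $\flowoutX_\alpha = \{\eta=0\}$, and a direct calculation using the symplectic form $dx\wedge d\xi + dy\wedge d\eta$ shows that its symplectic perpendicular at each point is spanned by $\partial_{y_1},\ldots,\partial_{y_{n-1}}$. Geometrically, these represent infinitesimal motions of the basepoint of the radial bicharacteristic along $Y_\alpha$. The null foliation is preserved by the Hamilton flow of $g/2$, so transporting such a null vector from the collar of $Y_\alpha$ along the bicharacteristic through $\rho$ is equivalent, via the standard correspondence between tangent vectors to $T^*X$ along a bicharacteristic and variations through geodesics, to solving Jacobi's equation along $\gamma$ with initial value in $T_{\gamma(0)}Y_\alpha$. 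In b-geometric language, using ${}^bTX$ to capture the appropriate smooth extension of tangent vectors across the cone point, this describes precisely a Jacobi field $W$ along $\gamma$ which is a smooth section of ${}^bTX$ (hence $W\in\mathcal{V}_b(X)$ near $\gamma(0)$) and whose value at $\gamma(0)$ lies in $T_{\gamma(0)}Y_\alpha \subset {}^bT_{\gamma(0)}X$.

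A symmetric computation at the $Y_\beta$ end identifies the null perpendicular of $\flowoutX_\beta$ at $\rho$ with Jacobi fields along $\gamma$ which are b-tangent at $\gamma(t')$. A common null vector therefore corresponds to a nonzero Jacobi field $W\in\mathcal{V}_b(X)$ along $\gamma$ whose values at both cone-point endpoints lie in the respective tangent spaces $TY_\alpha, TY_\beta$. To see that such a $W$ is automatically normal to $\gamma$, decompose $W = W_N + (a+bs)\gamma'(s)$; at either cone-point endpoint $\gamma'$ equals $\partial_x$, which is not a b-vector field, so b-tangency at both endpoints forces $a=0$ and $bt'=0$, whence $a=b=0$ and $W=W_N$. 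This establishes the desired equivalence.

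The main technical obstacle is carefully justifying the Jacobi field correspondence uniformly up to the cone points at both ends: one must verify that the pushforward of a $\partial_y$ null vector at a collar point near $Y_\alpha$ under the full Hamilton flow is correctly represented by a smooth Jacobi section of ${}^bTX$ along all of $\gamma$, including in a collar of $Y_\beta$ where its b-tangency becomes the second endpoint condition. This is a direct but delicate calculation using the rescaled spray \eqref{spray}; Assumption~\ref{assumption:collinear} ensures $\gamma$ passes through no intermediate cone points, so the Jacobi field interpretation stays smooth in the interior of $\gamma$ between the two endpoints.
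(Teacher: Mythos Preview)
Your proposal is correct and arrives at the same conclusion by an essentially equivalent route, though packaged differently from the paper. The paper transfers the problem to $TX$ and invokes Eberlein's identification of vectors in $T_{(p,v)}(TX)$ with Jacobi fields via the connection map $K$ and the pushforward $\pi_*$: non-transversality produces an extra tangent vector $\zeta$ in the intersection beyond the flow and radial directions, which can be normalized so that $\pi_*\zeta$ and $K\zeta$ are orthogonal to $v$; a short calculation (their Lemma~\ref{lem:pushforward}) then shows $K\zeta=0$ and $\pi_*\zeta$ is tangent to $\partial_y$ near each cone point, yielding the b-tangency of the Jacobi field at both ends. Your approach stays on the cotangent side and uses the symplectic identity $(T_\rho C_1+T_\rho C_2)^\omega=(T_\rho C_1)^\omega\cap(T_\rho C_2)^\omega$ to recast non-transversality as a nontrivial intersection of null foliations; this is exactly the content of the paper's Remark~\ref{remark:jacobi} that transversality of the flowouts is equivalent to transversality of their isotropic fibers, but you make it explicit and use it as the organizing principle. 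Your decomposition $W=W_N+(a+bs)\gamma'$ together with the observation that $\gamma'=\partial_x$ is not b-tangent is a clean way to extract normality, whereas the paper gets normality by subtracting off the flow and radial directions at the outset. The ``main technical obstacle'' you flag---checking that the transported null vector really yields a smooth b-section of the Jacobi field up to both boundary components---is precisely what the paper's Lemma~\ref{lem:pushforward} supplies, so the two arguments share the same technical core.
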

\begin{remark}
  \label{remark:jacobi}
\mbox{}
\begin{itemize}
\item This result is equivalent to the same statement for the
  manifolds $\flowoutX_\alpha^s,$ $\flowoutX_\alpha^t,$ as the
  incoming and outgoing components of $\flowout_\alpha^s$ are each
  naturally identified with $\RR_t \times \flowout_\alpha^s.$
\item The transverse intersection stated here is equivalent to the
  transverse intersection of the \emph{isotropic fibers} of the coisotropic
  manifolds, as will be seen in the proof below.  In particular, a point of
  intersection automatically entails that the tangent spaces of both
  manifolds contain the tangent to a geodesic and the radial vector field;
  in both manifolds, the space spanned by these two directions is the base
  of the coisotropic foliation.
\item
The usual description of conjugate points involves the vanishing of
the Jacobi field at the endpoints of the geodesic; here, by contrast,
since $W \in \mathcal{V}_b(X),$ we find that the metric length of $W$
vanishes at the endpoints, hence the variation along $W$ should be
regarded as an admissible one for a one-parameter family of geodesics:
varying the endpoint in $Y_\alpha$ should be regarded as varying the
\emph{direction} of departure from the cone point, with the location
of the end fixed at the ``point'' $Y_\alpha$.
\end{itemize}
\end{remark}

Finally, we note that it always suffices to prove the results of this
paper with $\chi$ replaced by $\widetilde{\chi}$ which is $1$ on
$\supp \chi;$ also, by the convexity of the Euclidean ball, the
hypotheses of the theorems are still satisfied if we replace our given
$K$ by a larger compact set. \emph{Therefore, we will assume without
  loss of generality that our manifold is Euclidean on $\supp
  (1-\chi)$ and that $\supp \chi \subset K.$} We also use $\Omega$ to
denote a small neighborhood of the compact set $K$.

\section{Decomposition of the wave propagator}

Let $$L=\min_{\alpha,\beta} d(Y_\alpha,Y_\beta)$$ denote the minimum
distance between cone points.  Fix $$\delta_A\ll 1,\
\delta_\psi<\frac{L}{200}.$$ 
We let $\psi_\alpha$ be cutoff functions,  each equal to
$1$ in a small neighborhood $x<\delta_\psi/4$ of a single cone point
$Y_\alpha$ and supported in $x<\delta_\psi;$ let $\Upsilon\in \CI(X)$ 
equal $1$ outside $\Omega$ and vanish on $\supp \chi;$
and finally let $A_j$ ($j=1,\dots, N$) be a
pseudodifferential partition of $\Id-\sum\psi_\alpha-\Upsilon$  in which each element is microsupported on a
set of diameter less than $\delta_A$ (with respect to our arbitrary
but fixed metric on $S^*X$) and in particular is microsupported over $K.$
We can then further arrange that
$$
\sum_{j=1}^N A_j+\sum_\alpha \psi_\alpha +\Upsilon-\Id \in \residual.
$$
(The error term in fact can be taken to lie in $\Psi^{-\infty}_c(X^\circ),$
which is to say it is a smoothing operator with Schwartz kernel compactly
supported in $X^\circ,$ and thus in particular vanishing in a neighborhood
of all boundary components.)

We will consider sequences of wave propagators sandwiched between
various of the $A_j$'s: with $J=(j_0,\dots,j_{k+1}),$ set
$$
T_J=A_{j_0} U(t_0) A_{j_1} U(t_1)\dots A_{j_k} U(t_k) A_{j_{k+1}}.
$$
Associated to each such propagator is a \emph{word} $j_0j_1\dots
j_{k+1}$ referring to a sequence of elements of the cover, as well as the
additional data of a time $t_\ell$ associate to each pair of successive
letters $j_{\ell}j_{\ell+1}.$  We say that such a word is \emph{geometrically realizable}
(``GR'') if for each pair of successive sets $j_\ell j_{\ell+1},$ in
the word, there exist $p_\ell \in \WF' A_{j_\ell}$ and $p_{\ell+1} \in
\WF' A_{j_{\ell+1}}$ with
$$
 p_{\ell+1} \gtilde[t_\ell] p_\ell.
$$
We call the word \emph{diffractively realizable} (``DR'') if instead
there are points with
$$
p_{\ell+1} \dtilde[t_\ell] p_\ell.
$$
Note that any GR word is DR, and also that the words are read from
right to left in order to conform with the composition of operators.
We also describe individual pairs of successive letters in a word as
\emph{diffractive} or \emph{geometric interactions} depending on
whether these two-letter subwords are GR or DR.  We say that such a
successive pair of letters \emph{interacts with a cone point} if there
exists a diffractive geodesic of length $t_\ell$ between $p_{\ell} \in
\WF' A_{j_\ell}$ and $p_{\ell+1} \in \WF' A_{j_{\ell+1}}$ that passes
through some boundary component $Y_\alpha$

The following result comes direction from (``diffractive'')
propagation of singularities, Theorem~I.2 of
\cite{Melrose-Wunsch:cone}; its content is simply that singularities
of solutions to the wave equation propagate along diffractive geodesics:
\begin{proposition}
If the word $J$ is not DR then $T_J\in \residual.$
\end{proposition}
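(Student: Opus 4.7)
The plan is to localize the failure of diffractive realizability to a single ``bad'' consecutive pair and then extract smoothing directly from the diffractive propagation theorem. Since $J$ is not DR, fix an index $\ell\in\{0,\dots,k\}$ at which the diffractive relation fails: no $p\in\WF' A_{j_\ell}$ and $q\in\WF' A_{j_{\ell+1}}$ satisfy $p\dtilde[t_\ell] q$. I would factor $T_J=L\,S\,R$ where $S=A_{j_\ell} U(t_\ell) A_{j_{\ell+1}}$ and $L,R$ are the remaining operator products on either side. Since $L$ and $R$ are bounded on each $\energy_s$, and $Sv$ is automatically compactly supported in $X$ because $A_{j_\ell}$ has compactly microsupported Schwartz kernel over $K$, it suffices to prove that $S\in\residual$, i.e., that $S:\energy_{-\infty}\to\energy_{+\infty}$.

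For the core microlocal step, given $v\in\energy_{-N}$, I would consider $u(t)=U(t)\,A_{j_{\ell+1}} v$, a solution of the wave equation whose Cauchy data has spatial wavefront set contained in $\WF' A_{j_{\ell+1}}$. The diffractive propagation of singularities (Theorem I.2 of \cite{Melrose-Wunsch:cone}) then yields
$$
\WF u(t_\ell)\subset\{\, p\in S^*X^\circ : \exists\, q\in \WF' A_{j_{\ell+1}},\ p\dtilde[t_\ell] q\,\}.
$$
By the choice of $\ell$ this set is disjoint from $\WF' A_{j_\ell}$, and by Proposition~\ref{proposition:closed} the disjointness is uniform, since the two microsupports are compact. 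The microlocal elliptic parametrix construction then implies that $A_{j_\ell}\, u(t_\ell)\in\CI$, so $Sv\in\energy_{+\infty,c}$. Since $N$ was arbitrary, $S$ maps $\energy_{-\infty}$ into $\energy_{+\infty,c}$, and hence $S\in\residual$ and $T_J\in\residual$.

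The step I expect to require the most care is promoting the pointwise failure of $\dtilde[t_\ell]$ between the two microsupports into an honest smoothing statement at every Sobolev level simultaneously: without a uniform separation, the elliptic parametrix argument would only give a finite gain of regularity. The closedness of $\dtilde[t]$ recorded in Proposition~\ref{proposition:closed}, combined with the compactness of $\WF' A_{j_\ell}$ and $\WF' A_{j_{\ell+1}}$, is exactly what upgrades pointwise disjointness to an open separation and thereby feeds the microlocal parametrix at each level $N$. Beyond this observation, the argument is an assembly of standard ingredients---diffractive wavefront propagation for $U(t)$, microlocal ellipticity of the $A_j$'s, and $\energy_s$-boundedness of the remaining factors---and requires no further estimates.
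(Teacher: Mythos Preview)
Your proposal is correct and is essentially the paper's approach, spelled out in more detail: the paper's entire proof is a one-sentence citation of the diffractive propagation theorem (Theorem~I.2 of \cite{Melrose-Wunsch:cone}), and your factorization $T_J=LSR$ together with the wavefront-set argument for $S=A_{j_\ell}U(t_\ell)A_{j_{\ell+1}}$ is precisely the natural unpacking of that citation. Your concern about uniformity is slightly overstated---once you know $Sv\in\energy_{+\infty,c}$ for every $v$, the operator bound $S:\energy_{-N}\to\energy_M$ follows from the closed graph theorem, since $S$ is already continuous $\energy_{-N}\to\energy_{-N}$---but invoking Proposition~\ref{proposition:closed} does no harm.
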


\begin{lemma}
  If $\delta_A$ is chosen small enough and $t_0+\dots +t_{k}>2T_0,$
  the word $J=(j_0\dots j_{k+1})$ cannot be GR.
\end{lemma}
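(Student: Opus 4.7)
The plan is to argue by contradiction and compactness, using Proposition~\ref{proposition:closed} (closedness of the geometric relation $\gtilde[t]$) to extract an honest geometric geodesic that violates Assumption~\ref{assumption:escape}. Since each $A_j$ is microsupported over $K$, any realizing points $p_\ell$ for a GR word lie in $S_K^*X^\circ$, and Assumption~\ref{assumption:escape} forces $t_\ell<T_0$ for each segment: otherwise the underlying geometric geodesic has already exited $\Omega$ and (by convexity of the ball) does not return, so cannot end at $p_{\ell+1}\in K$. Hence, by truncating $J$ to its minimal prefix still having total time exceeding $2T_0$, we may assume $\sum_\ell t_\ell\in(2T_0,3T_0)$.

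Suppose for contradiction that for some sequence $\delta_A^{(n)}\to 0^+$ there exist GR words $J^{(n)}$ of this form, with segment times $t_\ell^{(n)}\in(0,T_0)$ and realizing points $p_\ell^{(n)}\in S_K^*X^\circ$ (the two versions near each junction lying within $\delta_A^{(n)}$ of one another). We wish to pass to the limit in $n$ to obtain an unbroken geometric geodesic of length $\geq 2T_0$ starting in $S_K^*X^\circ$. The main obstacle is that the number $k^{(n)}+1$ of segments is not a priori bounded. To control this, distinguish \emph{cone-interacting} from \emph{smooth} (cone-avoiding) segments: cone-interacting segments have length at least $\delta_\psi/2$, since each $p_\ell$ sits at distance $\geq\delta_\psi/4$ from every cone point (being outside the $\psi_\alpha$ cutoffs), so at most $6T_0/\delta_\psi$ such segments appear. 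A maximal block of consecutive smooth segments is a broken smooth geodesic with jumps of size $<\delta_A^{(n)}$ at the junctions; by uniform continuity of the smooth geodesic flow on compact subsets of $X^\circ$, such a block converges as $n\to\infty$ to a single unbroken smooth geodesic arc. After relabeling, we may therefore assume the number $M$ of essential segments is uniformly bounded.

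Passing to subsequences using the compactness of $S_K^*X$ and $[0,T_0]$, the realizing data converge to limits $p_\ell^*\in S_K^*X$ and $t_\ell^*\in[0,T_0]$; since the junction jumps have size $<\delta_A^{(n)}\to 0$, the two versions of each $p_\ell^*$ coincide. By Proposition~\ref{proposition:closed}, each limiting pair satisfies $p_{\ell+1}^*\gtilde[t_\ell^*]p_\ell^*$, and concatenating these yields an honest geometric geodesic from $p_0^*\in S_K^*X^\circ$ to $p_M^*\in S_K^*X^\circ$ of total length $\sum_\ell t_\ell^*\geq 2T_0>T_0$. Since this geodesic has not left $\Omega$ by time exceeding $T_0$, Assumption~\ref{assumption:escape} is directly contradicted, completing the proof.
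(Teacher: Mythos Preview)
Your approach is the same compactness-plus-contradiction strategy as the paper's, and you carry it out in far more detail than the paper's two-line sketch (which simply says that ``with fixed time intervals, in the limit there must be a geometric geodesic of length $2T_0$ that remains in $\Omega$''). Your observations that each $t_\ell<T_0$, that one may truncate to $\sum t_\ell\in(2T_0,3T_0)$, and that cone-interacting segments have length $\geq\delta_\psi/2$ (hence are bounded in number) are all correct and useful.

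The genuine gap is in the ``smooth block consolidation.'' You claim that a maximal run of cone-avoiding segments---a broken geodesic in $X^\circ$ with jumps of size $<\delta_A^{(n)}$ at the junctions---converges as $n\to\infty$ to a single unbroken geodesic arc, citing uniform continuity of the flow. But uniform continuity (or a Lipschitz/Gronwall bound) only controls the deviation from the true geodesic after a \emph{single} perturbation; with $m^{(n)}$ jumps the accumulated error is of order $m^{(n)}\,\delta_A^{(n)}\,e^{LT}$, and nothing in your setup forces $m^{(n)}\delta_A^{(n)}\to 0$. When it does not, the limit need not be a geodesic at all. Concretely, take a regular $m$-gon inscribed in a small circle lying in a Euclidean portion of $K$: each side is a genuine geodesic segment, the velocity jump at each vertex is $\sim 2\pi/m$, and for $m$ on the order of $\delta_A^{-1}$ both the incoming and outgoing covectors at each vertex sit in a common $\WF' A_{j_\ell}$. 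This is a GR word of arbitrarily large total time whose ``smooth block'' limits to a circle, not a line. So the sentence ``after relabeling, we may therefore assume the number $M$ of essential segments is uniformly bounded'' is not justified as written. The paper's proof sidesteps this by tacitly working with fixed $k$ and fixed time intervals (and only bounded $k$ is ever used downstream); your more ambitious attempt to treat unbounded $k$ would need a genuinely different mechanism to control the number of smooth segments, or an acknowledgment that the statement is only being proved for bounded $k$.
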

\begin{proof} Shrinking $\delta_A=2^{-m}\downarrow 0,$ if there are GR words with
  fixed time intervals, in the limit there must be a geometric
  geodesic of length $2T_0$ that remains in $\Omega,$ which is ruled
  out by assumption.
\end{proof}

We also have the following, more granular, reformulation of
Assumption~\ref{assumption:collinear}, which we will need in what follows:
\begin{lemma}\label{lemma:collinear}
If $\delta_A$ is sufficiently small, there do not exists words of the form $ijk\ell,$
where $jk$ is GR and $ij,$ $jk,$ and $k\ell$ all interact with cone points.
\end{lemma}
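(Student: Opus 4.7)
The plan is to argue by contradiction using a compactness/limiting argument analogous to that of the preceding lemma, but now exploiting the closedness of both the geometric and diffractive relations (Proposition~\ref{proposition:closed}). Suppose that for some sequence $\delta_A^{(m)}\downarrow 0$ there exist corresponding words $ijk\ell$ of the form described. By finiteness of the cover and of the set of cone points, after passing to a subsequence we may assume that the letters $i,j,k,\ell$ are fixed and that each of the three successive interactions $ij$, $jk$, $k\ell$ passes through a fixed cone point, respectively $Y_\alpha, Y_\beta, Y_\gamma$; here $Y_\alpha$ may be chosen to be the \emph{last} cone visited in the $ij$-interaction and $Y_\gamma$ the \emph{first} cone visited in the $k\ell$-interaction. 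For each $m$ select realizing points $p_0^{(m)} \in \WF' A_i$, $p_1^{(m)}, p_1'^{(m)} \in \WF' A_j$, $p_2^{(m)}, p_2'^{(m)} \in \WF' A_k$, $p_3^{(m)} \in \WF' A_\ell$.

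Applying the preceding lemma to the GR subword $jk$ yields the uniform bound $t_1^{(m)} \leq 2T_0$. The smooth geodesic segments running from $Y_\alpha$ directly to $p_1^{(m)}$ and from $p_2^{(m)}$ directly to $Y_\gamma$ are of uniformly bounded length, since $p_1^{(m)}, p_2^{(m)}$ lie in the fixed compact set $K$. Passing to a further subsequence and invoking compactness of $S^*X$ over $K$, we obtain limits $p_0, p_1, p_2, p_3 \in S^*X$; since the diameters of $\WF' A_j$ and $\WF' A_k$ shrink to zero, one has $p_1^{(m)}, p_1'^{(m)} \to p_1$ and $p_2^{(m)}, p_2'^{(m)} \to p_2$.

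The crucial observation is now that the limit point $p_1 \in S^*X$ encodes a single definite direction. By Proposition~\ref{proposition:closed}, the limiting geodesic segment arriving at $p_1$ from $Y_\alpha$ (from the $\dtilde[t_0]$ relation on $ij$) and the limiting geodesic leaving $p_1$ toward $Y_\beta$ (from the $\gtilde[t_1]$ relation on $jk$) must have matching direction at $p_1$, and hence splice together into a single unbroken geodesic from $Y_\alpha$ through $p_1$ to $Y_\beta$. The analogous argument at $p_2$ gives a single unbroken geodesic from $Y_\beta$ through $p_2$ to $Y_\gamma$. The geometric continuation across $Y_\beta$ built into the GR condition on $jk$ then glues these two segments into a single geometric geodesic passing in succession through the three cone points $Y_\alpha, Y_\beta, Y_\gamma$; provided these are pairwise distinct, this directly contradicts Assumption~\ref{assumption:collinear}.

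The main obstacle is to verify that the three limiting cone points are genuinely distinct, so that Assumption~\ref{assumption:collinear} applies. A coincidence such as $Y_\alpha = Y_\beta$ would produce a geometric geodesic loop passing through $p_1$ and returning to a single cone, which is not directly excluded by the statement of Assumption~\ref{assumption:collinear}. Such degenerate cases should be handled by choosing distinct cones whenever possible during the subsequence extraction (using finiteness of the set of cone points) and by invoking the non-trapping Assumption~\ref{assumption:escape} together with the separation of $\supp \psi_\alpha$ from $\supp A_j$ to rule out the requisite loop geometry in the limit.
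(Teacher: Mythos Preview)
Your approach is essentially the same as the paper's: both argue by contradiction, letting $\delta_A\downarrow 0$, extracting three geodesic segments (one emanating from a cone point, one interacting geometrically with a cone point, one terminating at a cone point) whose endpoints are within $\delta_A$ of each other, and then using compactness together with Proposition~\ref{proposition:closed} to produce a limiting geometric geodesic hitting three cone points, contradicting Assumption~\ref{assumption:collinear}. The paper's version is simply terser---it packages the data as three curves $\gamma_0^m,\gamma_1^m,\gamma_2^m$ and passes directly to the limit without your explicit bookkeeping of realizing points and time bounds.

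Two minor remarks. First, your phrase ``the letters $i,j,k,\ell$ are fixed'' is not quite right, since the partition $\{A_j\}$ itself changes (and grows) as $\delta_A\to 0$; what you actually use, and what the paper uses, is just compactness of the realizing points in $\Sbstar X$ over $K$ and of the boundary $\partial X$. Second, on the distinctness issue you raise at the end: the paper's own proof does not address this at all. It simply asserts that the limit ``contradicts Assumption~\ref{assumption:collinear},'' evidently reading that assumption as forbidding a geometric geodesic with three cone-point interactions rather than three \emph{distinct} cone points. So your concern, while reasonable given the informal phrasing ``no three cone points are collinear'' in the introduction, is not treated as an obstacle in the paper, and you need not treat it as one either.
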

\begin{proof}
  If the result fails, then taking $\delta_A=2^{-m}$ gives a family of
  broken geodesics given by the concatenation of $\gamma_0^m,$
  $\gamma_1^m,$ $\gamma_2^m$ where \begin{itemize} \item $\gamma_0^m$ starts
    and $\gamma_2^m$ ends at a cone point, \item The end of $\gamma_i^m$
    and the start of $\gamma_{i+1}^m$ are within distance $2^{-m}$ of
    each other in $S^*X^\circ,$ \item $\gamma_1^m$ undergoes geometric
    interaction with a cone point.\end{itemize} Then taking $m \to
  \infty$ would yield (by compactness of $\pa X$ and Proposition~\ref{proposition:closed}) a
  limiting geodesic passing through three cone points, interacting
  geometrically with the one in the middle; this contradicts
  Assumption~\ref{assumption:collinear}.
\end{proof}

The crucial ingredient in the proof of Theorem~\ref{theorem:Huygens}
is the following lemma which shows that the propagator eventually
locally smooths data microsupported in $\WF'A_m$ by $(n-1)/2-0$
derivatives.  We cannot quite just add together all these terms for
differing $m$ and conclude the theorem, as that still leaves
singularities starting in $\supp \psi_\alpha$ to be dealt with;
however, we will see later on that the singularities near the cone
points in these latter terms can be moved away from the cone points by
applying the propagator for short time.

In order to make iterative arguments, it is convenient to keep track
explicitly of singularities that leave $\Omega$ never to return: we
let $\outgoing$ (for ``outgoing'') denote the subset 
$$
\outgoing=\big\{(t,z,\tau,\zeta) \in T^*(M\backslash \Omega): \ang{z,\zeta/\tau}>0\big\}.
$$
(Here we have abused notation by identifying $X\backslash \Omega$ with
a subset of Euclidean space to which it is isometric.)  The set
$\outgoing$ is mapped to itself by positive time geodesic flow;
moreover, any bicharacteristic starting in $\supp \chi$ that escapes
$\Omega$ lies in $\outgoing$ over $X\backslash \Omega.$  We let
$L^2\, H^s(\outgoing)$ denote the space of distributions that are
microsupported in $\outgoing$ (hence in $\dom_\infty$ on $K$) and lie
in $L^2\, H^s$ (where as before we use the notation $L^2\, H^s$ for
$L^2([0, \widetilde{T}]; H^s)$).  Let $L^2 \,
\energy_s(\outgoing)=L^2\, (H^s(\outgoing)\oplus H^{s-1}(\outgoing)).$

\begin{lemma}\label{lemma:awayfromcone}
There exist $\delta_A,\delta_\psi$ sufficiently small that for each
$m$ and $s,$ for all $t>5T_0,$
$$
U(t) A_m: \energy_s \to L^2 \, \energy_{s+(n-1)/2-0}+L^2\, \energy_s(\outgoing).
$$
\end{lemma}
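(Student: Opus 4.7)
My plan is to decompose $U(t)A_m$ as a finite sum of propagator strings $T_J$ by inserting the partition $\Id = \sum_j A_j + \sum_\alpha \psi_\alpha + \Upsilon + R$, $R \in \residual$, at intermediate times in $U(t)$, and then to classify each string. Strings with non-DR words are residual by the diffractive propagation of singularities, and absorb into the target space trivially. Strings whose word routes through $\Upsilon$ at a time when the corresponding bicharacteristic segment has truly left $\Omega$ yield a piece microsupported in $\outgoing$, since the Euclidean geometry outside $\Omega$ carries outgoing rays to infinity without return; these contribute to $L^2\energy_s(\outgoing)$.

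The remaining terms correspond to DR words whose bicharacteristics stay in $\Omega \cup \bigcup_\alpha \supp\psi_\alpha$ throughout the interval $[0,t]$, with $t > 5T_0$. By Assumption~\ref{assumption:escape}, every maximal GR subword has duration at most $2T_0$, so within a DR word of total duration $>5T_0$ there must be at least one cone-point transition that is diffractive but not geometric. At such a transition at cone point $Y_\alpha$, I would apply Proposition~\ref{proposition:MW}(2): provided the incoming wave is nonfocusing relative to $H^{s+(n-1)/2}$ with respect to $\flowout_\alpha$, the outgoing diffracted wave is microlocally in $H^{s+(n-1)/2-0}$ and carries coisotropic regularity relative to that space, which is precisely the gain landing us in $L^2\energy_{s+(n-1)/2-0}$.

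The crux is supplying the nonfocusing hypothesis. When the diffractive transition at $Y_\alpha$ is preceded by an earlier cone-point event at a \emph{different} cone point $Y_\beta$, the outgoing wave from that earlier event carries coisotropic regularity with respect to $\flowout_\beta$ (this being the conormal structure of the diffracted wave from \cite{Melrose-Wunsch:cone}). Assumption~\ref{assumption:conjugate} guarantees that $\flowout_\beta$ and $\flowout_\alpha$ intersect transversely in the sense required by Proposition~\ref{proposition:nonfocusingholds}, which then converts coisotropic regularity relative to $H^s$ into nonfocusing relative to $H^{s+(n-1)/2-0}$. Thus the $(n-1)/2-0$ gain is delivered at the second cone-point event. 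The time $5T_0$ enters as exactly enough to guarantee three independent ``slots'' in any non-outgoing DR word: an initial free-propagation leg out of $\supp A_m$, a first cone-point interaction producing the coisotropic structure, and a second diffractive transition at a different cone point where the Melrose--Wunsch smoothing fires. Lemma~\ref{lemma:collinear}, via Assumption~\ref{assumption:collinear}, rules out the degenerate purely-geometric cascades that would spoil this counting. The principal obstacle is precisely this bookkeeping: one must show, for every non-outgoing DR word of duration $>5T_0$, that a diffractive transition at one cone point is preceded by a cone-point event at a distinct cone point supplying the transverse coisotropic structure, and that the single resulting application of Proposition~\ref{proposition:MW}(2) produces a gain that survives any subsequent geometric continuations or further diffractive events along the word.
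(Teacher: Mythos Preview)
Your overall plan matches the paper's: decompose into strings $T_J$, discard non-DR words, route escaping strings into $\outgoing$, and on the remaining strings use Proposition~\ref{proposition:MW}(2) together with Proposition~\ref{proposition:nonfocusingholds} to gain $(n-1)/2-0$ derivatives. The ingredients you list are exactly right.

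There is, however, a genuine gap in the mechanism. You write that the diffractive transition at $Y_\alpha$ is ``preceded by an earlier cone-point event at a different cone point $Y_\beta$'' which ``carries coisotropic regularity with respect to $\flowout_\beta$.'' But an arbitrary cone-point interaction does \emph{not} produce coisotropic regularity: only a \emph{strictly diffractive} one does, via Proposition~\ref{proposition:MW}(2) (which requires that no singularity arrive along geometrically-related incoming rays). A geometric (G) interaction simply transmits the $H^s$ singularity without creating the conormal/coisotropic front. So the smoothing step needs \emph{two} D interactions, the first to manufacture coisotropic regularity relative to $H^{s-0}$, the second (at a transversally-intersecting flowout, by Assumption~\ref{assumption:conjugate}) to convert it to the $(n-1)/2-0$ gain. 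Your non-trapping count (``maximal GR subword has duration $\le 2T_0$'') secures only one D in a word of length $>5T_0$; it does not secure two, and in particular not two \emph{consecutive} ones.

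This is precisely why the paper does not argue abstractly about ``at least one D'' but instead forces exactly three cone-point interactions (by choosing the intermediate times $t_0,t_1,t_2$ adaptively, each just past a cone-point hit) and then runs a case analysis on the $2^3$ patterns of G's and D's. Words with two consecutive D's (GDD, DDG, DDD) smooth by the two-step mechanism above. Words with G in the middle (DGD, GGD, DGG, GGG) are ruled out by Lemma~\ref{lemma:collinear}. That leaves GDG, with a single D; this string \emph{cannot} be smoothed, and one instead argues that the final G segment, followed by the remaining time $5T_0-t_0-t_1-t_2>T_0$, must escape to $\outgoing$ (otherwise Lemma~\ref{lemma:collinear} is again violated). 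Your proposal does not isolate or handle this GDG case, and the sentence ``a single resulting application of Proposition~\ref{proposition:MW}(2) produces a gain'' is not available there. Filling this in amounts to reproducing the paper's case split.
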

Since distributions in $L^2\, \energy_s(\outgoing)$ are smooth on $\Omega$
this implies in particular that $\chi U(t) A_m: \energy_s \to \energy_{s+(n-1)/2-0}.$
\begin{proof}
  If all diffractive bicharacteristics starting in $A_m$ escape to
  $\outgoing$ in time less than $5T_0,$ then the result holds, by
  propagation of singularities.  If not, some bicharacteristic must
  hit a cone point within time $T_0$---none can do so in longer time,
  as otherwise it would be a (trivially) geometric bicharacteristic
  remaining in $\Omega$ for time $T_0,$ contradicting
  Assumption~\ref{assumption:escape}.  We now let $s_0$ be the time
  at which the first cone point is reached under the flow.  In time
  $s_0+3\delta_\psi,$ then this particular bicharacteristic is at
  least distance $2\delta_\psi$ from the boundary; hence if $\delta_A$
  is small enough then taking $t_0 =s_0+3\delta_\psi$ we find that any
  singularity starting within distance $\delta_A$ of this one is
  propagated by $U(t_0)$ to lie at distance greater than $\delta_\psi$
  from the boundary, by propagation of singularities.  (Note that we
  choose $\delta_A$ small enough that bicharacteristics of length
  close to $t_0$ starting from $\WF'A_m$ interact with only a single
  cone point.)  Thus, \emph{either} $U(5T_0) A_m$ has range in
  $\energy_s(\outgoing)$ \emph{or} there exists $t_0<T_0$ such that:
$$
U(t_0)A_m=\sum_\ell A_\ell U(t_0) A_m \bmod \residual.
$$
Now some of the words $\ell m$ are DR in time $t_0$ and others (most)
are not.  Those that are not give smoothing terms, so we discard
them.  For those that are, we repeat the construction above, twice,
starting in $\WF A_\ell$ and $\WF A_k$ successively instead of in
$\WF' A_m.$  We may thus write $U(5 T_0) A_m$ as a sum of terms
$$
U(5 T_0) A_m =\sum U(5 T_0-t_0-t_1-t_2) A_j U(t_2) A_k U(t_1) A_\ell U(t_0) A_m+E+R
$$
where all words $jk\ell m$ are DR, and where
$$
E:\energy_{s}\to \energy_s(\outgoing),\ R \in \residual.
$$  The choices of $t_1$ and
$t_2$ in the sum depend on $k$ and $\ell$ just as our choice of $t_0$
depended on how long it took a bicharacteristic in starting in $\WF'
A_m$ to hit a cone point.  Since this dependence is not relevant in
what follows, however, we suppress it in the notation.  Note that each
$t_i$ is less than $T_0,$ as otherwise the bicharacteristics must have
escaped to $\outgoing$ rather than interacting with another cone point.

Now in each word $jk\ell m$ associated with an element of the sum
there are three interactions with cone points.  Some of these
interactions are GR, and some are not.  We encode this by associating
a string of G's and D's to a word, so for instance a diffractive
interaction followed by two geometric gives (reading right to left)
the string $GGD.$ We break our sum into pieces based on this
classification.

For a word containing two successive $D$'s, i.e.\ $GDD,$ $DDG$, or
$DDD,$ we claim that the propagator maps $\energy_s \to
\energy_{s+(n-1)/2-0}.$ The proof is as follows: given initial data in
$\energy_s,$ Proposition~\ref{proposition:MW} tells us that the
first diffractive interaction
results in a solution $U(t) A_m$ that is coisotropic
with respect to $\flowout_\alpha$ (for the relevant cone point
$\alpha$,) relative to $H^{s-0}.$ This distribution then propagates in $T^*M^\circ$
so as to preserve this coisotropic regularity (see Proposition~12.2 of
\cite{Melrose-Wunsch:cone} or Lemma~4.7 of \cite{MVW:edges} from which it
also follows).  Now when the singularities arrive at the next
cone point (say, $Y_\beta$) for the second diffraction, by
Assumption~\ref{assumption:conjugate} (i.e., nonconjugacy) we may apply
Proposition~\ref{proposition:nonfocusingholds} to conclude that the
solution is nonfocusing with respect to $\flowout_\beta$ relative to $H^{s+(n-1)/2-0}.$
Then the
second diffractive interaction puts it in $H^{s+(n-1)/2-0},$ by a second
application of Proposition~\ref{proposition:MW}.

By contrast a word containing $G$ in the middle, i.e., $DGD,$ $GGD,$ $DGG,$ or
$GGG$ cannot be realizable by Lemma~\ref{lemma:collinear}.

This leaves words of the form $GDG$ as the only remaining summands to
treat.  A geometric geodesic passing starting in $\WF'A_j,$ traveling
for time $t_2,$ reaching $\WF' A_i$, and then traveling for time $5
T_0-t_0-t_1-t_2$ must reach $\outgoing$ since the only other option is
for it to reach another cone point, which again would contradict Lemma~\ref{lemma:collinear}.  Thus $U(5 T_0-t_0-t_1-t_2) A_j U(t_2) A_k$
must map singularities to $\outgoing.$

Thus, we have established that every term in the sum representing
$U(5T_0) A_m$ either maps singularities to $\outgoing$ or smooths
them by $(n-1)/2-0$ derivatives.
\end{proof}

\section{Weak non-trapping of singularities}
In this section we prove Theorem~\ref{theorem:Huygens}, which tells us that
weak non-trapping of singularities holds.

The proof consists of two steps.  To start, we will prove the theorem when $s=(n-1)/2-0.$  To accomplish this, we apply 
Lemma~\ref{lemma:awayfromcone} as follows.  We decompose
\begin{equation}\label{decomposition}
\chi U(t) \chi = \sum_j \chi U(t) \chi A_j + \sum_\alpha \chi U(t) \chi \psi_\alpha.
\end{equation}
Then for $t>5T_0$ the first sum has the desired mapping property by
Lemma~\ref{lemma:awayfromcone}
since
$\WF' \chi A_j \subset \WF' A_j$.  We
deal with the second as follows: pick $\tau>2\delta_\psi,$ and smaller
than $L/50$ (recall that $L$ is the minimum distance between
cone points).
By propagation of singularities, if $\WF v \subset \WF' \psi_\alpha$ then 
$$
\psi_\beta U(\tau) v \in \energy_\infty \text{ for all }\beta,
$$
hence
$$
U(\tau) v-\sum_{j} A_j U(\tau) v \in L^2\, \energy_\infty+L^2 \energy_s(\outgoing).
$$
Thus, we may rewrite the second sum in \eqref{decomposition} as
$$
\sum_{\alpha} \sum_j \chi U(t-\tau)\big( A_j U(\tau)\chi
\psi_\alpha\big) \bmod \residual,
$$
and again applying Lemma~\ref{lemma:awayfromcone}, this time with
$t>\tau+5 T_0,$ shows that these terms, too, enjoy the desired mapping
properties.  This concludes the first step in the proof.

To finish the proof, we need to show that further smoothing occurs as
time evolves.  To this end, note that we may iterate the result
obtained above as follows.  Given $f=(f_0,f_1) \in \energy_s$ we 
choose $\chi_1 \in \CcI(X)$ equal to $1$ on $K$ and
split
$U(5T_0) \chi f=\chi_1 U(5T_0)
\chi f+(1-\chi_1) U(5T_0) \chi f$ so that for $t>5T_0$
$$
U(t)\chi f = u+v
$$
where
$$
u = U(t-5T_0) \chi_1 U(5 T_0)\chi f,\ v = U(t-5T_0) (1-\chi_1) U(5 T_0)\chi f.
$$
Thus by our previously established results, if $\chi_1$ is chosen with
support sufficiently close to $K$, $u \in L^2([5T_0, \widetilde{T}];
\energy_{s+(n-1)/2-0})$ and $v \in L^2([5T_0, \widetilde{T}];
\energy_{s}(\outgoing)).$   For $t>10 T_0$ we now employ the
smoothing result established above with $$u(5 T_0) =\chi_1 U(5T_0)\chi f\in \energy_{s+(n-1)/2-0}$$ now
functioning as our initial data (and the previous localizer $\chi$ replaced by $\chi_1$) to obtain
\begin{equation}
\begin{aligned}
u(t) &=U(t-5T_0) u(5T_0)\\
&\in L^2([10T_0, \widetilde{T}]; \energy_{s+2(n-1)/2-0})+ L^2([10T_0, \widetilde{T}]; \energy_{s+(n-1)/2-0}(\outgoing)).
\end{aligned}
\end{equation}
Hence overall $$U(t)\chi f \in  L^2([10T_0, \widetilde{T}]; \energy_{s+2(n-1)/2-0})+ L^2([10T_0, \widetilde{T}]; \energy_{s}(\outgoing)).$$
Further iteration of this argument now yields smoothing by $k (n-1)/2-0$ derivatives
after time $5k T_0.$\qed

\section{Exterior polygonal domains}
\label{sec:exter-polyg-doma}

In this section we show that the very weak Huygens' principle of
Theorem~\ref{theorem:Huygens} also holds for the wave equation exterior to
a non-trapping polygonal obstacle with Dirichlet or Neumann boundary
conditions.  In particular, we suppose that $\Omega\subset \reals^{2}$ is a
compact region with piecewise linear boundary.  We further suppose that the
complement $\reals^{2}\setminus \Omega$ is connected, that no three
vertices of $\overline{\Omega}$ are collinear, and that
$\reals^{2}\setminus \Omega$ is non-trapping in the sense that the doubling
described next satisfies Assumption~\ref{assumption:escape}.  This
assumption is generically equivalent to the requirement that all billiard
trajectories missing the vertices escape to infinity in some uniform time.

We now form a manifold $X$ by gluing two copies of
$\reals^{2}\setminus \Omega$ along their boundaries.  This process
yields a Euclidean surface $(X,g)$ with conic singularities satisfying
the assumptions of Section~\ref{section:assumptions} (but with two
Euclidean ends).  As its proof goes through verbatim for a manifold
with two Euclidean ends rather than one, Theorem~\ref{theorem:Huygens}
then holds for $(X,g)$.

Suppose now that $\Lap$ is the Dirichlet or Neumann extension of the
Laplacian on $\reals^{2}\setminus \Omega$.\footnote{By an observation of
Blair, Ford, Herr, and Marzuola~\cite{BFHM}, both the Dirichlet and
Neumann Laplacians are taken to the Friedrichs extension of the
Laplacian on the conic doubled manifold.}  The method of images then
shows that Theorem~\ref{theorem:Huygens} holds for
$\reals^{2}\setminus \Omega$.  Indeed, by solving the wave equation on
the double $X$ and then summing (respectively, taking the difference)
over the two copies one obtains a solution for the wave equation with
the Neumann (respectively, Dirichlet) extension of the Laplacian on
$\reals^{2}\setminus \Omega$.

\section{From weak non-trapping to exponential decay}\label{section:Vainberg}

In this section, we recapitulate the argument of Vainberg
\cite{Vainberg} as repackaged by Tang--Zworski~\cite{TZ} in the
setting of \emph{weak} non-trapping of singularities in order to
deduce our resolvent estimate Theorem~\ref{theorem:resest} and hence
exponential energy decay for the wave equation in odd dimensions and
the resonance wave expansion (Corollary~\ref{corollary:resexp}).

Having established that the weak non-trapping of singularities holds both in
the settings of manifolds with cone points and of exterior domains to polygons, it will behoove us to
adopt a formalism for passing from this property to resolvent estimates
that will simultaneously apply in both cases.  For this reason we now adopt
the ``black-box'' formalism as used in \cite{TZ}.

\subsection{Preliminaries}
\label{sec:preliminaries}

We start by recalling the framework of ``black box'' scattering from
Sj{\"o}strand--Zworski~\cite{SZ}, as used by Tang--Zworski~\cite{TZ}.  The
following presentation follows \cite{TZ} closely.

We consider a complex Hilbert space
$\mathcal{H}$ with an orthogonal decomposition
\begin{equation*}
  \mathcal{H} = \mathcal{H}_{R_{0}} \oplus L^{2}\left(\reals^{n}\setminus B(0,R_{0})\right),
\end{equation*}
where $R_{0}>0$ is fixed.  We assume that $P$ is a self-adjoint
operator, $P: \mathcal{H} \to \mathcal{H}$, with domain $\mathcal{D}
\subset \mathcal{H}$, satisfying the following conditions:
\begin{align*}
  \mathbbm{1}_{\reals^{n}\setminus B(0, R_{0})}\mathcal{D} &=
  H^{2}(\reals^{n}\setminus B(0, R_{0})), \\
  \mathbbm{1}_{\reals^{n}\setminus B(0,R_{0})}P &= \Lap
  |_{\reals^{n}\setminus B(0,R_{0})} \\
  (P + i )^{-1} &\text{ is compact} \\
  P \geq -C, &\quad C \geq 0.
\end{align*}

Under the above conditions, it is known that the resolvent $R(\lambda)
= (P-\lambda^{2})^{-1}: \mathcal{H}\to \mathcal{D}$ meromorphically
continues from $\{ \lambda : \Im \lambda > 0, \lambda^{2} \notin
\sigma(P)\}$ to the whole complex plane $\complexes$ when $n$ is odd
or to the logarithmic plane $\Lambda$ when $n$ is even, as an operator
from $\mathcal{H}_{\text{comp}}$ to $\mathcal{D}_{\text{loc}}$ with
poles of finite rank.  We denote by $\mathcal{D}_{s}$ the spaces given
by $(P+i)^{-s/2}\hilbert$.

The poles (i.e., the resonances of $P$) wil be denoted $\Res (P);$ we count
them with multiplicity, denoted $m(\lambda)$.  We require an additional condition on $P$
to guarantee a polynomial bound on the resonance counting function.  

To formulate the additional condition, we use $P$ to construct a
self-adjoint reference operator $P^{\#}$ on
\begin{equation*}
  \mathcal{H}^{\#} = \mathcal{H}_{R_{0}} \oplus L^{2}\left( M
    \setminus B(0,R_{0})\right)
\end{equation*}
as in~\cite{SZ} by gluing the ``black box'' into a large torus instead of
Euclidean space; here $M = \left( \reals / R\integers\right)^{n}$ for some $R >
R_{0}$.  Let $N(P,I)$ denote the number of eigenvalues of $P^{\#}$ in
the interval $I$.  The assumption we need is then
\begin{equation}\label{blackboxassumption}
  N(P^{\#}, [-C,\lambda]) = O(\lambda^{n^{\#}/2}), \quad \lambda \geq 1
\end{equation}
for some number $n^{\#}\geq n$.\footnote{This implies (see~\cite{SZ} and the
references of~\cite{TZ}) that the resonance counting function $N(r)$
satisfies
\begin{equation*}
  N(r) = \sum_{\substack{\lambda \in \Res (P) \\ |\lambda| \leq r, \arg \lambda
    < \theta}} m(\lambda) \leq C_{\theta}r^{n^{\#}}.
\end{equation*}}

In addition we have a polynomial bound for the logarithm of the norm
of the cutoff resolvent in the whole complex plane away from the
resonance set in odd dimensions and in neighborhoods of the real axis
in even dimensions: for any $\chi \in C^{\infty}_{c}(\reals^{n})$,
$\chi =1$ near $B(0,R_{0})$,
\begin{equation*}
  \norm[\mathcal{H}\to\mathcal{H}]{\chi R(\lambda)\chi} \leq
  C_{\theta}e^{C_{\theta}|\lambda|^{n^{\#}+\epsilon}}, \quad \lambda
  \in \{ \lambda : \arg \lambda < \theta\} \setminus
  \bigcup_{\lambda_{j}\in \Res (p)} B(\lambda_{j}, \langle
  \lambda_{j}\rangle ^{-n^{\#}-\epsilon}).
\end{equation*}
Note that the condition \eqref{blackboxassumption} is satisfied for
the polygonal exteriors of Section~\ref{sec:exter-polyg-doma} as well
as for conic manifolds, with $\mathcal{H}_{R_0}$ taken to be $L^2(K)$
in either case, and $\mathcal{D}$ the domain of the square root of the
appropriate Laplace operator (i.e., with boundary conditions in the
polygonal case and simply the Friedrichs extension in the conic case).

The wave group of a black box perturbation can be defined abstractly
as in Proposition~2.1 of Sj{\"o}strand--Zworski~\cite{SZ2}:
\begin{equation*}
  U(t) = \exp  i t
    \begin{pmatrix}
      0 & I \\ P & 0
    \end{pmatrix}.
\end{equation*}
The entries of the matrix representation of $U(t)$ are
\begin{equation*}
  U(t) =
i  \begin{pmatrix}
    D_{t}\mathcal{U}(t) & \mathcal{U}(t) \\ D_{t}^{2}\mathcal{U}(t) & D_{t}\mathcal{U}(t)
  \end{pmatrix},
\end{equation*}
where the strongly continuous family of operators $\mathcal{U}(t):
\mathcal{D}_{s} \to \mathcal{D}_{s+1}$ can be identified as the
solution operator of the following initial value problem:
\begin{align*}
  \left( D_{t}^{2} - P\right) \mathcal{U}(t)g &= 0 \text{ for
  }t\in\reals \\
  \mathcal{U}(0)g &= 0 \\
  \pa_t \mathcal{U}(0)g &= g
\end{align*}
where $g\in \mathcal{H}$.  In other words,
\begin{equation*}
  \mathcal{U}(t) =  \frac{\sin t\sqrt{P}}{\sqrt{P}}.
\end{equation*}
Since $\mathbbm{1}_{\reals^{n}\setminus
  B(0,R_{0})}\mathcal{U}(t)\mathbbm{1}_{\reals^{n}\setminus
  B(0,R_{0})}$ maps Schwartz functions to tempered distributions we
can describe it by its Schwartz kernel $\mathcal{U}(t,x,y)$, which is
a distribution in on $\reals \times (\reals^{n}\setminus
B(0,R_{0}))\times (\reals^{n}\setminus B(0,R_{0}))$.

\subsection{The resolvent estimate}
\label{sec:resolvent-estimate}

We work in the framework of black-box scattering described in
Section~\ref{sec:preliminaries}.  As above, we use the notation $\Ut$
to represent the sine wave propagator $\sin \left( t
  \sqrt{P}\right)/\sqrt{P}$.  In what follows we use
$\mathcal{F}^{-1}$ to denote the inverse Fourier transform:
\begin{equation*}
  \mathcal{F}^{-1}_{t\to\lambda} f (\lambda) = \int _{\reals}e^{it\lambda}f(t)\,dt
\end{equation*}
We use the usual notation in which $\check{f}$ denotes
$\mathcal{F}^{-1}f$.

The main result of this section is an adaptation of an argument of
Vainberg~\cite{Vainberg} as repackaged by Tang--Zworski~\cite{TZ}.  It
states that the very weak Huygens' principle of
Theorem~\ref{theorem:Huygens} implies the resolvent bounds of
Theorem~\ref{theorem:resest}.
\begin{proposition}
  \label{prop:vainberg}
  Suppose that $P$ is a black-perturbation for which the  very weak
  Huygens' principle of Theorem~\ref{theorem:Huygens} holds.  Then
  there exists a $\delta > 0$ such that the cut-off resolvent
  \begin{equation*}
    \chi \left( P - \lambda^{2}\right)^{-1} \chi
  \end{equation*}
  can be analytically continued from $\Im \lambda > 0$ to the region
  \begin{equation*}
    \Im \lambda > - \delta \log \Re \lambda , \quad \Re \lambda >
    \delta ^{-1}
  \end{equation*}
  and for some $C, T> 0$ enjoys the estimate
  \begin{equation*}
    \norm[\mathcal{H}\to \mathcal{H}]{\chi \left(P - \lambda^{2}\right)^{-1}\chi}
    \leq C |\lambda|^{-1} e^{T|\Im \lambda|}
  \end{equation*}
  in this region.
\end{proposition}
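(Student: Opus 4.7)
The plan is to follow the Vainberg--Tang--Zworski strategy~\cite{Vainberg,TZ}, adapted to the very weak Huygens principle of Theorem~\ref{theorem:Huygens}. The starting point is the Laplace-transform representation of the cut-off resolvent in the physical half-plane: for $\Im\lambda > 0$,
\begin{equation*}
\chi (P-\lambda^{2})^{-1} \chi = i \int_{0}^{\infty} e^{i\lambda t} \chi \Ut \chi\, dt.
\end{equation*}
Fix $k \in \NN$ large (to be chosen in terms of $n^{\#}$), and invoke Theorem~\ref{theorem:Huygens} to produce a time $T$ such that $\chi \Ut \chi$ gains $2k$ derivatives of regularity for $t > T$. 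Introduce $\rho \in \CI(\RR)$ supported in $[T/2, \infty)$ with $\rho \equiv 1$ on $[T, \infty)$, and decompose $\chi R(\lambda)\chi = A(\lambda) + B(\lambda)$, where $A$ is the integral against $1-\rho$ and $B$ the integral against $\rho$.

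The piece $A(\lambda)$ has integrand compactly supported in time and so extends to an entire function of $\lambda$; integrating by parts once in $t$ and using the standard propagation bound for $\Ut$ yields $\norm[\hilbert\to\hilbert]{A(\lambda)} \leq C|\lambda|^{-1}e^{T|\Im \lambda|}$. The more delicate term is $B(\lambda)$. Here one integrates by parts in $t$ repeatedly, using $(D_{t}^{2} - P)\Ut = 0$ to trade pairs of $t$-derivatives for factors of $P$ acting on $\Ut$. Because $\chi \Ut \chi$ smooths by $2k$ derivatives on $\supp \rho$, these powers of $P$ are absorbed, giving the meromorphic continuation of $B(\lambda)$ from $\Im\lambda > 0$ to $\CC$ (or to $\Lambda$ in even dimensions) as an operator $\hilbert \to \hilbert$, with a polynomial-in-$\lambda$ bound $\norm[\hilbert\to\hilbert]{B(\lambda)} \leq C_{k}\langle\lambda\rangle^{N_{k}} e^{T|\Im\lambda|}$ valid away from small disks about the resonances.

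The final step is the semiclassical maximum-principle argument of~\cite{TZ}. The black-box assumption~\eqref{blackboxassumption} implies a polynomial resonance count $N(r) = O(r^{n^{\#}})$; one then multiplies $\chi R(\lambda)\chi$ by a suitable Blaschke-type product over the resonances in a logarithmic strip to cancel the poles at the cost of a polynomial factor in $|\lambda|$. Combining (i)~the spectral-theorem bound $\norm[\hilbert\to\hilbert]{\chi R(\lambda)\chi} = O(|\lambda|^{-1})$ on a line just above the real axis, (ii)~the polynomial-exponential bound of the previous paragraph throughout the logarithmic strip (away from resonances), and (iii)~a Phragm\'en--Lindel\"of three-lines argument, one extracts the claimed bound $\norm[\hilbert\to\hilbert]{\chi R(\lambda)\chi} \leq C|\lambda|^{-1} e^{T|\Im\lambda|}$ throughout a region $\Im\lambda > -\delta \log \Re\lambda$.

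The main obstacle is the balancing in the maximum-principle step: the factor $e^{T|\Im\lambda|}$ intrinsic to the time cutoff forces the logarithmic aperture $\delta$ small as $T$ grows, while the resonance density $r^{n^{\#}}$ demands the smoothing order $2k$ (and hence $T$) be taken large enough to dominate. In contrast to the non-trapping setting of~\cite{TZ}, where the cut-off wave kernel becomes smooth after a \emph{single} finite time, our very weak Huygens principle furnishes arbitrary smoothing only after times $T_{s}$ which grow with the desired order $s$; the main task is to verify that this trade-off is still favorable enough to close the standard maximum-principle argument.
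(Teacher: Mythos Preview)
There is a genuine gap in your treatment of $B(\lambda)$. Integration by parts in $t$ cannot by itself produce an analytic (or meromorphic) continuation of $B$ to $\Im\lambda<0$: after trading $D_t^{2k}$ for $P^k$, the remaining integral $\int_T^\infty e^{i\lambda t}\,\chi P^{k}\Ut\chi\,dt$ still diverges once $\Im\lambda<0$, because the very weak Huygens principle is a \emph{regularity} statement, not a \emph{decay} statement---it says $\chi\Ut\chi:\hilbert\to\dom_{2k}$ for $t>T$, not that $\norm[\hilbert\to\hilbert]{\chi\Ut\chi}\to0$. You might fall back on the a priori black-box continuation of $\chi R(\lambda)\chi$ to define $B=\chi R\chi-A$, but then the only available bound below the axis is the crude $\exp(C|\lambda|^{n^{\#}+\epsilon})$, and a three-lines argument interpolating this against $O(|\lambda|^{-1})$ on the real axis does not yield a logarithmic resonance-free strip of the stated form.

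The paper avoids this by following Vainberg's parametrix construction rather than a maximum-principle argument. One cuts the propagator by a spacetime function $\zeta$, sets $\tilde R(\lambda)=-i\mathcal{F}^{-1}_{t\to\lambda}(\zeta H(t)\Ut\chi)$, and then corrects by a piece $i\mathcal{F}^{-1}((1-\chi_3)V)$ built from the \emph{free} wave equation on $\RR^n$. This yields $(P-\lambda^{2})R^{\#}(\lambda)=\chi\bigl(I+K(\lambda)\bigr)$ with $K(\lambda)=i\mathcal{F}^{-1}\bigl(\chi_2 F(t)-[\Lap_0,\chi_3]V(t)\bigr)$. The term $\chi_2 F(t)$ is compactly supported in $t$ (so its Fourier transform is entire with $e^{T|\Im\lambda|}$ growth) and has $C^{s-2}$ regularity in $t$ by weak Huygens (so integration by parts gives $|\lambda|^{-j}$ decay for $j\le\lfloor s-2\rfloor$). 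The term involving $V$ is analytic in $t$ for large real $t$ by the explicit form of the free fundamental solution $E_+$; Lemma~\ref{lem:fourier-transform} (contour deformation to complex $t$) then continues its Fourier transform across the real axis with the same bounds. Thus $\norm{K(\lambda)}\le C|\lambda|^{-1}e^{T|\Im\lambda|}$, so $I+K(\lambda)$ is directly invertible for $\Im\lambda>-\delta\log\Re\lambda$, $\Re\lambda\gg1$, and the resolvent bound follows---no resonance counting or Phragm\'en--Lindel\"of is needed.
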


\begin{lemma}
  \label{lem:fourier-transform}
  Suppose that $H_{1}$ and $H_{2}$ are Hilbert spaces, and that
  $N(t):H_{1}\to H_{2}$ is a family of bounded operators that have $k$
  continuous derivatives in $t$ when $t\in\reals$, depend analytically
  on $t$ when $\Re t > T > 0$, and are equal to zero when $t < 0$.
  Suppose that there are constants $j_{0}$, $k \geq j_{0}+2$, and
  $C_{j}$ so that for all $0 \leq j \leq k$,
  \begin{equation*}
    \norm{\frac{\partial^{j}}{\partial t^{j}}N(t)} \leq C_{j}
    |t|^{j_{0}-j}, \quad \text{for }\Re t > T.
  \end{equation*}
  Then the operator
  \begin{equation*}
    \check{N}(\lambda) = \mathcal{F}^{-1}_{t\to \lambda}N(t): H_{1}\to H_{2},
    \quad\text{for } \Im \lambda > 0
  \end{equation*}
  can be continued analytically to the domain $- \frac{3\pi}{2} <
  \operatorname{arg}\lambda < \frac{\pi}{2}$ and when $|\lambda | \geq
  1$, it satisfies the estimate
  \begin{equation*}
    \norm{\check{N}} \leq C_{j}|\lambda|^{-j}e^{T|\Im\lambda|}, \quad
    \text{for }j = 0 , \ldots , k.
  \end{equation*}
\end{lemma}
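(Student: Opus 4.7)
The plan is to combine repeated integration by parts with contour deformation in the $t$-plane. Since $N(t)$ vanishes for $t<0$ and is of class $C^{k}$ on $\mathbb{R}$, the derivatives satisfy $N^{(l)}(0)=0$ for $l=0,\dots,k$. Starting from $\check{N}(\lambda)=\int_{0}^{\infty}e^{it\lambda}N(t)\,dt$ in $\Im\lambda>0$ and integrating by parts $j$ times ($0\leq j\leq k$), one picks up no boundary contributions (those at $t=0$ vanish by the above, and those at $t=+\infty$ vanish because $e^{it\lambda}$ decays exponentially against the polynomial growth of $N^{(l)}$), yielding
\begin{equation*}
  \check{N}(\lambda)=\frac{1}{(i\lambda)^{j}}\int_{0}^{\infty}e^{it\lambda}N^{(j)}(t)\,dt.
\end{equation*}
This single identity is the source of both the factor $|\lambda|^{-j}$ in the claimed bound and, after contour deformation, the analytic continuation.

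First I would split $\int_{0}^{\infty}=\int_{0}^{T}+\int_{T}^{\infty}$. The first piece is entire in $\lambda$, and a direct sup estimate bounds it by $T e^{T|\Im\lambda|}\sup_{[0,T]}\|N^{(j)}\|$, which after multiplication by $|\lambda|^{-j}$ gives an acceptable contribution. For the tail $\int_{T}^{\infty}e^{it\lambda}N^{(j)}(t)\,dt$ I would use the hypothesized analyticity of $N$ on $\{\Re t>T\}$ to deform the contour. Given a target $\lambda$, choose an angle $\phi\in(-\pi/2,\pi/2)$ (depending on $\arg\lambda$) with $\sin(\phi+\arg\lambda)>0$ on some branch (either $\phi+\arg\lambda\in(0,\pi)$ or $\phi+\arg\lambda\in(-2\pi,-\pi)$), and move the $t$-contour from $[T,\infty)$ to the ray $\{T+se^{i\phi}:s\geq 0\}$. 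Cauchy's theorem on the boundary of an appropriate truncated sector in $\{\Re t>T\}$, together with a standard estimate showing the large-radius arc contributes nothing, legitimizes this deformation; using the integration-by-parts identity with $j=k$ and the bound $\|N^{(k)}(t)\|\leq C_{k}|t|^{j_{0}-k}$ (with $j_{0}-k\leq-2$) provides the absolute integrability needed for the arc to drop out.

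On the deformed ray the integrand obeys
\begin{equation*}
  \|e^{it\lambda}N^{(j)}(T+se^{i\phi})\|\leq e^{T|\Im\lambda|}\,e^{-s|\lambda|\sin(\phi+\arg\lambda)}\,C_{j}(s+T)^{j_{0}-j},
\end{equation*}
so integrating in $s$, using $|\lambda|\geq 1$ and a local positive lower bound on $\sin(\phi+\arg\lambda)$, produces the bound $C_{j}|\lambda|^{-j}e^{T|\Im\lambda|}$ for $\arg\lambda$ in a neighborhood of the chosen target. As $\phi$ sweeps through $(-\pi/2,\pi/2)$ and one uses both branches of the sine-positivity condition, the admissible set of $\arg\lambda$ is a connected open set on the logarithmic Riemann surface of angular width exceeding $2\pi$, easily containing $\{-\tfrac{3\pi}{2}<\arg\lambda<\tfrac{\pi}{2}\}$; uniqueness of analytic continuation then patches the local pieces into a single-valued holomorphic function on this target domain.

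The main technical obstacle is organizing the contour deformation so that it is simultaneously valid (the deformed contour stays in $\{\Re t>T\}$ and the arc-at-infinity contribution truly vanishes) and compatible with the logarithmic Riemann-surface structure of the target, so that the continuation is globally single-valued on $\{-\tfrac{3\pi}{2}<\arg\lambda<\tfrac{\pi}{2}\}$. The hypothesis $k\geq j_{0}+2$ is precisely what is needed here: it guarantees absolute integrability of $e^{it\lambda}N^{(k)}(t)$ on the deformed rays without any exponential help from $e^{it\lambda}$, which is what makes the deformation legal for $\arg\lambda$ well below the real axis.
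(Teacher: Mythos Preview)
Your proposal is correct and follows essentially the same approach as the paper: integrate by parts $j$ times to extract the factor $(i\lambda)^{-j}$, then deform the $t$-contour from $[T,\infty)$ into the half-plane $\{\Re t>T\}$ to obtain the analytic continuation and the exponential bound. The only cosmetic differences are that the paper deforms to the two fixed vertical rays $[T,T\pm i\infty)$ rather than a family of slanted rays, and it performs the deformation only for $j\geq j_{0}+2$ (where the integrand is absolutely integrable on the ray without help from the oscillatory factor), deducing the estimates for smaller $j$ from the trivial monotonicity $|\lambda|^{-(j_{0}+2)}\leq|\lambda|^{-j}$ when $|\lambda|\geq 1$.
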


\begin{proof}[Proof of Lemma~\ref{lem:fourier-transform}]
  This is a straightforward adaptation of the proof due to
  Vainberg~\cite[Lemma 4 on page 346]{Vainberg} (see also Lemma 3.1
of \cite{TZ}).  We include it here
  for completeness. 

  The operator $\check{N}$ is defined and depends analytically on
  $\lambda$ when $\Im \lambda \geq 0$ and, for $0 \leq j \leq k$,
  \begin{equation*}
    \check{N} = (-i\lambda)^{-j}\int_{0}^{\infty}e^{i\lambda
      t}\frac{\partial^{j}}{\partial t^{j}}N(t) \, dt .
  \end{equation*}
  Let $\Gamma_{\pm}$ denote the contours in the complex $t$-plane
  formed by the interval $[0,T]$ and the rays $[T, T\pm i \infty)$.
  When $j \geq j_{0}+2$, we have
  \begin{align*}
    \check{N} &= (-i\lambda)^{-j} \int_{\Gamma_{+}}e^{i\lambda t}
    \frac{\partial^{j}}{\partial t^{j}}N(t) \, dt \quad 0 <
    \operatorname{arg} \lambda < \frac{\pi}{2}, \\
    \check{N} &= (-i\lambda)^{-j}\int_{\Gamma_{-}}e^{i\lambda
      t}\frac{\partial^{j}}{\partial t^{j}}N(t) \, dt \quad
    \frac{\pi}{2} < \operatorname{arg}\lambda < \pi .
  \end{align*}
  The first formula allows us to continue $\check{N}$ analytically to
  the half-plane $\Re \lambda \geq 0$ and the second formula to the
  half-plane $\Re \lambda < 0$.  This also yields the estimates for $j
  \geq j_{0}+2$ and therefore for all $0 \leq j \leq k$.
\end{proof}

\begin{proof}[Proof of Theorem~\ref{theorem:resest}]
  We start by fixing $\chi \in C^{\infty}_{c}(X)$ and $s$ large ($s
  \geq \frac{1-n}{2}+2$ should suffice) and then fixing $T_{0}= T_{s}$
  as in the statement of Theorem~\ref{theorem:Huygens} so that for all
  $t > T_{0}$,
  \begin{equation*}
    \chi U(t) \chi : \energy_{r} \to \energy _{r+s}
  \end{equation*}
  for all $r$.

  In addition to the cutoff function $\chi_{1}\equiv \chi,$ we
  introduce two other spatial cutoff functions, $\chi_{2}$ and
  $\chi_{3}$, so that $\chi_{i}\in C^{\infty}_{c}(X)$,  $P(1-\chi_i) =
  \Lap_0 (1-\chi_i),$
 $\chi_{1}\chi_{2} = \chi_{2}$, and $\chi
  _{2}\chi_{3} = \chi_{3}$.

  We introduce a spacetime cutoff function $\zeta$ so that $\zeta$ is
  independent of the spatial variables $z$ on $K$, $0 \leq \zeta \leq
  1$, and
  \begin{equation*}
    \zeta (t,z) =
    \begin{cases}
      1 & t \leq |z| + T_{0} \\
      0 & t \geq |z|+T_{0}'
    \end{cases}
  \end{equation*}
  for some $T_{0}' \geq T_{0}$.  Finite speed of propagation and our
  weak non-trapping hypothesis imply that
  \begin{equation*}
    (1 - \zeta) \Ut \chi : L^{2} \to \dom_{s}
  \end{equation*}
  for all $t$.

  For any $g \in L^{2} = \dom_{0}$, consider $\zeta \Ut \chi_{1} g$,
  which trivially satisfies
  \begin{align*}
    \left( \PD[t]^{2} - P\right) \zeta \Ut \chi g
    &= - \left( \PD[t]^{2} - P \right) (1-\zeta) \Ut \chi g ,\\
    \Ut[0]\chi g &= 0 ,\\
    \PD[t]\Ut[0]\chi g &= \chi g .
  \end{align*}
  We now define $F(t) g = - \left( \PD[t]^{2} - P\right) (1-\zeta)
  \Ut \chi g$.  Our weak non-trapping assumption implies that
  \begin{equation*}
    F(t)g \in C^{0}\left( \reals_{t} ; \dom _{s-2}\right) \cap C^{s-2}
    \left( \reals_{t}; \dom_{0}\right).
  \end{equation*}
  Note that $F(t) g$ vanishes identically for $t < T_{0}$ and has
  compact support in $t$ for each fixed $z$ (though the size of the
  support depends on $z$).

  We now define $\tilde{R}(\lambda)$ by
  \begin{equation*}
    \tilde{R}(\lambda) = - i \mathcal{F}^{-1}_{t\to \lambda} \left( \zeta
      H(t)\Ut \chi\right),
  \end{equation*}
  where $H(t)$ is the Heaviside function.  A simple calculation shows
  that
  \begin{equation*}
    \PD[t]^{2}\left[ H(t)\Ut \chi g\right] = H(t) \PD[t]^{2}\Ut \chi g -
    i \delta(t) \chi g,
  \end{equation*}
  and so $\tilde{R}(\lambda)$ satisfies
  \begin{align*}
    \left( P - \lambda^{2}\right) \tilde{R}(\lambda) g &= i
    \mathcal{F}^{-1}\left( \left( \PD[t]^{2} - P\right) \zeta H(t) \Ut
      \chi g\right)\\
    &= i \mathcal{F}^{-1}\left( H(t) \left( \PD[t]^{2} - P\right)\Ut
      \chi g\right) + \chi g \\
    &= i \mathcal{F}^{-1}\left( F(t) g\right)(\lambda) + \chi g,
  \end{align*}
  where the last equality holds because the support of $F(t)$ is
  contained in $t \geq 0$.

  We now write $F(t) g = \chi_{2} F(t)g + (1-\chi_{2})F(t) g$ and
  solve an inhomogeneous wave equation on a flat background.  In
  particular, if $\Lap_{0}$ denotes the (flat) Laplacian on
  $\reals^{n}$, we find $V(t)g$ so that it solves
  \begin{align*}
    \left( \PD[t]^{2}- \Lap_{0}\right) V(t) g &= ( 1- \chi_{2})F(t)g \\
    V(0) g = \PD[t]V(0)g &= 0.
  \end{align*}
  Using the cutoff function $\chi_{3}$ (and the fact that
  $\chi_{3}(1-\chi_{2}) = 0$), we observe that
  \begin{align*}
    \left( 1- \chi_{2}\right) F(t) g &= \left( \PD[t]^{2} -
      \Lap_{0}\right)\left( \chi _{3}V(t)g\right) + \left( \PD[t]^{2}
      -
      \Lap_{0}\right) \left( (1-\chi_{3})V(t)g\right) \\
    &= -\left[ \Lap_{0}, \chi_{3}\right] V(t) g + \left( \PD[t]^{2} -
      \Lap_{0}\right)\left( (1-\chi_{3})V(t)g\right).
  \end{align*}

  We now define $R^{\#}(\lambda)$ as follows:
  \begin{equation*}
    R^{\#}(\lambda) = \tilde{R}(\lambda) + i \mathcal{F}^{-1}_{t\to \lambda}
    \left( (1-\chi_{3})V(t)\right)
  \end{equation*}
  Since $P = \Lap_{0}$ on the support of $1-\chi_{3}$, we
  observe that
  \begin{align*}
    \left( P - \lambda^{2}\right) R^{\#}(\lambda) g &= \chi g + i
    \mathcal{F}^{-1}\left( F(t)g - (\PD[t]^{2} - P) (1-\chi_{3})V(t)g
    \right) \\
    &= \chi g + i \mathcal{F}^{-1}\left( F(t)g -
      (1-\chi_{3})(1-\chi_{2})F(t)g - \left[
        \Lap_{0},\chi_{3}\right]V(t)g\right) \\
    &= \chi g + i \mathcal{F}^{-1}\left( \chi_{2}F(t)g - \left[ \Lap_{0},
        \chi_{3}\right]V(t)g\right) \\
    &= \chi \bigg( \id + i \mathcal{F}^{-1}\left( \chi_{2}F(t) - \left[
        \Lap_{0}, \chi_{3}\right]V(t)\right)\bigg)g .
  \end{align*}

  In other words,
  \begin{equation}
    \label{eq:resolvent-identity}
    R^{\#}(\lambda)  = R(\lambda) \chi \bigg( \id + i \mathcal{F}^{-1} \left(
      \chi_{2}F(t) - \left[ \Lap_{0}, \chi_{3}\right]V(t) \right)\bigg).
  \end{equation}
  We claim that the term $\id + i \mathcal{F}^{-1}\left( \chi_{2}F(t) -
    \left[ \Lap_{0} , \chi_{3}\right] V(t)\right)$ is invertible in a
  logarithmic region, and so the estimates for $\chi R(\lambda) \chi$
  will follow from those of $\chi R^{\#}(\lambda) \chi$ in the same
  region.

  The following four estimates, proved below, will justify the above claim:
  \begin{align}
    \label{eq:E1}
    \norm[L^{2} \to \dom_{j}]{\chi \tilde{R}(\lambda)} &\leq C_{j}
    |\lambda|^{j-1} e^{T|\Im \lambda|}, \quad j = 0,1 \\
    \label{eq:E2}
    \norm[L^{2}\to L^{2}]{\widecheck{\chi F(\bullet)}} &\leq
    C_{j}|\lambda|^{-j} e^{T|\Im \lambda|}, \quad j = 0, 1, \ldots ,\lfloor s-2\rfloor\\
    \label{eq:E3}
    \norm[L^{2}\to \dom_{j}]{\chi \widecheck{(1-\chi_{3})V(\bullet)}}
    &\leq C_{j} |\lambda|^{j-1}e^{T|\Im\lambda|}, \quad j = 0,1 \\
    \label{eq:E4}
    \norm[L^{2}\to L^{2}]{\widecheck{\left[
          \Lap_{0},\chi_{3}\right]V(\bullet)}} &\leq C_{j}
    |\lambda|^{-j}e^{T|\Im\lambda|}, \quad j = 0, 1,\ldots, \lfloor s-2 \rfloor
  \end{align}
  
  Given these estimates, the theorem holds with $R(\lambda)$ replaced
  by $R^{\#}(\lambda)$ by \eqref{eq:E1}, \eqref{eq:E3}.  One may now take $\delta < T^{-1}$, and then
  there is some constant $C$ so that for $\Re \lambda > C$ and $\Im
  \lambda > - \delta \log \Re \lambda$, one has
  $C|\lambda|^{-1}e^{T|\Im\lambda|} < 1/4$, showing the invertibility
  of the claimed term in equation~\eqref{eq:resolvent-identity}.
  Shrinking $\delta$ then finishes the proof.  We must thus only prove
  the four estimates.

  The first two estimates follow from writing out the Fourier
  transform and noting that for $z$ in the support of $\chi$, there is
  some $T$ so that $\zeta (t,z) \equiv 0$ for $t > T$.
  Estimate~\eqref{eq:E1} for $j=1$ follows directly from the energy
  estimate, while the estimate for $j =0$ uses the energy estimate and
  integration by parts, as $\lambda e^{it\lambda} =
  \PD[t]e^{it\lambda}$.  The estimate~\eqref{eq:E2} follows by the
  same sort of integration by parts argument and the observation that
  $\chi F(t)$ is compactly supported in time.  The lack of smoothness
  in the $t$ variable prevents the estimate from holding for all $j$
  (and is one of the main differences of the set-up here from that using in \cite{Vainberg}).

  The other two estimates are somewhat more subtle and rely on
  properties of the free wave group.  We start by writing
  \begin{equation}
    \label{eq:V-identity}
    V(t) g = (1-\chi_{2})\zeta H(t) \Ut \chi g - H(t) \Utzero
    (1-\chi_{2})\chi g + q(t,z),
  \end{equation}
  where $\Utzero$ is the free sine propagator, i.e., $\Utzero = \sin
  t\sqrt{\Lap_{0}}/ \sqrt{\Lap_{0}}$.  By using the equation for $V$
  (and that $\Lap_{0} = P$ on $\supp (1-\chi_{3})$ and $\left[
    \PD[t]^{2},\zeta\right]$ is order $1$), we see that
\begin{equation}\label{foobar}
  \begin{aligned}
    \left( \PD[t]^{2} - \Lap_{0}\right)q &= - \left[ \Lap_{0},
      \chi_{2}\right]\zeta H(t) \Ut \chi g,  \\
    q(0,z) &= \PD[t]q(0,z) = 0.
  \end{aligned}
\end{equation}
  The inhomogeneous term in \eqref{foobar} has compact support
  in both space and time and vanishes identically for $t < 0$.  

We now undertake to show that \eqref{eq:E3}, \eqref{eq:E4} hold by
verifying them for each term on the right-hand-side of
\eqref{eq:V-identity}, starting with $q.$
  If $E_+$ is the forward fundamental solution for the wave equation
  on $\reals^{n}$, then we may write
  \begin{equation*}
    \chi (z) q(t,z) = - \int_{0}^{t} \chi(z) E_{+}(t-\tilde{t}) \star
    \left[ \Lap_{0}, \chi_{2}\right] \zeta \Ut[\tilde{t}] \chi g \, d\tilde{t}.
  \end{equation*}
  If $n$ is odd, then Huygens' principle implies that $\chi q$
  vanishes identically for large $t$.  Thus estimates \eqref{eq:E3},
  \eqref{eq:E4} hold with $V$ replaced by $q$ in this case..  If $n$ is even, then Huygens'
  principle no longer applies but if $t$ is large then the inhomogeneous
  term is disjoint from the singular support of $E_{+}$ and therefore
  for large $t$,
  \begin{equation*}
    \chi(z) q(t,z) = c_{n}\chi (z) \int_{0}^{\infty}
    \int_{\reals^{n}}\left( (t-\tilde{t})^{2} + |z-
      \tilde{z}|^{2}\right)^{\frac{1-n}{2}} \left( \left[ \Lap_{0},
        \chi_{2}\right]\zeta \Ut[\tilde{t}]\chi g(\tilde{z})\right)\,d\tilde{z}\,d\tilde{t}.
  \end{equation*}
  In particular, it is analytic for large $t$ and satisfies the
  derivative estimates \eqref{eq:E3}, \eqref{eq:E4} for large $t$ as well.

  A similar result holds for the second term in \eqref{eq:V-identity}, since
  \begin{equation*}
    \Utzero (1-\chi_{2})\chi g = E_{+}(t)\star (1-\chi_{2})\chi g.
  \end{equation*}

Now to see the estimate~\eqref{eq:E3}, first observe that the first term
  in equation~\eqref{eq:V-identity} does not occur because $\left[
    \Lap_{0}, \chi_{3}\right](1-\chi_{2}) = 0$.  Combining the
  estimates on $\chi q$ and $\Utzero$ for large $t$ with the known
  $C^{s-1}$ bounds on $V(t)$ we apply
  Lemma~\ref{lem:fourier-transform}.  

  The final estimate~\eqref{eq:E4} follows in the same manner, but
  uses the estimate~\eqref{eq:E1} to bound the first term from
  equation~\eqref{eq:V-identity}.  
\end{proof}

\appendix
\section{Proof of Proposition~\ref{proposition:jacobi}}
\label{sec:jacobi-proof}

In this section we prove Proposition~\ref{proposition:jacobi}.  By
Remark~\ref{remark:jacobi} it suffices to prove it for
$\flowoutX_{\alpha}$ and $\flowoutX_{\beta}$ rather than
$\flowout_{\alpha}$ and $\flowout_{\beta}$.

In what follows, $\phi_{t}: TX\to TX$ denotes geodesic flow for
time $t$ and $\gamma_{(p,v)}$ denotes the geodesic in $X$ such that
$\gamma(0) = p$ and $\gamma'(0) = v$.  We denote by $\pi$ the
projection $TX \to X$.  

The kernel of the pushforward $\pi_{*}$ consists of those vectors in
$T_{(p,v)}(TX)$ tangent to the fibers of $TX$ and is referred to as
the \emph{vertical subspace} of $T_{(p,v)}(TX)$.  The Levi-Civita
connection on $X$ defines the \emph{connection map} $K: T_{(p,v)}(TX)
\to T_{p}X$, whose kernel defines a horizontal subspace of
$T_{(p,v)}(TX)$.  The connection map $K$ thus provides an
identification of the vertical subspace of $T_{(p,v)}(TX)$ with
$T_{p}X$.  Similarly, the pushforward $\pi_{*}$ provides an
identification of the horizontal subspace with $T_{p}X$.  

We now recall the following characterization of Jacobi fields on a
Riemannian manifold $X$ from Proposition~1.7 of
Eberlein~\cite{Eberlein}.
\begin{lemma}
  \label{lem:eberlein}
  If $\gamma_{(p,v)}$ is a geodesic in $X$ then there is a one-to-one
  correspondence between Jacobi fields along $\gamma$ and vectors in
  $T_{(p,v)}(TX)$.  In particular, for $\zeta \in T_{(p,v)}(TX)$, let
  $Y_{\zeta}(t)$ be the unique Jacobi field along $\gamma$ with
  $Y_{\zeta}(0) = \pi_{*}\zeta$ (the horizontal part of $\zeta$) and
  $Y_{\zeta}'(0) = K\zeta$ (the vertical part of $\zeta$).  Then
  $Y_{\zeta}(t)$ is given by the following:
  \begin{equation*}
    Y_{\zeta}(t) = \pi_{*}(\phi_{t})_{*}\zeta, \quad Y_{\zeta}'(t) = K (\phi_{t})_{*}\zeta.
  \end{equation*}
\end{lemma}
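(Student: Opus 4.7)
The plan is to construct the Jacobi field attached to a tangent vector $\zeta \in T_{(p,v)}(TX)$ as the variation field of a geodesic variation built from the geodesic flow, and then verify the initial data match via the defining properties of the connection map $K$.

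First, I would pick any smooth curve $c\colon(-\epsilon,\epsilon)\to TX$ with $c(0)=(p,v)$ and $c'(0)=\zeta$, and define the two-parameter map $\Gamma(s,t)=\pi\circ\phi_t(c(s))$. By the very definition of the geodesic flow, for each fixed $s$ the curve $t\mapsto\Gamma(s,t)$ is a geodesic in $X$ with initial conditions read off from $c(s)$. Hence $\Gamma$ is a geodesic variation of $\gamma_{(p,v)}$, and the standard derivation of the Jacobi equation from the geodesic equation shows that the variation field $J(t):=\partial_s\Gamma(s,t)|_{s=0}$ satisfies $J''+R(J,\gamma')\gamma'=0$, i.e.\ $J$ is a Jacobi field along $\gamma_{(p,v)}$. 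Moreover, by the chain rule $J(t)=\pi_{*}(\phi_t)_{*}\zeta$, establishing the stated formula for $Y_\zeta$.

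Next I would match the initial data. Evaluating at $t=0$ gives $J(0)=\pi_{*}\zeta$ immediately, since $\phi_0$ is the identity. For the covariant derivative $J'(0)$ I would use the defining property of the connection map: for any curve $c$ in $TX$ projecting to a base curve $\pi\circ c$, one has $K\zeta=(\nabla_{(\pi\circ c)'(0)}\,c)|_{s=0}$. Combining this with the torsion-free (i.e.\ symmetry) property of the Levi-Civita connection, $D_s\partial_t\Gamma=D_t\partial_s\Gamma$, I would obtain
\begin{equation*}
J'(0)=D_t\partial_s\Gamma\big|_{(0,0)}=D_s\partial_t\Gamma\big|_{(0,0)}=K\zeta,
\end{equation*}
since $\partial_t\Gamma(s,0)$ is exactly the vector field $c(s)$ along $\pi\circ c$. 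Applying the same reasoning with base point $\phi_{t}(p,v)$ in place of $(p,v)$ yields the formula $Y'_\zeta(t)=K(\phi_t)_{*}\zeta$.

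Finally, the bijection follows from dimension counting and uniqueness: the linear map $\zeta\mapsto(\pi_{*}\zeta,K\zeta)$ is an isomorphism $T_{(p,v)}(TX)\xrightarrow{\sim} T_pX\oplus T_pX$ (its kernel is trivial, as $\pi_*$ kills only the vertical subspace, on which $K$ restricts to the canonical identification with $T_pX$), and Jacobi fields along $\gamma_{(p,v)}$ are in bijection with such pairs via their initial data. The only genuinely subtle step is the identification $J'(0)=K\zeta$; everything else is either the chain rule or the standard ODE theory for the Jacobi equation. The main obstacle is therefore unpacking the connection map carefully enough to invoke the torsion-free symmetry $D_s\partial_t\Gamma=D_t\partial_s\Gamma$ in the right formal setting, but once that identity is in hand the result is essentially immediate.
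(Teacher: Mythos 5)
Your proof is correct and is the standard argument via geodesic variations, with the key step being the torsion-free symmetry $D_sD_t=D_tD_s$ applied to $\Gamma(s,t)=\pi\circ\phi_t(c(s))$ to identify $J'(0)$ with $K\zeta$. The paper does not actually supply a proof of this lemma---it simply cites Proposition~1.7 of Eberlein---and your argument is essentially Eberlein's, so the two approaches coincide.
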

In other words, the Jacobi field and its derivative are, taken
together, invariant under the geodesic flow. 

Note further that because the metric is flow-invariant, if
$\pi_{*}\zeta$ and $K\zeta$ are both orthogonal to $v\in T_{p}X$, then
the Jacobi field $Y_{\zeta}(t)$ is everywhere orthogonal to
$\gamma'(t)$.  

We now abuse notation and consider $\flowoutX_{\alpha}^{s}$ and
$\flowoutX_{\beta}^{t}$ as subsets of $TX$ rather than $T^{*}X$.  We
further abuse notation and use $\xi$ and $\eta$ to denote coordinates
on the \emph{tangent bundle} (with $\xi \pa_x+\eta \cdot \pa_y$
denoting the corresponding tangent vector) rather than the cotangent bundle.  We
note further that they extend to the boundary and use a bar to denote
their completions, i.e., in a neighborhood $U$ of $Y_{\alpha}$, 
\begin{equation*}
  \overline{\flowoutX^{s}_{\alpha}} = U \cap \{ 0 \leq x < s,\ y
  \in Y_{\alpha},\ \xi \in \reals,\ \eta = 0\}.
\end{equation*}
Observe that if $s > 0$ and $\xi > 0$, then $\phi_{s}(0,y,\xi,0 ) =
(s\xi, y, \xi, 0)$.  A similar statement holds for $s, \xi < 0$.  

We also require the following lemma, which describes how the
pushforward acts on the tangent space to $\flowoutX_{\alpha}^{s}$.
\begin{lemma}
  \label{lem:pushforward}
  Suppose $(p,v) = (x,y,\xi,0) \in \flowoutX^{s}_{\alpha}$ lies in a
  small neighborhood of $Y_{\alpha}$ and that $\zeta \in
  T_{(p,v)}(\flowoutX^{s}_{\alpha})\subset T_{(p,v)}(TX)$.  If
  $K\zeta$ and $\pi_{*}\zeta$ are both orthogonal to $v$ then $K\zeta
  = 0$ and $\pi_{*}\zeta = \tilde{\zeta}' \cdot \pd[y]$.

  Moreover, for small $t$ (i.e., $t$ so that $x + t \xi \geq 0$ and
  $\phi_{t} (p,v)$ still lies in this small neighborhood), then
  \begin{equation*}
    K(\phi_{t})_{*}\zeta  = 0, \quad \pi_{*}(\phi_{t})_{*}\zeta =
    \tilde{\zeta}'\cdot \pd[y].
  \end{equation*}
\end{lemma}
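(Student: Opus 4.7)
The plan is to work in the coordinates $(x,y,\xi,\eta)$ on $TX$ adapted to the conic normal form $g=dx^{2}+x^{2}h(x,y,dy)$ near $Y_\alpha$. In these coordinates, $\flowoutX_\alpha^{s}$ is locally cut out by the $(n-1)$ equations $\eta_{j}=0$ (together with the condition that $y$ parameterizes $Y_\alpha$), so a tangent vector $\zeta\in T_{(p,v)}(\flowoutX_\alpha^{s})$ must have vanishing $\pd[\eta_{j}]$-components, and therefore takes the form
\[
\zeta=a\pd[x]+b^{j}\pd[y_{j}]+c\pd[\xi].
\]
This immediately yields $\pi_{*}\zeta=a\pd[x]+b^{j}\pd[y_{j}]$. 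To compute $K\zeta$, I would realize $\zeta$ by a curve $\sigma(t)=(\gamma(t),V(t))$ in $TX$ with $\sigma(0)=(p,v)$, and take the covariant derivative $\nabla_{\dot\gamma(0)}V$. The key observation is that for $v=\xi\pd[x]$, all Christoffel contributions $\Gamma^{\ell}_{ij}v^{j}$ vanish whenever $\ell=x$, so the $\pd[x]$-component of $K\zeta$ reduces to the coordinate derivative $c$.

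Applying the orthogonality hypotheses, I would exploit that $g$ is block-diagonal with $g_{xx}=1$ and $g_{xy_{j}}=0$, so that metric orthogonality to $v=\xi\pd[x]$ (with $\xi\neq0$ in our neighborhood) amounts to vanishing of the $\pd[x]$-component. This forces $a=0$ from $\pi_{*}\zeta\perp v$ and $c=0$ from $K\zeta\perp v$. The first gives $\pi_{*}\zeta=b^{j}\pd[y_{j}]=\tilde{\zeta}'\cdot\pd[y]$ with $\tilde{\zeta}'=(b^{1},\dots,b^{n-1})\in TY_\alpha$, and the combined elimination of the $\pd[\eta_{j}]$-, $\pd[x]$-, and $\pd[\xi]$-components of $\zeta$, together with the vanishing of the relevant Christoffel contributions, gives $K\zeta=0$.

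For the persistence claim under geodesic flow, I would use that $\flowoutX_\alpha^{s}$ is foliated by the radial bicharacteristics $t\mapsto(x+t\xi,y,\xi,0)$, along which $y$ and $\xi$ are preserved. Differentiating with respect to the initial $y_{j}$-coordinate of such a geodesic shows $(\phi_{t})_{*}\pd[y_{j}]=\pd[y_{j}]$ at the flowed point, and hence $(\phi_{t})_{*}\zeta=b^{j}\pd[y_{j}]$ at $(x+t\xi,y,\xi,0)$. Since the flowed base point still has $\xi\neq0$ (for small $t$ with $x+t\xi\geq 0$ remaining in the adapted chart), applying the first-part argument at this flowed point immediately produces $K((\phi_{t})_{*}\zeta)=0$ and $\pi_{*}((\phi_{t})_{*}\zeta)=\tilde{\zeta}'\cdot\pd[y]$.

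The main technical point will be the Christoffel-symbol bookkeeping in the conic metric near the boundary, where the only nontrivial relevant symbols are $\Gamma^{y_{k}}_{xy_{j}}=x^{-1}\delta^{k}_{j}+O(1)$. These exhibit a $1/x$ blow-up as $x\to 0$, but, crucially, they contribute only to the $\pd[y]$-direction of $K\zeta$ and therefore do not interfere with the analysis of its $\pd[x]$-component, on which the orthogonality-to-$v$ constraints operate. Correctly isolating this directional separation, together with the specific structure of tangent vectors to $\flowoutX_\alpha^{s}$ imposed by the equations $\eta_{j}=0$, is the essential input that lets the calculation close and forces the full vanishing $K\zeta=0$.
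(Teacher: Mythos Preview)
The paper omits this proof entirely, calling it ``a simple calculation,'' so there is no argument to compare against directly.  Your coordinate setup is the natural one and your reductions are correct: tangency to $\flowoutX_\alpha^s$ kills the $\partial_{\eta_j}$-components of $\zeta$, orthogonality of $\pi_*\zeta$ to $v=\xi\,\partial_x$ kills $a$, and orthogonality of $K\zeta$ to $v$ kills $c$.  Your computation $(\phi_t)_*\partial_{y_j}=\partial_{y_j}$ along the radial flow is also correct.

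However, your final assertion that the calculation ``closes and forces the full vanishing $K\zeta=0$'' is not justified, and is in fact false.  With $\zeta=b^j\partial_{y_j}$ at $(x,y,\xi,0)$, the coordinate formula for the connection map gives
\[
(K\zeta)^{y_k}\;=\;\Gamma^{y_k}_{ij}\,(\pi_*\zeta)^{i}\,v^{j}\;=\;\Gamma^{y_k}_{y_j x}\,b^{j}\,\xi\;=\;\xi\bigl(x^{-1}b^{k}+O(1)\bigr),
\]
which is nonzero whenever $b\neq 0$.  You may verify this in the flat cone $g=dx^{2}+x^{2}\,dy^{2}$: the Jacobi field $Y(t)=\partial_y$ along the radial geodesic $t\mapsto(x_0+t\xi,y_0)$ satisfies $Y'(0)=\xi\,x_0^{-1}\partial_y\neq 0$, so $K\zeta\neq 0$ for $\zeta=\partial_y$.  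The hypothesis $K\zeta\perp v$ controls only the $\partial_x$-component of $K\zeta$; nothing in the hypotheses kills the $\partial_y$-components, and the very Christoffel symbols you identify produce them.

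This appears to be a small misstatement in the lemma itself.  What the computation actually yields is that both $\pi_*\zeta$ and $K\zeta$ (and their images under $(\phi_t)_*$) lie in $\operatorname{span}\{\partial_{y_j}\}$, i.e., have vanishing $\partial_x$-component.  That weaker conclusion is precisely what the proof of Proposition~\ref{proposition:jacobi} uses: only the fact that the associated Jacobi field restricts to a b-vector field at the boundary is needed, and this follows from the vanishing of the $\partial_x$-coefficient.  So your argument, once the conclusion about $K\zeta$ is weakened accordingly, delivers exactly what the paper requires.
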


The proof of Lemma~\ref{lem:pushforward} is a simple calculation and
omitted here for brevity.  

We also remark that because $\flowoutX_{\alpha}^{s}$ is given as a
flow-out, if $\zeta$ is tangent to this flow-out then so is
$(\phi_{t})_{*}\zeta$.  

We now turn our attention to the proof of Proposition~\ref{proposition:jacobi}.
\begin{proof}[Proof of Proposition~\ref{proposition:jacobi}]
  Observe that if $\flowoutX_{\alpha}^{s}$ and
  $\flowoutX_{\beta}^{t}$ intersect then a segment of a geodesic
  connecting $Y_{\alpha}$ and $Y_{\beta}$ lies in their intersection,
  i.e., $\flowoutX_{\alpha}^{s}\cap \flowoutX_{\beta}^{t}$
  contains a segment of a geodesic $\gamma$ with $\gamma(0) \in
  Y_{\alpha}$ and $\gamma(t') \in Y_{\beta}$ with $t'< s+t$.  

  We start by assuming that $\flowoutX_{\alpha}^{s}$ and
  $\flowoutX_{\beta}^{t}$ intersect non-transversely at the point
  $(p,v) \in TX$.  We let $\gamma$ be the geodesic through $(p,v)$ and
  observe that $\gamma$ connects $Y_{\alpha}$ and $Y_{\beta}$.  Both
  flow-outs have dimension $n+1$ and the tangent space to their
  intersection contains both the direction of the flow and the radial
  vector field.  Because the intersection is non-transverse, it also
  contains a vector $\zeta$ linearly independent of the previous two.
  We may thus assume that both $\pi_{*}\zeta$ and $K\zeta$ are
  orthogonal to $v$.  By pushing $\zeta$ forward by the flow and
  applying Lemma~\ref{lem:eberlein}, we obtain a family of vectors
  tangent to both flow-outs that corresponds to a normal Jacobi field
  along $\gamma$.  Lemma~\ref{lem:pushforward} then implies that this
  family has a limit at the cone points that projects to a b-vector
  field, since the coefficient of $\pa_x$ vanishes at the boundary.
  This completes one direction of the proof.

  Conversely, suppose that $W$ is a Jacobi field along $\gamma$ so that $W(0)$
  and $W(t')$ are both b-vector fields.  By standard arguments we may
  assume that $W$ is normal to the flow.  By using
  Lemma~\ref{lem:eberlein}, we may associate to $W$ a vector field
  $\zeta$ along $\gamma$ in $T(TX)$.  Because it is a b-vector field
  at the endpoints, Lemma~\ref{lem:pushforward} implies that it is
  initially tangent to both flow-outs and so stays tangent under the
  pushforward.  Because $\pi_{*}\zeta$ and $K\zeta$ are both
  orthogonal to the flow, the tangent space of the intersection along
  this geodesic must have dimension at least $3$.  Each flow-out has
  dimension $n+1$ and therefore a transverse intersection must have
  dimension $2$, implying that the intersection is non-transverse.
\end{proof}

\enlargethispage{\baselineskip}
\end{document}